\newtheorem{theorem}{Theorem}
\newtheorem{lemma}[theorem]{Lemma}
\newtheorem{corollary}[theorem]{Corollary}
\newtheorem{proposition}[theorem]{Proposition}
\newtheorem{definition}[theorem]{Definition}
\newtheorem{question}[theorem]{Question}
\newtheorem{example}[theorem]{Example}
\newtheorem{convention}[theorem]{Convention}
\renewcommand{\r}{\mathrm}
\newcommand{\N}{\mathbb{N}}
\newcommand{\Z}{\mathbb{Z}}
\newcommand{\Q}{\mathbb{Q}}
\newcommand{\R}{\mathbb{R}}
\newcommand{\md}{_\mathrm{md}}
\newcommand{\gp}{_\mathrm{gp}}
\newcommand{\lang}{\begin{picture}(5,7)
\put(1.2,2.5){\rotatebox{45}{\line(1,0){6.0}}}
\put(1.2,2.5){\rotatebox{315}{\line(1,0){6.0}}}
\end{picture}\kern.16em}
\newcommand{\rang}{\kern.1em\begin{picture}(5,7)
\put(.1,2.5){\rotatebox{135}{\line(1,0){6.0}}}
\put(.1,2.5){\rotatebox{225}{\line(1,0){6.0}}}
\end{picture}}
\newcommand{\langlang}{\lang\kern-.3em\lang}
\newcommand{\rangrang}{\rang\kern-.24em\rang}
\newcommand{\smallcoprod}{\begin{picture}(10,6)
\put(1.8,.6){\line(1,0){6.6}}
\thicklines
\put(3.6,.6){\line(0,1){6}}
\put(6.6,.6){\line(0,1){6}}
\end{picture}}
\begin{document}

\title{On monoids, $\!2\!$-firs, and semifirs}
\thanks{After publication of this note, updates, errata,
related references etc., if found, will be recorded at
\url{http://math.berkeley.edu/~gbergman/papers}.
}

\subjclass[2010]{Primary: 16E60, 16S36, 16W50, 20M10.}
\keywords{monoid ring, semifir, $\!2\!$-fir.}

\author{George M. Bergman}
\address{University of California\\
Berkeley, CA 94720-3840, USA}
\email{gbergman@math.berkeley.edu}

\begin{abstract}
Several authors have studied the question of when the
monoid ring $DM$ of a monoid $M$ over a ring $D$ is a right and/or
left fir (free ideal ring), a semifir, or a $\!2\!$-fir (definitions
recalled in~\S\ref{S.Def}).
It is known that for $M$ nontrivial, a necessary condition for
any of these properties to hold is that $D$ be a division ring.
Under that assumption, necessary and sufficient conditions on $M$
are known for $DM$ to be a right or left fir, and various
conditions on $M$ have been proved necessary {\em or} sufficient for
$DM$ to be a $\!2\!$-fir or semifir.

A sufficient condition for $DM$ to be a semifir is that
$M$ be a direct limit of monoids which are free products
of free monoids and free groups.
Warren Dicks has conjectured that this is also necessary.
However  F.\ Ced\'{o} has given an example of a monoid $M$ which
is not such a direct limit, but
satisfies all the known necessary conditions for $DM$ to be a semifir.
It is an open question whether for this $M,$ the rings $DM$
are semifirs.

We note here some reformulations of the known necessary conditions
for a monoid ring $DM$ to be a $\!2\!$-fir or a semifir,
motivate Ced\'{o}'s construction and a variant
thereof, and recover Ced\'{o}'s results for both constructions.

Any homomorphism from a monoid $M$ into $\Z$ induces a
$\!\Z\!$-grading on $DM,$
and we show that for the two monoids just mentioned,
the rings $DM$ are ``homogeneous semifirs'' with respect to all such
nontrivial $\!\Z\!$-gradings; i.e., have (roughly) the property that
every finitely generated homogeneous one-sided ideal is free
of unique rank.

If $M$ is a monoid such that $DM$ is an $\!n\!$-fir,
and $N$ a ``well-behaved'' submonoid of $M,$ we
prove some properties of the ring $DN.$
Using these, we show that for $M$ a monoid such that $DM$
is a $\!2\!$-fir, mutual commutativity is an equivalence
relation on nonidentity elements of $M,$ and each equivalence
class, together with the identity element, is a directed union
of infinite cyclic groups or of infinite cyclic monoids.

Several open questions are noted.
\end{abstract}
\maketitle

\section{Definitions, and overview}\label{S.Def}

Rings are here associative and unital.

We recall that a ring $R$ is called a {\em free ideal ring},
or {\em fir}, if all left ideals and all right
ideals of $R$ are free as $\!R\!$-modules, and free
$\!R\!$-modules of distinct ranks are non-isomorphic.
The original motivating examples are the free associative algebras
$k\lang X\rang$ over fields $k.$
Many related constructions
are known to have the same property, for instance,
the completions $k\langlang X\rangrang$ of these algebras,
group algebras of free groups, and more generally,
monoid algebras of coproducts (a term I prefer to
``free product'') of free groups and free monoids, and the
generalizations of these examples with a division ring $D$ in place
of the field $k.$
The firs also include the classical principal
ideal domains, and various noncommutative generalizations of these,
e.g., Ore polynomial rings over division rings.
For much of what is known about these classes of rings, see \cite{FRR+}.

For any positive integer $n,$ a ring $R$ such that
every left ideal generated by $\leq n$ elements is free,
and free modules of ranks $\leq n$ have unique rank,
is called an {\em $\!n\!$-fir}\/; this
condition turns out to be left-right symmetric.
A ring which is an $\!n\!$-fir for all positive integers $n$ is
called a {\em semifir}.

For any $n,$ the class of $\!n\!$-firs is closed under
taking direct limits; hence so is the class of semifirs.
It follows that if $M$ is a direct limit of monoids
each of which is a coproduct of a free group and a free monoid, and
$D$ is a division ring, then $DM$ is a semifir.
These are the only examples of monoid rings of nontrivial
monoids that are known to be semifirs, and Dicks has
conjectured that they are the only examples
\cite[paragraph before Theorem~4.4]{WD+AS}.

Various conditions on a monoid $M$ have been shown necessary
for a monoid ring $DM$ to be a semifir.
In \S\ref{S.conditions} we recall these, and note reformulations
of some of them.
Ferran Ced\'{o} made the most recent addition to this
list of conditions in \cite{FC}.
He gives there examples of monoids satisfying
the previously known conditions but not his new ones,
thus showing that the previous conditions were not sufficient.
He then describes a monoid which satisfies all the known necessary
conditions, including his, but which is not a direct limit
of coproducts of free groups and free monoids.
It is not known whether this $M$ has monoid rings
$DM$ which are semifirs; if so, it would be a
counterexample to Dicks' conjecture.
In \S\ref{S.C_eg_props} below, we show that for this $M,$ the rings $DM$
do satisfy a graded version of the semifir condition.

\section{Conditions on a monoid $M$}\label{S.conditions}

Necessary and sufficient conditions
are known for a monoid ring to be a left or
right fir (\cite{RWW}, \cite{IK}, cf.\ \cite{FC+AP});
so it is only for the semifir condition, and the
weaker $\!n\!$-fir conditions, that such questions are open.
Moreover, $\!1\!$-firs are simply the rings without zero
divisors, a condition of a different flavor from the $\!n\!$-fir
conditions for higher $n,$ so this note will in general only consider
conditions for a monoid ring to be an $\!n\!$-fir when $n\geq 2.$
Finally, it is known that a monoid ring $RM$ where $M\neq\{1\}$
cannot be a $\!2\!$-fir unless $R$ is a division ring, so
(with a brief exception in \S\ref{S.cm})
we will not discuss monoid rings over non-division-rings.
We therefore make

\begin{convention}\label{Cv.D}
For the remainder of this note, $D$ will represent an
arbitrary division ring.
Thus, statements we make which refer to $D$ will be understood to be
asserted for all division rings $D.$

If $M$ is a monoid, then $DM$ will denote the monoid ring of $M$
with coefficients in $D,$ i.e., the ring of formal finite
linear combinations of elements of $M$ with coefficients
in $D,$ with the obvious addition, and with multiplication
defined so that elements of $D$ commute with elements of $M,$
while the \textup{(}generally noncommutative\textup{)}
internal multiplicative structures of $M$ and $D$
are retained.
\end{convention}

For $M$ a monoid, its {\em universal group} is the
group obtained by universally adjoining inverses to all elements of $M.$
A monoid need not embed in its universal group;
easy counterexamples are monoids with instances of non-cancellation,
$a\,b=a\,b'$ or $b\,c=b'\,c$ with $b\neq b'.$
(Right and left cancellation are, in fact, the simplest of an infinite
family of conditions, obtained by A.\,I.\,Mal'cev, which together are
necessary and sufficient for a monoid to be embeddable in a group.
See \cite{AIMSg1} and \cite{AIMSg2}, or the exposition in
\cite[\S VII.3]{PMC.UA}.)

It is not hard to show that a group $G$ is a direct limit of free
groups if and only if every finitely generated subgroup of $G$ is free;
such a $G$ is called {\em locally free}.
Dicks and Schofield \cite[Theorem~4.4, and the second sentence of the
proof of that theorem]{WD+AS} obtain the following strong results
for any monoid $M$ such that $DM$ is a semifir.
\begin{equation}\begin{minipage}[c]{35pc}\label{d.G_l_f}
The universal group of $M$ is locally free.
\end{minipage}\end{equation}
\begin{equation}\begin{minipage}[c]{35pc}\label{d.M<G}
$M$ embeds in its universal group.
\end{minipage}\end{equation}

The condition that a ring $R$ be an $\!n\!$-fir can be expressed
as saying that every $\!n\!$-term relation $\sum_1^n a_i\,b_i=0$
holding with $a_i,\,b_i\in R$ can be (in a sense that will be recalled
in \S\ref{S.semifir}) ``trivialized''.
In monoid rings $DM,$ the easy examples of such relations
are $\!2\!$-term relations, arising in
one way or another from equalities between two products in $M.$
Hence it is not surprising that most known necessary
conditions for $DM$ to be a semifir are consequences of the
$\!2\!$-fir condition.
Though~\eqref{d.G_l_f} and~\eqref{d.M<G} are not so obtained,
I don't know any cases where $DM$
is a $\!2\!$-fir but does not satisfy~\eqref{d.G_l_f} and~\eqref{d.M<G}.
Indeed, I do not know the answer to

\begin{question}\label{Q.2=>semi}
If $M$ is a monoid such that $DM$ is a $\!2\!$-fir,
must $DM$ be a semifir?
\end{question}

For commutative rings, the conditions of being a
$\!2\!$-fir and of being semifir are equivalent.
Indeed, given an ideal $I$ generated by $n>2$ elements $a_1,\dots,a_n,$
not all zero, in a commutative
$\!2\!$-fir, we note that the ideal generated by $a_1$ and $a_2$
must be free, hence since a commutative ring has no ideals free on
more than one generator, must be generated by a single element $a,$
so $I$ is generated by the $n-1$ elements $a,\,a_3,\dots,a_n;$ and
repeating this reduction, we find that it is free on one generator;
moreover, over commutative rings, all free modules have unique rank.
The same argument works over any right or left Ore ring;
so it is only among non-Ore rings that the distinctions among the
$\!n\!$-fir conditions for different values of $n>1$ arise.

It is shown in \cite{WD+PM} that if $G$ is a group such
that if $DG$ is an $\!n\!$-fir, then $G$ must have the
property that every $\!n\!$-generator subgroup is free.
If the converse were known to be true, then the fact that there are
groups having this property for $n=2$ but not for larger $n$ \cite{A+O},
\cite{B+S}, \cite[Examples on p.\,289]{WD+PM}
would give a negative answer to Question~\ref{Q.2=>semi}.
But though many examples are known of rings that
are $\!n\!$-firs but not $\!n{+}1\!$-firs
(e.g., \cite[Proposition~4.2]{PMC.depII},
\cite[Theorems~6.1 and 6.2]{cPu}), I am aware of none that are
group rings or monoid rings.

Nevertheless, let us hedge our bets, and supplement
Question~\ref{Q.2=>semi} with a weaker version, which
might have a positive answer if that question does not.

\begin{question}\label{Q.2=>semi_if}
If $M$ is a monoid satisfying~\eqref{d.G_l_f}
and~\eqref{d.M<G}, such that $DM$
is a $\!2\!$-fir, must $DM$ be a semifir?
\end{question}

We shall now recall some conditions that have been shown necessary for
$DM$ to be a $\!2\!$-fir.
When we give a condition in the form
of an implication, e.g., ``$(ab=ac)\implies(b=c)$'',
this will be understood to be quantified universally
over the elements of $M$ referred to.

Menal \cite{PM} proves that if $DM$ is a $\!2\!$-fir,
then $M$ satisfies the next three
conditions, of which the first two are together
expressed by saying $M$ is ``rigid''.
\begin{equation}\begin{minipage}[c]{35pc}\label{d.cancel}
$M$ is cancellative; that is,
$(ab=ac)\implies(b=c)\Longleftarrow(bd=cd).$
\end{minipage}\end{equation}
\begin{equation}\begin{minipage}[c]{35pc}\label{d.cap}
$aM\cap bM\neq\emptyset\implies aM\subseteq bM$ or $aM\supseteq bM.$
\end{minipage}\end{equation}
\begin{equation}\begin{minipage}[c]{35pc}\label{d.a=cad>}
If $a=cad,$ and $a$ is not invertible, then $c=d=1.$
\end{minipage}\end{equation}

Note that in the presence of~\eqref{d.cancel}, condition~\eqref{d.cap}
is left-right symmetric: it says that any relation
$ac=bd$ holding in $M$ can be refined either as $(bf)c=b(fc)$
(i.e., with $a$ having the form $bf,$ so that
by cancellation, $d=fc),$ or as $a(ed)=(ae)d,$ in $M.$

It is known that~\eqref{d.cancel} and~\eqref{d.cap} together
imply~\eqref{d.M<G}.
(Indeed, a somewhat stronger statement is proved in \cite{RD}
-- stronger because it assumes a weaker, but more complicated,
hypothesis than~\eqref{d.cap}.
A proof of the implication as just stated is given
in~\cite[Theorem~0.7.9]{FRR+}.)
Hence condition~\eqref{d.M<G}, stated above as necessary
for $DM$ to be a semifir, is in fact necessary for it
to be a $\!2\!$-fir (and hence can
be dropped from Question~\ref{Q.2=>semi_if}).

To the necessary conditions~\eqref{d.cancel}-\eqref{d.a=cad>}
for $DM$ to be a $\!2\!$-fir,
Ced\'{o} \cite[Proposition~2]{FC} adds the next two.
\begin{equation}\begin{minipage}[c]{35pc}\label{d.ab=cad>}
If $ab=cad,$ then either $c=1,$ or there exists $b'$ such that
$ab'=c^n$ for some nonnegative integer~$n.$
\end{minipage}\end{equation}
\begin{equation}\tag{\ref{d.ab=cad>}$'$}\begin{minipage}[c]{35pc}\label{d.ba=cad>}
If $ba=cad,$ then either $d=1,$ or there exists $b''$ such that
$b''a=d^n$ for some nonnegative integer $n.$
\end{minipage}\end{equation}

These conditions have some known reformulations, which we will now
develop.

Let us begin by looking at the case of~\eqref{d.ab=cad>} where $d=1.$
It is not hard to get from this (we shall do so in the proof of
Lemma~\ref{L.ab=ca} below) a more detailed statement:
\begin{equation}\begin{minipage}[c]{35pc}\label{d.ab=ca>}
If $ab=ca,$ and $b,$ $c$ are not both $1,$ then there exist
$e,f\in M$ and a nonnegative integer $n$ such that $a=(ef)^n e,$
$b=fe,$ and $c=ef.$
\end{minipage}\end{equation}

For some insight into this condition, let us see what we
can deduce from a relation $ab=ca$ using only~\eqref{d.cancel}
and~\eqref{d.cap}.
By~\eqref{d.cap}, one of $a$ and $c$ is a left divisor of the other.
If $a$ is a left divisor of $c,$ we can write $c=af,$
whence $b=fa,$ and on setting $e=a,$ we find that we have
the $n=0$ case of the conclusion of~\eqref{d.ab=ca>}.
This leaves the case where $c$ is a left divisor of $a;$ say $a=ca'.$
In that case, substituting into the given equation and
cancelling $c$ on the left, we get $a'b=ca',$ which, of
course, again leads to two possibilities.
In one, $c=a'f,$ and on taking $e=a',$ we see that $c=ef,$
so $a=ca'=efe,$ and we have the $n=1$ case of the conclusion
of~\eqref{d.ab=ca>}.
In the other case, writing $a'=ca'',$ we get a new equation $a''b=ca''.$
And so on.
So what condition~\eqref{d.ab=ca>} says is that this process
terminates after finitely many steps; in other words, that $a$
is not left divisible by arbitrarily large powers of $c$ --
except (if one examines the argument) when
$a,$ $b$ and $c$ are all invertible, in which case one has, at
every step, a choice whether to terminate the process or continue it.
Let us exclude this case by requiring $a$ to be noninvertible.
Thus, the idea of~\eqref{d.ab=ca>} seems to be the condition:
\begin{equation}\tag{\ref{d.ab=ca>}$'$}\begin{minipage}[c]{35pc}\label{d.ab=ca>'}
If $ab=ca,$ where $c\neq 1$ and $a$ is not invertible, then $a$ is not
left divisible in $M$ by all positive integer powers of $c.$
\end{minipage}\end{equation}

This has, of course, a dual:
\begin{equation}\tag{\ref{d.ab=ca>}$''$}\begin{minipage}[c]{35pc}\label{d.ab=ca>''}
If $ab=ca,$ where $b\neq 1$ and $a$ is not invertible, then $a$ is not
right divisible in $M$ by all positive integer powers of $b.$
\end{minipage}\end{equation}

The reader might find it helpful, at this point,
to jot down on a piece of paper the statements of
the numbered conditions
given so far,~\eqref{d.G_l_f}-\eqref{d.ab=ca>''},
for easy reference as he or she reads further;
I will refer to them frequently.
(Subsequent numbered displays, in contrast, will mostly
be referenced only near where they appear.)

The next result establishes the relations among the last few conditions
discussed.

\begin{lemma}[{\cite[Lemma~2]{RA+FC}}]\label{L.ab=ca}
If $M$ is a monoid satisfying~\eqref{d.cancel}
and\textup{~\eqref{d.cap}}, then\\[.4em]
\hspace*{17em}$\eqref{d.ab=ca>'}\iff
\eqref{d.ab=ca>}\iff
\eqref{d.ab=ca>''},$\\
and\\
\hspace*{15.8em}$\eqref{d.ab=cad>}\iff
(\eqref{d.a=cad>}\wedge\eqref{d.ab=ca>})\iff
\eqref{d.ba=cad>}.$
\end{lemma}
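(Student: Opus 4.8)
The plan is to prove just the two equivalences
$\eqref{d.ab=ca>'}\Leftrightarrow\eqref{d.ab=ca>}$ and
$\eqref{d.ab=cad>}\Leftrightarrow(\eqref{d.a=cad>}\wedge\eqref{d.ab=ca>})$
directly, and to read off the remaining ones by left--right duality.
Indeed, under \eqref{d.cancel} the condition \eqref{d.cap} is left--right
symmetric (as the excerpt notes), \eqref{d.a=cad>} is visibly self-dual, and
so is \eqref{d.ab=ca>}: writing its conclusion as $a=(ef)^n e=e(fe)^n$,
$b=fe$, $c=ef$ shows it is unchanged when $M$ is replaced by $M^{\mathrm{op}}$
(with the roles of $b$ and $c$ interchanged), while \eqref{d.ab=ca>''} and
\eqref{d.ba=cad>} are the left--right duals of \eqref{d.ab=ca>'} and
\eqref{d.ab=cad>}. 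So once the two displayed equivalences are known for every
$M$ satisfying \eqref{d.cancel} and \eqref{d.cap}, applying them to
$M^{\mathrm{op}}$ yields the rest.

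For $\eqref{d.ab=ca>}\Rightarrow\eqref{d.ab=ca>'}$: given $ab=ca$ with
$c\neq1$ and $a$ noninvertible, \eqref{d.ab=ca>} gives $e,f$ and $n$ with
$a=(ef)^n e$, $b=fe$, $c=ef$. If $a$ were left-divisible by $c^{n+2}$, say
$(ef)^n e=(ef)^{n+2}w$, then cancelling $(ef)^n$ and then $e$ gives
$1=fefw$; since a cancellative monoid has no proper one-sided units, $f$ is a
unit, hence so is $e$, hence so are $c=ef$ and $a=c^n e$ --- contradicting
noninvertibility of $a$. For the converse I would run the descent sketched
just before the lemma: ``$b,c$ not both $1$'' together with cancellation
forces $b\neq1$ and $c\neq1$; if $a$ is invertible, take $n=0$, $e=a$,
$f=a^{-1}c$; if $a$ is noninvertible, use \eqref{d.cap} repeatedly to write
$a=ca_1=c^2a_2=\cdots$ with $a_j b=c a_j$ at every stage, a process that
must terminate by \eqref{d.ab=ca>'}, and at the terminal $a_k$ one more
application of \eqref{d.cap} gives $c=a_k f$, so $e=a_k$, $f$, $n=k$ work.

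The forward direction of the second equivalence comes from feeding
degenerate instances into \eqref{d.ab=cad>}. Putting $b=1$ recovers
\eqref{d.a=cad>}: if $a$ is noninvertible the alternative ``$ab'=c^n$''
cannot hold, because iterating $a=cad$ puts $a$ in $c^{n+1}M$, making $c$ and
hence $a$ a unit; so $c=1$, and then $d=1$. Putting $d=1$ gives
\eqref{d.ab=ca>'} by the analogous computation, hence \eqref{d.ab=ca>} by the
first equivalence. For the converse, assume \eqref{d.a=cad>} and
\eqref{d.ab=ca>}, let $ab=cad$, and (the case $c=1$ being trivial) assume
$c\neq1$. Apply the left--right dual of \eqref{d.cap}, valid by
\eqref{d.cancel}, to the left ideals $Mb$ and $Md$, which both contain $ab$:
if $Md\subseteq Mb$ then $a=cad_1$, so \eqref{d.a=cad>} (using $c\neq1$)
forces $a$ invertible and ``$ab'=c^n$'' holds; if $Mb\subseteq Md$, then
cancelling $d$ on the right from $a b_1 d=cad$ gives $a b_1=ca$, so
\eqref{d.ab=ca>} yields $a=(ef)^m e$ and $c=ef$, whence $af=c^{m+1}$ and
again ``$ab'=c^n$'' holds.

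The step I expect to be the real obstacle is
$\eqref{d.ab=ca>}\Rightarrow\eqref{d.ab=ca>'}$: one must notice that
divisibility by $c^{n+1}$ is not enough and that the right test is $c^{n+2}$,
and then chase the ``one-sided unit is a unit'' consequences carefully. A
minor pitfall is remembering to use the left--right dual form of
\eqref{d.cap} in the converse of the second equivalence --- otherwise the
argument slides into yet another descent --- and noticing that the hypothesis
``$b,c$ not both $1$'' quietly upgrades to ``$b\neq1$ and $c\neq1$''.
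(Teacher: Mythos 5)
Your proof is correct and follows essentially the same route as the paper's: both directions of the first equivalence rest on the same maximal-power-of-$c$ argument (your descent via~\eqref{d.cap} is just the paper's choice of the largest $n$ with $c^n$ left dividing $a$, and your $c^{n+2}$ computation is its argument that $c^{n+1}\in c^{n+2}M$ forces $c,$ hence $a,$ invertible), the converse of the second equivalence uses the identical case split on $Mb$ versus $Md$ via the left--right symmetric form of~\eqref{d.cap}, and the remaining equivalences are read off by duality, which you justify more explicitly than the paper does. The only real divergence is in deducing \eqref{d.a=cad>} and \eqref{d.ab=ca>} from \eqref{d.ab=cad>}: the paper first extracts the structural decomposition $c=ef,$ $a=(ef)^{n-1}e,$ $b=fed$ from a general relation $ab=cad$ with $c\neq 1$ and then specializes $b=1$ and $d=1,$ whereas you argue by contradiction from infinite divisibility (iterating $a=cad$ to land in $c^{n+1}M)$ and then route \eqref{d.ab=ca>} through the already-established equivalence with \eqref{d.ab=ca>'}; both are valid, yours being slightly shorter at the cost of leaning on the first equivalence.
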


\begin{proof}
To prove that \eqref{d.ab=ca>'}$\Longleftarrow$\eqref{d.ab=ca>},
suppose~\eqref{d.ab=ca>} holds, and
that $ab=ca,$ where $c\neq 1$ and $a$ is not invertible.
By~\eqref{d.ab=ca>}, $a$ will left divide $c^{n+1}$ for some $n.$
Now if $a$ were left divisible by all positive powers
of $c,$ it would in particular be left divisible by $c^{n+2},$
so $c^{n+1}$ would be left divisible by $c^{n+2},$ so
(since $M$ satisfies~\eqref{d.cancel})
$c$ would be invertible; hence so would $a,$ as a left
divisor of $c^{n+1},$ contradicting our assumption.

For the reverse implication, again suppose $ab=ca,$
now assuming $b$ and $c$ not both $1.$
If $a$ is invertible, we can get the
conclusion of~\eqref{d.ab=ca>} by taking $n=0,$
$e=a,$ and $f=ba^{-1}=a^{-1}c.$
If not, then~\eqref{d.ab=ca>'} implies that
there is a largest nonnegative $n$ such that $c^n$ left divides
$a,$ so we can write $a=c^n e$ where $e\in M$
is {\em not} left divisible by $c.$
Substituting into our given relation, and cancelling $c^n$
from the left, we get $e\,b=c\,e,$ and since $e$ is not left
divisible by $c,$~\eqref{d.cap} says $c$ is left divisible by $e;$
say $c=ef.$
Cancellation now gives $b=fe,$ and substituting back, we
get $a=c^n e=(ef)^n e,$ giving the conclusion of~\eqref{d.ab=ca>}.

The implications $\eqref{d.ab=ca>}\iff\eqref{d.ab=ca>''}$
follow from the above by left-right symmetry.

To prepare for the proof of the second line of equivalences, let us
examine some consequences of~\eqref{d.ab=cad>} for a general relation
\begin{equation}\begin{minipage}[c]{35pc}\label{d.ab=cad}
$a\,b\ =\ c\,a\,d$
\end{minipage}\end{equation}
with $c\neq 1.$
Condition~\eqref{d.ab=cad>}
says that $a$ left divides some nonnegative power of $c;$
let $c^n$ be the least {\em positive} power of $c$ which $a$
left divides.
Thus, $c^n$ will be a common right multiple of $a$ and $c^{n-1},$
and $a$ will not be a proper left divisor of $c^{n-1}$ (by
our minimality assumption on $n$ if $n\geq 2,$ or
by the fact that $c^0=1$ has no proper left divisors if $n=1).$
Hence by~\eqref{d.cap}, $a$ must be a right multiple of $c^{n-1};$ say
$a=c^{n-1}e.$
On the other hand, as noted,
$c^n$ is a right multiple of $a,$ say $c^n=af.$
This gives $c^n=c^{n-1}ef;$ so by~\eqref{d.cancel},
\begin{equation}\begin{minipage}[c]{35pc}\label{d.c=}
$c\ =\ e\,f,$
\end{minipage}\end{equation}
and substituting into $a=c^{n-1}e,$
\begin{equation}\begin{minipage}[c]{35pc}\label{d.a=e...e}
$a\ =\ (ef)^{n-1}e.$
\end{minipage}\end{equation}
Substituting~\eqref{d.c=}
and~\eqref{d.a=e...e} into~\eqref{d.ab=cad}, and cancelling
$(ef)^{n-1} e$ on the left, we get
\begin{equation}\begin{minipage}[c]{35pc}\label{d.b=fed}
$b\ =\ f\,e\,d.$
\end{minipage}\end{equation}

To get \eqref{d.ab=cad>}$\implies$\eqref{d.a=cad>},
we now consider the case of~\eqref{d.ab=cad} where $b=1.$
If $c=1$ (the case excluded in the above discussion),
then~\eqref{d.ab=cad} becomes $a=ad,$ which by~\eqref{d.cancel}
gives $d=1,$ confirming~\eqref{d.a=cad>}.
If $c\neq 1,$ then we have~\eqref{d.b=fed}, which for $b=1$
implies (in view of~\eqref{d.cancel}), that $f,$ $e$
and $d$ are invertible, hence by~\eqref{d.a=e...e}, that $a$
is invertible, the case about which~\eqref{d.a=cad>} makes no assertion,
completing the proof that $M$ satisfies~\eqref{d.a=cad>}.

To see that \eqref{d.ab=cad>}$\implies$\eqref{d.ab=ca>},
we note that in the case $d=1$ of~\eqref{d.ab=cad}, the
consequences~\eqref{d.c=}-\eqref{d.b=fed}
are precisely the conclusion of~\eqref{d.ab=ca>}.

To show, conversely, that
$\eqref{d.a=cad>}\wedge\eqref{d.ab=ca>}\implies\eqref{d.ab=cad>},$
note that given a relation~\eqref{d.ab=cad}, the elements
$b$ and $d$ have a common left multiple, hence
by~\eqref{d.cap} one is a left multiple of the other.
If $b=b'd,$ then we can cancel $d$ from
the two sides of~\eqref{d.ab=cad}, getting a relation
to which we can apply~\eqref{d.ab=ca>} to get the desired conclusion.
If $d=d'b,$ we can similarly cancel a $b$ and apply~\eqref{d.a=cad>},
and conclude either $c=1,$ or $a$ is invertible.
The former is one of the alternative conclusions
of~\eqref{d.ab=cad>}, while the latter is the $n=0$ case of
the other alternative.

Since~\eqref{d.a=cad>} and~\eqref{d.ab=ca>}
are both left-right symmetric, the equivalence of their
conjunction with~\eqref{d.ab=cad>}
also implies the equivalence of that conjunction with the dual
statement,~\eqref{d.ba=cad>}.
\end{proof}

We remark that in monoids $M$ for which
$DM$ is a semifir, one {\em can} have arbitrarily large
powers of one noninvertible element left dividing another.
For instance, in the direct limit of the maps of free monoids
$\lang x,\,y_0\rang\to\lang x,\,y_1\rang\to\dots\to
\lang x,\,y_n\rang\to\dots,$ where each map sends $y_i$ to
$x\,y_{i+1}\,x,$ the element $y_0$ is infinitely divisible
by $x$ on both sides.
But~\eqref{d.ab=ca>'} and~\eqref{d.ab=ca>''} tell us,
somewhat mysteriously, that such infinite divisibility is
excluded for elements appearing in certain slots of a
relation~$a\,b=c\,a.$

Going back to~\eqref{d.a=cad>}, let us record a
generalization of that condition, which we will use in \S\ref{S.cm}.

\begin{lemma}\label{L.a1a2>}
Let $M$ be a monoid satisfying~\eqref{d.cancel}-\eqref{d.a=cad>}.
Then for every natural number $n,$ $M$ also satisfies
\begin{equation}\begin{minipage}[c]{35pc}\label{d.a1a2>}
If $a_1\dots a_n=c_0\,a_1\,c_1\,a_2\dots\,c_{n-1}\,a_n\,c_n,$
where $a_1,\dots,a_n$ are all noninvertible,\\
then $c_0=\dots=c_n=1.$
\end{minipage}\end{equation}
\end{lemma}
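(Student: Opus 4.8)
The plan is to argue by induction on $n$. The base case $n=1$ is nothing but condition~\eqref{d.a=cad>}: a relation $a_1=c_0\,a_1\,c_1$ with $a_1$ noninvertible forces $c_0=c_1=1$. For the inductive step I would assume~\eqref{d.a1a2>} holds with $n-1$ in place of $n$, and start from a relation $a_1\cdots a_n=c_0\,a_1\,c_1\,a_2\cdots c_{n-1}\,a_n\,c_n$ with $a_1,\dots,a_n$ all noninvertible.

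The key move is to compare the two visible left divisors $a_1$ and $c_0\,a_1$ of the common value of the two sides. Since $a_1M$ and $c_0\,a_1M$ both contain that common value, their intersection is nonempty, so~\eqref{d.cap} gives that one of $a_1M$, $c_0\,a_1M$ contains the other; that is, either $a_1=c_0\,a_1\,t$ for some $t\in M$, or $c_0\,a_1=a_1\,s$ for some $s\in M$.

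In the first case,~\eqref{d.a=cad>} applied to $a_1=c_0\,a_1\,t$ (using that $a_1$ is noninvertible) yields $c_0=1$; cancelling $a_1$ on the left of the original relation then leaves $a_2\cdots a_n=c_1\,a_2\,c_2\cdots c_{n-1}\,a_n\,c_n$, which is an instance of~\eqref{d.a1a2>} for the $n-1$ noninvertible elements $a_2,\dots,a_n$, so the induction hypothesis gives $c_1=\dots=c_n=1$. In the second case, substituting $c_0\,a_1=a_1\,s$ and cancelling $a_1$ on the left gives $a_2\cdots a_n=(s\,c_1)\,a_2\,c_2\,a_3\cdots c_{n-1}\,a_n\,c_n$, again an instance of~\eqref{d.a1a2>} for $n-1$; the induction hypothesis now forces $s\,c_1=1$ and $c_2=\dots=c_n=1$, whence $c_0\,a_1\,c_1=a_1(s\,c_1)=a_1$, and a final application of~\eqref{d.a=cad>} peels off $c_0=c_1=1$.

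Throughout, the argument uses only~\eqref{d.cancel},~\eqref{d.cap}, and~\eqref{d.a=cad>}, as the hypotheses permit; I do not expect any genuine obstacle beyond keeping the indexing of the coefficients straight. The one point deserving a little care is the second case, where the induction hypothesis delivers $s\,c_1=1$ rather than $c_1=1$ directly, so one must feed this back through the relation $c_0\,a_1=a_1\,s$ before invoking~\eqref{d.a=cad>} to remove $c_0$ and $c_1$ simultaneously.
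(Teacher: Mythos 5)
Your proof is correct and follows essentially the same route as the paper's: induction on $n$, an application of~\eqref{d.cap} to the two visible left divisors of the common value, with~\eqref{d.a=cad>} disposing of one branch and the induction hypothesis of the other. The only (harmless) variation is that you compare $a_1$ with $c_0a_1$ where the paper compares $a_1$ with $c_0a_1c_1$, which is why you must feed $sc_1=1$ back through the relation at the end rather than simply cancelling a factor and re-invoking the inductive hypothesis.
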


\begin{proof}
The $n=0$ case (with the hypothesis understood
to be $1=c_0)$ is a tautology, and
the $n=1$ case is~\eqref{d.a=cad>},
so let $n>1,$ assume inductively that
the desired result is known for $n-1,$ and suppose
we are given $a_1,\,a_2,\,\dots\,,\,a_n,$
and $c_0,\,c_1,\,\dots\,,\,c_n,$ satisfying the conditions stated.
We note that if $c_0=1,$ we can cancel $a_1$ on the left, and our
inductive assumption gives the desired conditions
on the remaining $c_i;$ and the analogous argument works if $c_n=1.$

Now by~\eqref{d.cap} applied to the relation
$a_1\cdot(a_2\dots a_n) = (c_0 a_1 c_1)\cdot(a_2\dots\,c_{n-1}a_n c_n),$
one of $a_1$ and $c_0 a_1 c_1$ must left divide the other;
so we can either write $a_1 d = c_0 a_1 c_1$ or $a_1 = c_0 a_1 c_1 d.$
In the latter case, we can apply~\eqref{d.a=cad>} to this relation,
and, in particular, get $c_0=1,$ which we have noted
gives our desired inductive step.
In the former case, we substitute $a_1 d$ for the $c_0 a_1 c_1$ in
our given relation and cancel $a_1$ from the two sides, getting
a relation to which our inductive assumption applies.
In particular, this gives $c_n=1,$ which we have noted also
gives the desired inductive step.
\end{proof}

\section{Moving toward Ced\'{o}'s example}\label{S.C_eg_motiv}

In this section we shall motivate Ced\'{o}'s example of a
monoid $M$ which satisfies the necessary
conditions~\eqref{d.G_l_f}-\eqref{d.ab=ca>}
for $DM$ to be a semifir, but is not a direct limit of
coproducts of free monoids and free groups.
In \S\ref{S.C_eg} we describe it formally, along with a
slight variant construction; in \S\ref{S.C_eg_props} we prove
that both monoids satisfy~\eqref{d.G_l_f}-\eqref{d.ab=ca>},
and in \S\ref{S.semifir}, obtain
some approximations to the statement that the rings $DM$ are semifirs.

Part of our development must, of course, be a proof that
our monoids $M$ are {\em not} direct limits of
coproducts of free monoids and free groups; so
let us begin by obtaining a necessary condition for a monoid
to be such a direct limit.
The problem of constructing a monoid that fails to
satisfy that condition but does
satisfy \eqref{d.G_l_f}-\eqref{d.ab=ca>}
will then motivate the examples.

We begin with a result about a wider class of coproduct monoids.

\begin{lemma}\label{L.abd=bae>}
Let $N$ and $N'$ be cancellative monoids,
such that $N'$ has no invertible elements other than $1,$
and suppose that in the coproduct monoid $N\smallcoprod N',$ we have
elements satisfying
\begin{equation}\begin{minipage}[c]{35pc}\label{d.abd=bae}
$a\,b\,d\ =\ b\,a\,e,$
\end{minipage}\end{equation}
where $d$ and $e$ lie in $N,$ while
$a$ and $b$ do not both lie in $N.$
Then $d=e.$
\end{lemma}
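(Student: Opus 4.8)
The plan is to analyze the equation $abd = bae$ using the normal form theory for coproducts of monoids. In $N \smallcoprod N'$, every element has a unique normal form as an alternating product of non-identity "syllables" drawn alternately from $N \setminus \{1\}$ and $N' \setminus \{1\}$; and since $N'$ has no invertible elements other than $1$, the only invertible elements of the coproduct are the invertible elements of $N$. First I would dispose of trivial cases: if $a = 1$ or $b = 1$, then since $a$ and $b$ do not both lie in $N$, we would need the other to lie outside $N$, but $1 \in N$, contradiction — so actually, the hypothesis that $a, b$ are not both in $N$ forces both $a$ and $b$ to be noninvertible (if say $a$ were invertible, $a \in N$, so $b \notin N$; that case still needs handling). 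Let me instead write $a$ and $b$ in normal form and compare leftmost and rightmost syllables on the two sides of $abd = bae$.

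The key step is a syllable-counting and syllable-matching argument. Write out the normal forms of $a$ and $b$. On the left side $a \cdot b \cdot d$, the product is formed by concatenating the normal forms of $a$, $b$, $d$ and amalgamating at the two junctions when adjacent syllables come from the same factor; similarly on the right side $b \cdot a \cdot e$. Since $d, e \in N$, multiplying by $d$ on the right only possibly modifies (or appends to) the last syllable of $ab$ if that last syllable is in $N$, and likewise for $e$ and $ba$. The plan is to show that the "middle part" — everything except the influence of $d$ and $e$ — must already match up, so that $ab$ and $ba$ have the same normal form up to their last $N$-syllable, and then cancellation in $N$ (using that $N$ is cancellative) yields $d = e$. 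Concretely: from $abd = bae$, by uniqueness of normal form, the total syllable lengths must be comparable, and I would argue that $a$ and $b$ must "interleave compatibly." The cleanest route: let $u$ be the longest common prefix (in the syllable sense) of $ab$ and $ba$; then $abd = bae$ forces $u^{-1}(ab)\, d = u^{-1}(ba)\, e$ where the remaining parts $u^{-1}(ab)$ and $u^{-1}(ba)$ start with syllables from different free factors (or one is empty). If both remaining parts are nonempty and start in different factors, then after right-multiplying by $d, e \in N$ the leading syllables still disagree (right multiplication by an element of $N$ cannot change the leading syllable), contradicting the equality; hence one of the remaining parts is empty, i.e., $ab$ is a syllable-prefix of $ba$ or vice versa, say $ba = ab \cdot w$. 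Then $abd = ab\, w\, e$, so by cancellation (the coproduct of cancellative monoids is cancellative — or argue directly via normal forms) $d = we$, forcing $w \in N$ and in fact, by a length/parity argument combined with the symmetric relation $ab \cdot w = ba$, that $w = 1$, whence $d = e$.

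The main obstacle I anticipate is handling the case distinctions cleanly when $a$ or $b$ lies entirely in one free factor, and making rigorous the claim that right-multiplication by elements of $N$ interacts with normal forms in a controlled way — in particular, tracking what happens when the last syllable of $a$ (or of $ab$) lies in $N$ versus in $N'$. A subtle point: because $N$ may have invertible elements, "longest common prefix" needs care, since an invertible syllable in $N$ could be absorbed. I would handle this by observing that if $a$ and $b$ are not both in $N$, then at least one of them has a syllable in $N'$, and I would run the argument by peeling syllables from the left of the relation $abd = bae$ one at a time: as long as the leftmost syllables of $a$ and of $b$ agree (both forced, since they are the leftmost syllables of the two equal elements), cancel them; eventually we reach a stage where (after the cancellations) the relation reads $a'b'd = b'a'e$ with $a'$ and $b'$ beginning in different free factors or one of them trivial — and an element of $N'\setminus\{1\}$ must appear among these leading syllables (else everything is in $N$ and we are in a pure-$N$ relation $a'b'd = b'a'e$, handled by cancellativity of $N$). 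Then the $N'$-syllable on one side must be matched on the other, which pins down the structure enough to conclude $d = e$.

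Alternatively, and perhaps more slickly, I would use a length function: let $\ell$ count $N'$-syllables. Both sides of $abd = bae$ have the same $\ell$-value, and since $d, e \in N$ contribute nothing, $\ell(ab) = \ell(ba)$ with the junctions contributing equally; a short analysis of how $\ell$ behaves under the product shows the $N'$-syllables of $a$ and of $b$ must line up in only one way, and then the $N$-parts are governed by cancellativity in $N$, yielding $d = e$. I expect the write-up to be two to three paragraphs of normal-form bookkeeping; the genuinely delicate point, and the one I would spend the most care on, is the interaction of invertible elements of $N$ with the normal form at the junction syllables.
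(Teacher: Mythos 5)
Your overall framework (normal forms in $N\smallcoprod N',$ the fact that $\!N'\!$-syllables cannot cancel because $N'$ has no nontrivial units, right multiplication by $d,e\in N$ affecting only the final $\!N\!$-syllable) is the right one, and it is the paper's framework too. But none of the three arguments you sketch actually closes, and the place where they all break down is precisely the content of the lemma. First, the ``peeling'' iteration is not sound: if $a=sa'$ and $b=sb'$ share their leftmost syllable $s,$ then cancelling $s$ from $abd=bae$ leaves $a'\,s\,b'\,d=b'\,s\,a'\,e,$ which is \emph{not} of the form $a'b'd=b'a'e$ --- the cancelled syllable reappears interleaved in the middle --- so the induction does not restart. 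Second, in the longest-common-prefix version, after removing the common prefix the two remainders begin with \emph{different syllables from the same free factor} (each follows the same last syllable of the prefix), not with syllables from different factors; and your claim that right multiplication by an element of $N$ cannot change the leading syllable fails exactly in the short-remainder edge cases, which is where the problem lives. Third, having reduced (optimistically) to $ba=ab\,w$ with $w\in N,$ you assert $w=1$ by ``a length/parity argument,'' but no count of syllables suffices here: the syllable counts of $ab$ and $ba$ already agree, and what must be compared are the \emph{values} of the $\!N\!$-syllables sitting at the junctions.

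That junction comparison is the missing idea. Writing $a=v\,a_0\,w$ and $b=v\,b_0\,w'$ with $v$ the common initial $\!N\!$-factor and $w,w'$ the final $\!N\!$-factors (possibly $1$), the products $ab$ and $ba$ acquire \emph{internal} $\!N\!$-factors $wv$ and $w'v$ respectively, located at different positions in the two words; the paper pins these down by observing that the multiset of internal $\!N\!$-factors of the two sides of~\eqref{d.abd=bae} must coincide, whence $wv=w'v,$ so $w=w',$ and only then does cancellation in $N$ on the final $\!N\!$-factors give $d=e.$ Some such comparison of junction values --- not lengths --- is indispensable, and your proposal never supplies it. You also leave unhandled the case where one of $a,b$ lies in $N$ (your opening paragraph half-dismisses it and half-defers it); there one compares \emph{initial} $\!N\!$-factors to force that element to be $1.$ As written, the proposal is a set of plausible strategies rather than a proof.
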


\begin{proof}
Let elements of $N\smallcoprod N'$ be written as
alternating strings of nonidentity elements of $N$ and of $N'.$
When we speak of the {\em initial} (respectively, {\em final})
$\!N\!$-factor of an element, we will mean the initial (final)
term of that string if this is an element of $N,$ or $1$ if it is not.

Let us begin by handling the case where one of $a,$ $b$ lies in $N.$
By symmetry, we can assume this is $a,$ so by hypothesis, $b\notin N.$
Letting $r$ be the initial $\!N\!$-factor
of $b,$ and comparing the initial $\!N\!$-factors of
the two sides of~\eqref{d.abd=bae}, we get $ar = r,$
so as $N$ is cancellative, $a = 1,$ so another
application of cancellativity gives $d=e$ as claimed.

Now assume neither $a$ nor $b$ lies in $N.$
Since $N'$ has no invertible elements, multiplication of two
elements of $N\smallcoprod N'$ cannot cause $\!N'\!$-factors
to cancel, and thus allow $\!N\!$-factors that
these had separated to interact; hence from~\eqref{d.abd=bae}
we can conclude that $a$ and $b$ have the same initial $\!N\!$-factor.
Call that $v,$ and
call their final $\!N\!$-factors $w$ and $w'$ respectively, writing
$a=v\,a_0\,w,$ $b = v\,b_0\,w',$ where $a_0$ and $b_0$ both have initial
and final $\!N\!$-factors~$1.$
Note that if $w=w',$ then by considering the final $\!N\!$-factors
in~\eqref{d.abd=bae}, and cancelling, we get the desired
conclusion; so we will complete the proof by assuming
$w\neq w',$ and getting a contradiction.

Substituting our expressions for $a$ and
$b$ into~\eqref{d.abd=bae}, and cancelling the
common initial factor $v,$ we get
\begin{equation}\begin{minipage}[c]{35pc}\label{d.a0...e}
$a_0\,w\,v\,b_0\,w' d\ =\ b_0\,w' v\,a_0\,w\,e.$
\end{minipage}\end{equation}

By the above equality, the family (set with multiplicity) of internal
(i.e., neither initial nor final) $\!N\!$-factors occurring on
the two sides of~\eqref{d.a0...e} must be the same.
Now by assumption, $wv$ and $w'v$ cannot both be $1.$
If one of them is $1,$ then we get different numbers
of internal $\!N\!$-factors on the two sides
of~\eqref{d.a0...e}, so that case is excluded.
If neither is $1,$ then the family of internal $\!N\!$-factors
on the left-hand side is the union (with multiplicity)
of the families of internal $\!N\!$-factors of $a_0$ and of $b_0,$
together with the single additional factor $wv,$ while on the right
we have the same with $w'v$ in place of $wv.$
Hence $wv=w'v,$ contradicting our assumption that $w\neq w',$
and completing the proof.
\end{proof}

\begin{corollary}\label{C.ab=bag>}
Let $M$ be a direct limit of monoids each of which is a coproduct
of a free group and a free monoid \textup{(}or more generally,
each of which is a coproduct of an arbitrary group, and a cancellative
monoid which has no invertible elements other than $1,$ and which
admits a homomorphism $h$ into an abelian group, such
that $h$ carries no nonidentity element to $1).$

Then in $M,$
\begin{equation}\begin{minipage}[c]{35pc}\label{d.ab=bag>}
If $a\,b = b\,a\,g,$ where $a$ and $b$ not both invertible, then $g=1.$
\end{minipage}\end{equation}
\end{corollary}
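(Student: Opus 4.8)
The plan is to reduce the statement about a direct limit to a statement about a single coproduct monoid, and then to apply Lemma \ref{L.abd=bae>}. First I would observe that a relation $ab = bag$ in a direct limit $M = \varinjlim M_\lambda$ of monoids of the stated type must already hold in some $M_\lambda$: the three elements $a,b,g$ are images of elements $\bar a,\bar b,\bar g$ of some $M_\lambda$, and since both sides of the equation map to the same element of $M$, after passing to a later index in the directed system the equation $\bar a\bar b = \bar b\bar a\bar g$ holds in $M_\lambda$ itself. Moreover ``not both invertible'' is inherited: if $\bar a$ and $\bar b$ were both invertible in $M_\lambda$, their images $a,b$ would be invertible in $M$, contrary to hypothesis. (Here I am using that $g = 1$ in $M_\lambda$ implies $g = 1$ in $M$, so it suffices to prove the conclusion in $M_\lambda$.) So it is enough to prove \eqref{d.ab=bag>} for $M = N \smallcoprod N'$, where $N$ is an arbitrary group and $N'$ is a cancellative monoid with no nonidentity invertible elements admitting a homomorphism $h$ to an abelian group killing no nonidentity element.

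Next I would put the relation $ab = bag$ into the shape \eqref{d.abd=bae} of Lemma \ref{L.abd=bae>}, which requires the ``exponent'' elements to lie in the group factor $N$. Rewrite $ab = bag$ as $ab\cdot 1 = ba\cdot g$; to apply the lemma directly with $d = 1 \in N$ and $e = g$ I would need $g \in N$. One cheap way to arrange this: observe first that if $a$ and $b$ both lie in $N$, then since $N$ is a group the relation $ab = bag$ gives $g = 1$ immediately by cancellation in $N$; so assume $a,b$ are not both in $N$. Then apply the homomorphism $N\smallcoprod N' \to N \times (N'\gp)$ (coproduct property, using $h$ composed with the inclusion of $N'\gp$ into an abelian group, or more simply the canonical map to the coproduct of $N$ with the universal group of $N'$): comparing images shows the $N'$-part of $g$ must be trivial in the relevant abelian image, and hence — using that $h$ (equivalently the universal map) kills no nonidentity element of $N'$ — that $g$ actually has trivial $N'$-syllables, i.e. $g \in N$. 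With $g \in N$ in hand, \eqref{d.abd=bae} holds with $d = 1$, $e = g$, and Lemma \ref{L.abd=bae>} (whose hypothesis ``$a,b$ not both in $N$'' we have just arranged) yields $d = e$, that is, $g = 1$.

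I expect the main obstacle to be the middle step: showing that $g$ must lie in the group factor $N$. The cleanest route is the abelianization/universal-group argument sketched above — map $N\smallcoprod N'$ onto $N\gp \times N'\gp$ (or onto an abelian group via $h$ on the $N'$ coordinate), and use the injectivity-on-nonidentity-elements hypothesis on $h$ to conclude that a cancelling-out of $g$'s $N'$-content forces that content to be empty. One should be slightly careful that the hypothesis is stated as ``$h$ carries no nonidentity element of $N'$ to $1$,'' which is exactly what is needed to deduce that an $N'$-word mapping to $1$ under $h$ is itself the identity only after one has reduced it to a single syllable; so I would first collect the $N'$-syllables of $g$, note their product lies in $N'$, apply $h$, and conclude that product is $1 \in N'$, hence (iterating, or just because $g \in N\smallcoprod N'$ is already in reduced form) $g$ has no $N'$-syllables at all. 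Once $g \in N$, the appeal to Lemma \ref{L.abd=bae>} is immediate and the proof is complete.
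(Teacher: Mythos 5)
Your proposal is correct and follows essentially the same route as the paper: reduce to a single coproduct $N\smallcoprod N'$ via the direct-limit argument, show $g\in N$ by composing the retraction onto $N'$ with $h$ and cancelling in the abelian target, and then invoke Lemma~\ref{L.abd=bae>} with $d=1,$ $e=g.$ One small point: the reason each individual $\!N'\!$-syllable of $g$ must be trivial once their product is shown to equal $1$ is that $N'$ has no invertible elements other than $1$ (so a product equal to $1$ forces every factor to be $1$), not merely that $g$ is written in reduced form.
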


\begin{proof}
To see that the parenthetical generalized hypothesis does indeed cover
the original hypothesis, note that on a free monoid, the
degree function in the free generators has the properties
required of $h.$

Note further that it suffices to prove~\eqref{d.ab=bag>}
when $M$ is itself a free product of a group and a monoid
of the indicated sort.
For if $a\,b=b\,a\,g$ in a direct limit of such monoids, one can lift
$a,$ $b$ and $g$ to one of the monoids whose limit is being
taken, find a later step in the limit process where the
indicated equality holds, apply~\eqref{d.ab=bag>} in
that monoid, and conclude that the given case
of~\eqref{d.ab=bag>} holds in~$M.$

So let us assume that $M$ is the coproduct of a
group $N,$ and a cancellative monoid $N'$ of the indicated sort.
Note that if we apply to both sides of our given relation
the homomorphism $f:M\to N'$ that kills $N$ and acts as
the identity on $N',$ then apply the homomorphism $h$ of
our hypothesis, and then cancel $h(f(a))\,h(f(b))=h(f(b))\,h(f(a)),$ we
get $1=h(f(g)).$
Hence $1=f(g),$ hence no factor from $N'$
in the expression for $g$ can fail to equal $1,$ hence $g\in N.$
Hence we can apply Lemma~\ref{L.abd=bae>} with $d=1,$ $e=g$
(noting that the statement that $a,$ $b$ are not both invertible means
that they don't both belong to $N),$ and the result follows.
\end{proof}

Let us now try to find a monoid
$M$ that satisfies~\eqref{d.G_l_f}-\eqref{d.ab=ca>},
but not~\eqref{d.ab=bag>}.
A difficulty is that condition~\eqref{d.ab=ca>}, which
we want to be satisfied, and~\eqref{d.ab=bag>}, which we want
to fail, are closely related.
Given elements of a monoid
$M$ satisfying $a\,b=b\,a\,g$ with $a$ and $b$ not both
invertible and $g\neq 1,$ we can apply~\eqref{d.ab=ca>} with
the roles of $a,$ $b$ and $c$ played
by $b,$ $ag$ and $a,$ and (naming the $e$ and $f$ of the
conclusion of this instance of~\eqref{d.ab=ca>} $b'$ and
$a'),$ conclude that there
exists a nonnegative integer $n$ and elements $a',\,b'\in M$ such that
\begin{equation}\begin{minipage}[c]{35pc}\label{d.b=}
$b=(b'\,a')^n\,b',$\qquad $a=b'\,a',$\qquad $a\,g=a'\,b'.$
\end{minipage}\end{equation}

Substituting the second of these equations into
the last one, and interchanging the two sides of the result,
we get $a'b'=b'a'g,$ an equation of the sort we started with.
We can now make a second application of~\eqref{d.ab=ca>},
and conclude similarly that for some $n'\geq 0$ and $b'',c''\in M,$
we have $b'=(b''a'')^{n'} b'',$ $a'=b''a'',$ and $a'g=a''b''.$
And so on.

Once we see this
pattern, it is not hard to construct examples of this behavior.
Note that to get instances of $a\,b=b\,a\,g$ in a {\em group},
we need merely choose $a$ and $b;$ solving the equation
then gives $g=a^{-1}b^{-1}a\,b.$
So we can start in the free group on generators $x$ and $y,$
and let $M_0$ be its submonoid
$\lang x,\,y,\,x^{-1}y^{-1}x\,y\rang\md$
(where $\lang\dots\rang\md$ denotes generation as a monoid).
Here $x,$ $y,$ and $x^{-1}y^{-1}x\,y$
will play the roles of $a,$ $b$ and $g$ in the preceding discussion.
To insure that this relation $a\,b=b\,(a\,g)$
satisfies~\eqref{d.ab=ca>} in the monoid we are aiming
for, we should, by the above discussion,
map the above monoid into the free group on generators $x'$ and $y',$
by the map which, for some choice of $n,$ sends $x$ and $y$
respectively to $y'\,x'$ and $(y'\,x')^n y',$ and adjoin
$x'$ and $y'$ to the image of this monoid $M_0$ in that group.
Not surprisingly (given our preceding calculations where
the element $g$ kept its role at successive stages), we find
that this homomorphism maps $x^{-1}y^{-1}x\,y$
to $x'^{-1}y'^{-1}x'\,y'.$
Moreover, since $x$ and $y$ have been expressed in terms
of $x'$ and $y',$ we can now work in the monoid
$\lang x',\,y',\,x'^{-1}y'^{-1}x'\,y'\rang\md.$
We can repeat this process indefinitely, using possibly
different exponents $n,$ $n',$ etc.\ at successive stages, and let
$M$ be the direct limit of this process.
(This is not a direct limit of the form referred
to in Dicks' conjecture, since our successive submonoids of free
groups are not coproducts of a free group and a free monoid.)

Actually, there is no need to use different copies of
the free group on two generators at successive
stages; in other words, we can consider the group
homomorphisms in the above construction to be a chain
of endomorphisms $G\to G\to\,\dots\ ,$ where $G$ is the free
group on $x$ and $y.$
We find that these endomorphisms are actually automorphisms
of $G,$ since one can express $x$ and $y$ in terms of
$y\,x$ and $(y\,x)^n y.$
(The generator $y$ can be obtained
by left-multiplying $(y\,x)^n y$ by the $\!(-n)\!$-th
power of $y\,x,$ and then $x$ can be recovered from $y$ and $yx.)$
So the direct limit of the groups $G$ under these
maps will still be $G;$ and the direct limit of the
monoids $M_0$ will be the union of an increasing
chain of isomorphic copies of $M_0$ within~$G.$

We can get such a direct limit monoid $M$ using any sequence
of choices of $n,$ $n',$ $n'',$ etc..
What sequence shall we try?

The simplest choice is $n=n'=\dots=0,$ so that each of
our maps carries $x$ to $y\,x$ and $y$ to $y,$
But this is too simple.
Although the relation $x\cdot y=y\cdot xz$ behaves well in
the limit monoid, if
we re-partition it as $x\cdot y=yx\cdot z,$
we find that it does not satisfy~\eqref{d.cap}.

But going to the next simplest choice, $n=n'=\dots=1,$ we shall
see that the resulting monoid has the properties we want.
It is, in fact, the example developed by Ced\'{o}
in \cite[pp.\,128-131]{FC}.
We shall study that example, and a variant, in the next three sections.

\section{Ced\'{o}'s example, and a variant}\label{S.C_eg}
Below, we shall write $\lang\dots\rang\md$ for generation
or presentation as a monoid, $\lang\dots\rang\gp$ for generation
or presentation as a group.

Let us name the group we will be working in, and the submonoid thereof
from which we will build Ced\'{o}'s example as a direct limit.
\begin{equation}\begin{minipage}[c]{35pc}\label{d.G>M_0}
$G$ will denote the free group on two generators $x$ and $y.$\\
$M_0$ will denote the submonoid of $G$ generated by $x,\,y,$
and $z=x^{-1}y^{-1}x\,y.$
\end{minipage}\end{equation}

We now gather some information about $M_0.$

\begin{lemma}\label{L.M_0}
A presentation for the monoid $M_0$ of~\eqref{d.G>M_0}
is $\lang x,\,y,\,z\mid y\,x\,z=x\,y\rang\md.$

A normal form for its elements is given by all strings in the
symbols $x,$ $y$ and $z$ containing no substring $y\,x\,z.$

The universal group of $M_0$ is $G,$ with the inclusion of $M_0$ in $G$
as in~\eqref{d.G>M_0} as the universal map.
\end{lemma}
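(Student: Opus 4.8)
The plan is to deduce all three assertions from the single statement that the natural surjection from the abstract monoid $P:=\langle x,\,y,\,z\mid y\,x\,z=x\,y\rangle\md$ onto $M_0$ is an isomorphism. First I would check that the relation does hold in $G$, since $y\,x\,(x^{-1}y^{-1}x\,y)=x\,y$; this gives a monoid map $P\to M_0$ sending the generators to $x,\,y,\,z$, which factors as $P\twoheadrightarrow M_0\hookrightarrow G$ and is surjective because $x,\,y,\,z$ generate $M_0$. Next, the universal group of $P$ is $\langle x,\,y,\,z\mid y\,x\,z=x\,y\rangle\gp$, in which $z=(y\,x)^{-1}(x\,y)$ is redundant, so it is the free group $\langle x,\,y\rangle\gp=G$; and under this identification the universal map $P\to G$ is precisely the composite $P\twoheadrightarrow M_0\hookrightarrow G$ above. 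Hence, once $P\to M_0$ is shown injective, we get $P\cong M_0$ (the presentation), the elements of $M_0$ correspond bijectively to the normal forms of $P$, and the universal group of $M_0$ is $G$ with the inclusion $M_0\hookrightarrow G$ as universal map; so it remains to analyze $P$ and to prove the injectivity.

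For the normal forms of $P$ I would use the one-rule string-rewriting system $y\,x\,z\to x\,y$. It is length-decreasing, hence terminating; and its single left-hand side $y\,x\,z$ has no self-overlap — no nonempty word is simultaneously a proper prefix and a proper suffix of $y\,x\,z$ (the proper prefixes are $y,\,y\,x$ and the proper suffixes are $z,\,x\,z$) — so there are no critical pairs and the system is locally confluent, hence confluent. Therefore every element of $P$ is represented by a unique irreducible word, i.e.\ by a unique word in $x,\,y,\,z$ with no substring $y\,x\,z$. This establishes the asserted normal form once we know $P\cong M_0$.

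The crux is showing that distinct $yxz$-free words have distinct images in $G$ under the substitution $\phi\colon x\mapsto x,\ y\mapsto y,\ z\mapsto x^{-1}y^{-1}x\,y$ (equivalently, that $P\to M_0$ is injective). The plan is to prove, by induction on word length, a ``leading-letter test'': for a nonempty $yxz$-free word $w$, the first letter of the reduced word $\phi(w)\in G$ is $y$ exactly when $w$ begins with $y$, is $x^{-1}$ exactly when $w$ begins with $z$, and is $x$ or $y^{-1}$ exactly when $w$ begins with $x$ (while $\phi(w)=1$ only for $w$ empty — which also follows from abelianizing $G$, where the $x$- and $y$-exponent sums of $\phi(w)$ count the $x$'s and $y$'s of $w$, and $[x,y]^{k}=1$ forces $k=0$). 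In the induction the only cancellation that could spoil the test is the trailing $y$ of $\phi(z)=x^{-1}y^{-1}xy$ meeting a following factor whose $\phi$-image begins with $y^{-1}$; by the inductive hypothesis that occurs only when the remainder of $w$ begins $x\,z$, i.e.\ only when $w$ itself begins $y\,x\,z$, which $yxz$-freeness forbids. Granting the test, one recovers $w$ from $\phi(w)$ by repeatedly reading off its leading letter and passing from $\phi(w)$ to $\phi(\text{that letter})^{-1}\phi(w)=\phi(w')$, with $w'$ again $yxz$-free; injectivity follows, hence $P\cong M_0$, and the three assertions are proved.

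The main obstacle is this last step: carrying out the bookkeeping of free-group cancellations in the inductive proof of the leading-letter test, tracking exactly how the forbidden prefix $y\,x\,z$ is the only thing that could break it. (An alternative to the hands-on argument would be to invoke a classical embeddability theorem for one-relator monoids — e.g.\ Adjan's — since $y\,x\,z$ and $x\,y$ have distinct first letters and distinct last letters, so $P$ embeds in its universal group $G$, the embedding being $\phi$; but the direct argument is short enough to be worth giving.) Everything else — the relation holding in $G$, the redundancy of $z$ in the group presentation, and the termination and confluence of the rewriting system — is routine.
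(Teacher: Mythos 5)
Your overall architecture is the same as the paper's: reduce everything to showing that distinct $y\,x\,z\!$-free words have distinct images in $G$ under $\phi,$ and get the universal group by Tietze-eliminating $z$ from $\lang x,y,z\mid yxz=xy\rang\gp.$ Your confluence observation (the rule $y\,x\,z\to x\,y$ is length-reducing and its left-hand side has no self-overlap) is a clean way to see that irreducible words form a transversal of $P;$ the paper gets the normal form instead as a byproduct of the embedding. All of that part is fine, as is the abelianization argument for $\phi(w)=1\implies w$ empty.

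The one genuine gap is in the crux. The leading-letter test, as stated, is too weak to serve as the inductive hypothesis: knowing only the first letter of the reduced form of $\phi(w')$ does not determine the first letter of $\phi(\ell\,w'),$ because cancellation at the junction may in principle propagate past one letter. Moreover, your diagnosis that ``the only cancellation that could spoil the test occurs only when $w$ begins $y\,x\,z$'' is not correct as written: take $w=z\,x\,z,$ which is permitted. Here the trailing $y$ of $\phi(z)=x^{-1}y^{-1}x\,y$ \emph{does} cancel against the leading $y^{-1}$ of $\phi(xz)=y^{-1}x\,y;$ the leading $x^{-1}$ survives only because the cancellation stops after one letter, and to certify that you must know the \emph{second} letter of the reduced form of $\phi(xz\cdots),$ not just the first. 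The repair is routine but must be made explicit: strengthen the induction to a short-prefix test, e.g.\ that the reduced form of $\phi(w)$ begins with $y,$ with $x,$ with $y^{-1}x,$ or with $x^{-1}y^{-1}x$ according as $w$ begins with $y,$ with $x$ but not $x\,z,$ with $x\,z,$ or with $z$ (the forbidden prefix $y\,x\,z$ is exactly what rules out the one bad case $w=y\,x\,z\cdots$). With that hypothesis each of the four cases closes by a one-step computation. The paper avoids the induction altogether by computing the reduced form of $\phi(w)$ globally --- collapsing each maximal block $(x\,z)^m$ to $y^{-1}x^m\,y$ and then $z\,(y^{-1}x^m\,y)$ to $x^{-1}y^{-1}x^{m+1}y$ --- and then comparing the first letters of a minimal counterexample $r=s;$ either route works once the cancellation bookkeeping is actually carried out.
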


\begin{proof}
Clearly the relation $y\,x\,z=x\,y$ holds in $M_0.$
Since replacing substrings $y\,x\,z$ by $x\,y$ reduces the length
of any string, we can, by successive applications of that reduction,
transform any string into one containing no substring $y\,x\,z.$
We shall prove next that if two strings containing no such
substrings represent the same element of $G,$ then they are equal
as strings.

This will imply the presentation and
normal-form assertions of the lemma.
To get the final assertion, note that since
$M_0=\lang x,\,y,\,z\mid y\,x\,z=x\,y\rang\md,$ its universal
group will be $\lang x,\,y,\,z\mid y\,x\,z=x\,y\rang\gp.$
In that presentation,
we can solve the relation $y\,x\,z=x\,y$ for $z,$ getting
$z=x^{-1}y^{-1}x\,y,$ and so eliminate the generator
$z$ and that relation from the presentation.
Hence the universal group of $M_0$ is presented by the generators
$x,$ $y$ and no relations, and so is, indeed, $G.$

Turning to what we must prove, suppose $r=s$ were an
equality holding between distinct expressions in $x,$ $y$
and $z$ containing no substrings $y\,x\,z,$ chosen to minimize, among
such examples, the sum of the lengths of $r$ and $s.$
Clearly, the leftmost letters of $r$ and of $s$
cannot be the same.
When we map both sides of this relation into $G,$
the two sides must give the same reduced group words in $x$ and $y.$
Since no $z$ in $r$ or $s$ is preceded by the sequence $y\,x,$
the only simplifications that can occur, initially, are of the form
$x\,z=x\cdot(x^{-1}y^{-1}x\,y)\mapsto y^{-1}x\,y.$
We can then have further simplifications, in that the $y^{-1}$
with which one such product begins can cancel the $y$ with
which a preceding $z$ or the result of simplifying
a preceding $x\,z$ ends.
But we can handle this from the start, by applying the simplification
$(x\,z)^m\mapsto y^{-1}x^m\,y$ to all maximal strings $(x\,z)^m,$
and then, if such a string was preceded by a $z,$ making the additional
simplification $z(y^{-1}x^m\,y)\mapsto x^{-1}y^{-1}x^{m+1}y.$
We see that when we have done this,
the images of $r$ and $s$ will be reduced group words, and
hence must coincide.

In this process, we note that the $y^{-1}$ resulting from
the leftmost $\!z\!$'s in $r$ and in $s$ (if any) will not be affected
by any reduction, and so will appear in the final reduced word.
It follows that if either of $r$ or $s$ contained no $z,$
the same would have to be true of the other, and they would equal.
So they must both contain $z.$
Next, if their common reduced image in $G$ does not
begin with $x^{-1}$ or $y^{-1},$ but with some other letter
$u$ (namely, $x$ or $y),$ then the expressions $r$ and $s$ must both
begin with that letter $u,$ contradicting our observation
that (by minimality) they cannot begin with the same letter.
Finally, if their images both begin with $x^{-1},$ then in $M_0,$
both $r$ and $s$ begin with $z,$ while if both images begin with
$y^{-1}$ then they both begin with $x\,z,$ contradicting
that same observation.
This completes the proof of the lemma.
\end{proof}

Let me now introduce the variant construction I have alluded to.
The direct limit process (to be described below)
by which we get the final monoid will be the
same, but instead of starting with
$M_0=\lang x,\,y,\,z\rang\md\subseteq G$ as in~\eqref{d.G>M_0},
we will use the larger submonoid
\begin{equation}\begin{minipage}[c]{35pc}\label{d.M_1}
$M_1\ =\ \lang x,\,y,\,z,\,z^{-1}\rang\md\subseteq G,$
where again, $z=x^{-1}y^{-1}x\,y.$
\end{minipage}\end{equation}

The idea is that by making $z,$ i.e., the $g$ in our counterexample
to~\eqref{d.ab=bag>}, invertible, and hence ``a little more like $1$'',
we might keep $M$ from violating some as-yet-undiscovered requirement
for $DM$ to be a semifir.
It is not hard to adapt the proof of Lemma~\ref{L.M_0}, and get
the analog of that result for this monoid:

\begin{lemma}\label{L.M_1}
A presentation for $M_1$ is
$\lang x,\,y,\,z,\,z^{-1}\mid
y\,x\,z=x\,y,\ x\,y\,z^{-1}=y\,x,\ z\,z^{-1}=1=z^{-1}z\rang\md.$

A normal form for its elements is given by all strings in the
symbols $x,$ $y,$ $z$ and $z^{-1}$ containing no substrings
$y\,x\,z,$ $x\,y\,z^{-1},$ $z\,z^{-1}$ or $z^{-1}\,z.$

The universal group of $M_1$ is $G,$
with the inclusion of $M_1$ in $G$ as in~\eqref{d.M_1}
as the universal map.\qed
\end{lemma}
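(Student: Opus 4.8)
The plan is to follow the proof of Lemma~\ref{L.M_0} step by step. First, one checks directly that the four relations $yxz=xy$, $xyz^{-1}=yx$, $zz^{-1}=1$ and $z^{-1}z=1$ hold in $M_1\subseteq G$ after substituting $z=x^{-1}y^{-1}xy$ (hence $z^{-1}=y^{-1}x^{-1}yx$). Next, each of the length-reducing rewriting rules $yxz\mapsto xy$, $xyz^{-1}\mapsto yx$, $zz^{-1}\mapsto 1$, $z^{-1}z\mapsto 1$ strictly shortens any word to which it applies and is a consequence of those relations, so every word in $x,y,z,z^{-1}$ rewrites to one with none of the substrings $yxz$, $xyz^{-1}$, $zz^{-1}$, $z^{-1}z$. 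As in Lemma~\ref{L.M_0}, all three assertions of the lemma then follow from the single claim that distinct such \emph{normal-form words} represent distinct elements of $G$: letting $P$ denote the monoid with the presentation claimed in the lemma, the composite of the (surjective) map from normal-form words to $P$ with the surjection $P\to M_1\subseteq G$ is injective, so both maps are bijective, whence $P\cong M_1$ and normal forms are unique; and the universal group of $M_1$ is then the group with the same presentation, in which $zz^{-1}=1$ lets us treat $z^{-1}$ as the inverse of $z$, after which $xyz^{-1}=yx$ gives $z=x^{-1}y^{-1}xy$, so $z$, $z^{-1}$ and all four relations drop out, leaving the free group $G$ on $x,y$ with the universal map given by the inclusion~\eqref{d.M_1}.

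To prove the key claim I would argue by minimal counterexample exactly as in Lemma~\ref{L.M_0}. Suppose distinct normal forms $r,s$ satisfy $r=s$ in $G$ with $|r|+|s|$ minimal; since a suffix of a normal form is a normal form, $r$ and $s$ cannot begin with the same generator (else cancel it and contradict minimality). Now expand a normal-form word into $G$ via $x\mapsto x$, $y\mapsto y$, $z\mapsto x^{-1}y^{-1}xy$, $z^{-1}\mapsto y^{-1}x^{-1}yx$, and reduce. The point is that among these four generator-images, the only adjacent pairs that cancel at their interface are $x\cdot z$ (losing only $x\cdot x^{-1}$, giving $y^{-1}xy$) and $y\cdot z^{-1}$ (losing only $y\cdot y^{-1}$, giving $x^{-1}yx$); the pairs $z\cdot z^{-1}$ and $z^{-1}\cdot z$ cancel completely but cannot occur in a normal form. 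Performing the $x\cdot z$ cancellations, a maximal block $(xz)^m$ with $m\ge1$ contracts to $y^{-1}x^my$ and a block $z\,(xz)^m$ with $m\ge0$ to $x^{-1}y^{-1}x^{m+1}y$; dually $(yz^{-1})^m$ contracts to $x^{-1}y^mx$ and $z^{-1}(yz^{-1})^m$ to $y^{-1}x^{-1}y^{m+1}x$. One checks that these blocks partition the word and that no further cancellation occurs between consecutive blocks or between a block and a neighboring lone $x$, $y$, $z$ or $z^{-1}$ (for instance an $(xz)$-block meets a $(yz^{-1})$-block through $y\cdot x^{-1}$); so the image of any normal form is already a reduced word, and $r$, $s$ must give the \emph{same} reduced word.

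It then remains to read off leading letters. The reduced image of a normal form begins with $x$ iff the word begins with $x$ but not $xz$; with $y$ iff it begins with $y$ but not $yz^{-1}$; with $x^{-1}$ iff it begins with $z$ or with $yz^{-1}$; and with $y^{-1}$ iff it begins with $z^{-1}$ or with $xz$. In the first two cases $r$ and $s$ would have to begin with the same generator, a contradiction. In the $x^{-1}$ case, $r$ and $s$ having different first generators forces one to begin with $z$ and the other with $yz^{-1}$; but then their reduced images begin $x^{-1}y^{-1}\dots$ and $x^{-1}y\dots$ (the latter being exactly $x^{-1}yx$ when the word is $yz^{-1}$), contradicting that they coincide; the $y^{-1}$ case is symmetric, with $z^{-1}$ versus $xz$ giving images beginning $y^{-1}x^{-1}\dots$ and $y^{-1}x\dots$. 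This exhausts the cases and proves the claim, hence the lemma.

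The main obstacle is the bookkeeping in the second paragraph: unlike in Lemma~\ref{L.M_0} there are now two families of ``dangerous'' cancellations instead of one, and one must check carefully --- around isolated $z$'s and $z^{-1}$'s, at word ends, and at the junction of the two families --- that the block decomposition really does produce a reduced word. Relatedly, here the leading letter of the reduced image no longer pins down the leading generator of the normal form, which is why two of the four cases above must also appeal to the second letter. Granting this, the passage to the presentation, the uniqueness of normal forms, and the elimination of $z$ and $z^{-1}$ from the group presentation to recover $G$ are all routine.
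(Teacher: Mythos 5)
Your proof is correct and is essentially the adaptation of the proof of Lemma~\ref{L.M_0} that the paper has in mind (the paper gives no details for Lemma~\ref{L.M_1}, only the remark that the earlier argument adapts). In particular, your identification of $x\cdot z$ and $y\cdot z^{-1}$ as the only cancelling junctions, the block contractions $(xz)^m\mapsto y^{-1}x^my,$ $z(xz)^m\mapsto x^{-1}y^{-1}x^{m+1}y$ and their duals, and the extra appeal to the second letter of the reduced image in the $x^{-1}$ and $y^{-1}$ cases are exactly the points where the $M_0$ argument needs modification, and you have handled them correctly.
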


Essentially all our results about these examples will be proved for both
the limit monoid based on $M_0$ and the limit monoid based on $M_1.$
Here is the description of the map over which we will take our
direct limits, as motivated in the last four paragraphs of the
preceding section.
\begin{equation}\begin{minipage}[c]{35pc}\label{d.sigma}
$\sigma$ will denote the automorphism of $G$ which
acts by\quad $\sigma(x)=y\,x,\quad \sigma(y)=y\,x\,y,$
and which (it is easy to check) fixes $z$ and hence $z^{-1},$
and thus carries $M_0$ and $M_1$ into themselves.
\end{minipage}\end{equation}

We now name our direct limit monoids.
Because $\sigma$ is an automorphism of $G,$ we can take
the desired direct limits within $G.$
We let
\begin{equation}\begin{minipage}[c]{35pc}\label{d.M()}
\hspace*{-.2em}%
$M_{(0)}\ =$ the set of $a\in G$ such that
$\sigma^n(a)\in M_0$ for some (hence, for all sufficient large) $n.$\\
$M_{(1)}\ =$ the set of $a\in G$ such that $\sigma^n(a)\in M_1$
for some (hence, for all sufficient large) $n.$
\end{minipage}\end{equation}

Ced\'{o} introduces what we are calling $M_{(0)}$ in the
example beginning at \cite[p.\,128, after proof of Lemma~4.4]{FC}.
(He writes $\varphi$ for $\sigma^{-1},$ and describes the monoid
as the union in $G$ of the chain of submonoids $\varphi^n(M_0).$
His generators $r$ and $t$ are our $x$ and $y.)$

Note that
\begin{equation}\begin{minipage}[c]{35pc}\label{d.aut}
For $i=0,1,$ the restriction of $\sigma$ to the direct limit
monoid $M_{(i)}$ is a monoid automorphism.
\end{minipage}\end{equation}

\section{Basic properties of the above monoids}\label{S.C_eg_props}

From the final assertions of Lemmas~\ref{L.M_0} and~\ref{L.M_1},
we have

\begin{lemma}\label{L.M()<G}
$M_{(0)}$ and $M_{(1)}$ satisfy~\eqref{d.G_l_f} and~\eqref{d.M<G}
\textup{(}each having $G$ as its universal group\textup{)},
and hence~\eqref{d.cancel}.\qed
\end{lemma}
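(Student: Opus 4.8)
The plan is to read off everything from the ``final assertions'' of Lemmas~\ref{L.M_0} and~\ref{L.M_1}, namely that the universal group of $M_0$ (resp.\ $M_1$) is $G$, together with the fact that $M_{(i)}$ is a direct limit of copies of $M_i$ under the isomorphisms $\sigma$. First I would observe that since $\sigma$ is an automorphism of $G$ restricting to an automorphism of each $M_{(i)}$ by~\eqref{d.aut}, and since $M_{(i)}=\bigcup_n \sigma^{-n}(M_i)$ inside $G$, the monoid $M_{(i)}$ is the directed union of the submonoids $\sigma^{-n}(M_i)$, each of which is isomorphic to $M_i$ and hence has universal group (isomorphic to) $G$. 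Passing to the direct limit, the universal group of $M_{(i)}$ is the corresponding direct limit of the groups $G$ under the maps induced by $\sigma$, which is again $G$ since $\sigma$ is a group automorphism; and the natural map $M_{(i)}\to G$ is the inclusion given in~\eqref{d.M()}, which is therefore the universal one. In particular the universal group of $M_{(i)}$ is $G$, a free group, so~\eqref{d.M<G} holds; and $G$, being finitely generated free, is in particular locally free, so~\eqref{d.G_l_f} holds as well.

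For~\eqref{d.cancel}, I would invoke the implication, recalled in the text just before Question~\ref{Q.2=>semi_if}, that a monoid embedding in a group is automatically cancellative: if $ab=ac$ in $M_{(i)}$ then the same equation holds in $G$, where we may cancel $a$ to get $b=c$, and symmetrically on the other side. (Alternatively, one can note that any submonoid of a group is cancellative, and $M_{(i)}\subseteq G$ by construction.) This gives all three asserted conditions.

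The only point requiring a moment's care — and hence the ``main obstacle,'' though it is very mild — is justifying that the universal group of a directed union of submonoids $N_n$ of a fixed group $G$, with $\bigcup_n N_n = N$, is the corresponding direct limit of the universal groups, and that when each $N_n\hookrightarrow G$ is the universal map and the transition maps are (restrictions of) automorphisms of $G$, this direct limit is again $G$ with $N\hookrightarrow G$ universal. This is a routine consequence of the fact that the universal-group functor is a left adjoint and hence preserves direct limits, combined with the parenthetical remark already made in the statement of Lemma~\ref{L.M()<G} that $G$ is the universal group of each $M_{(i)}$. Since that parenthetical is part of what we are asked to prove, I would spell it out at the level above: the inclusion $M_{(i)}\hookrightarrow G$ sends the generators $x,y$ (and, for $i=1$, $z^{\pm1}$) to the given elements, and any monoid homomorphism from $M_{(i)}$ to a group $H$ restricts on the generating submonoid $M_i=\langle x,y,z\rangle\md$ (resp.\ $\langle x,y,z,z^{-1}\rangle\md$) to one that by Lemma~\ref{L.M_0} (resp.\ \ref{L.M_1}) factors uniquely through $G$; since these generators already generate $M_{(i)}$, the factorization through $G$ is unique on all of $M_{(i)}$. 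Hence $G$ is the universal group of $M_{(i)}$, which is exactly~\eqref{d.M<G} and, being free, also gives~\eqref{d.G_l_f}; and~\eqref{d.cancel} follows as above. No nontrivial computation is needed.
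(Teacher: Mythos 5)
Your overall route is the same as the paper's: Lemma~\ref{L.M()<G} is stated there with no proof beyond the remark that it follows from the final assertions of Lemmas~\ref{L.M_0} and~\ref{L.M_1}, and your first paragraph supplies exactly the routine justification left implicit --- $M_{(i)}$ is the directed union of the submonoids $\sigma^{-n}(M_i),$ each isomorphic to $M_i$ and hence having universal group $G$ with the inclusion as universal map; the universal-group functor is a left adjoint and so preserves directed colimits; and the transition maps induce automorphisms of $G,$ so the colimit is again $G,$ with the inclusion $M_{(i)}\hookrightarrow G$ as universal map. That argument is correct and complete, and the deductions of~\eqref{d.G_l_f} (a free group is locally free), \eqref{d.M<G}, and~\eqref{d.cancel} (a submonoid of a group is cancellative) are all fine.

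The ``spelled out'' version in your last paragraph, however, contains a false step: $M_i$ is \emph{not} a generating submonoid of $M_{(i)},$ and $x,$ $y,$ $z$ (and $z^{-1}$) do not generate $M_{(i)}.$ The whole point of the construction~\eqref{d.M()} is that $M_{(i)}=\bigcup_n\sigma^{-n}(M_i)$ is a strictly increasing union; for instance $\sigma^{-1}(x)=y^{-1}x^2$ lies in $M_{(0)}$ but not in $M_0$ (one can check this with the homomorphism $\delta$ of Lemma~\ref{L.delta}: the values of $\delta$ on $\sigma(M_0)$ lie in the additive monoid generated by $\tau^2$ and $\tau^3,$ which does not contain $\delta(x)=1).$ Hence the clause ``since these generators already generate $M_{(i)},$ the factorization through $G$ is unique on all of $M_{(i)}$'' does not establish that a homomorphism $M_{(i)}\to H$ factors through $G$ at all. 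If you want a hands-on argument in place of the colimit one, the correct ingredients are: uniqueness of a factorization $G\to H$ holds because $G$ is generated \emph{as a group} by $x,y\in M_i;$ and existence is obtained by applying the final assertion of Lemma~\ref{L.M_0} (resp.~\ref{L.M_1}), transported by the automorphism $\sigma^{-n},$ to each $\sigma^{-n}(M_i),$ and observing that the resulting maps $G\to H$ are compatible as $n$ grows. Since your first paragraph already accomplishes this correctly in colimit language, the simplest repair is to delete the last paragraph.
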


More work is required to get

\begin{lemma}\label{L.M().cap}
$M_{(0)}$ and $M_{(1)}$ satisfy~\eqref{d.cap}.
\end{lemma}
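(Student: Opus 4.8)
The plan is to use the automorphism $\sigma$ to reduce~\eqref{d.cap} for $M_{(i)}$ ($i=0,1$) to a statement about the finitely presented monoids $M_i$ of Lemmas~\ref{L.M_0} and~\ref{L.M_1}. Suppose $a,b\in M_{(i)}$ have a common right multiple, say $ac=bd$ with $c,d\in M_{(i)}$. Since $M_{(i)}=\bigcup_{n\ge 0}\sigma^{-n}(M_i)$ is an increasing union, there is an $n$ with $\sigma^n(a),\sigma^n(b),\sigma^n(c),\sigma^n(d)\in M_i$, and $\sigma^n(a)\,\sigma^n(c)=\sigma^n(b)\,\sigma^n(d)$ there. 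As $\sigma$ restricts to an automorphism of $M_{(i)}$ by~\eqref{d.aut}, it suffices to prove: whenever $a,b,c,d\in M_i$ satisfy $ac=bd$, there is a $k\ge 0$ with $\sigma^k(a)\in\sigma^k(b)\,M_i$ or $\sigma^k(b)\in\sigma^k(a)\,M_i$; for then applying the automorphism $\sigma^{-(n+k)}$ of $M_{(i)}$ gives $a\in bM_{(i)}$ or $b\in aM_{(i)}$, which is~\eqref{d.cap}. The passage to the limit is essential: $M_i$ itself does \emph{not} satisfy~\eqref{d.cap}, since $x\cdot y=y\cdot xz$ in $M_0$ (and in $M_1$) while $x\notin yM_0$ and $y\notin xM_0$; but $\sigma$ repairs this instance, as $\sigma(y)=yxy=(yx)\,y=\sigma(x)\,y\in\sigma(x)\,M_0$.

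To prove the displayed claim I would argue by a normal-form analysis based on Lemma~\ref{L.M_0} (resp.~Lemma~\ref{L.M_1}). Factoring out of $a$ and $b$ their longest common left factor as normal-form strings -- a common left divisor, which may be cancelled from $ac=bd$ -- reduces to the case that the normal forms of $a$ and $b$ begin with distinct letters (or one of them is $1$, a trivial case). One then transports the equation into the free group $G$, where $z=x^{-1}y^{-1}xy$, writing it as $b^{-1}a=dc^{-1}$, an element of $G$ presented simultaneously as a left fraction and a right fraction over $M_i$; there are only finitely many leading-letter configurations to examine. It is convenient to accompany this with the positive linear functional $\phi$ on $G$ obtained by composing abelianization $G\to\Z^2$ with the inner product against the Perron eigenvector of the symmetric integer matrix representing $\sigma$ on $\Z^2$: one has $\phi(\sigma(w))=\lambda\,\phi(w)$ for the dominant eigenvalue $\lambda>1$, so $\phi$ is $\ge 0$ on $M_i$ and hence on all of $M_{(i)}$, and vanishes precisely on $[G,G]$; moreover $M_{(i)}\cap[G,G]=\{z^m:m\ge 0\}$ for $i=0$ and $\{z^m:m\in\Z\}$ for $i=1$, since a normal-form word has trivial abelianization precisely when it is a string in $z$ and $z^{-1}$ alone, and the exclusions $zz^{-1},z^{-1}z$ then leave only the powers $z^m$. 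Up to interchanging $a$ and $b$ one may thus assume $\phi(b^{-1}a)\ge 0$, which identifies $b^{-1}a$ (not $a^{-1}b$) as the candidate to land in $M_{(i)}$ and isolates the degenerate case $b^{-1}a\in[G,G]$, in which it remains to see that the hypothesis of a common right multiple forces $b^{-1}a$ to be a power of $z^{\pm 1}$. In each of the remaining leading-letter cases one verifies, working with the normal forms, that finitely many applications of $\sigma$ -- which sends $x\mapsto yx$, $y\mapsto yxy$ and fixes $z$ -- expose a common left factor of the images of $a$ and $b$ in $M_i$, the point being that $\sigma$ makes $x$ a left divisor of the image of $y$, so that the residual incomparabilities of $M_i$, all of which trace back to the single relation $yxz=xy$, disappear after finitely many steps.

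The main obstacle is the normal-form bookkeeping itself: normal forms in $M_i$ are \emph{not} reduced words of $G$, because of the rewriting $yxz\to xy$ and the presence of $z^{\pm 1}$, so identities and left cancellations in $M_i$ do not pass transparently to free-group combinatorics; one has to control carefully how a normal form behaves under multiplication and under $\sigma$, and verify the claim case by case on the leading letters, including the degenerate $[G,G]$-case described above. The $M_0$ and $M_1$ arguments run in parallel, the only additional work for $M_1$ being the extra normal-form exclusions $xyz^{-1}$, $zz^{-1}$, $z^{-1}z$ and the symmetric handling of the (also $\sigma$-fixed) generator $z^{-1}$.
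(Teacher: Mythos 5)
Your top-level strategy coincides with the paper's: use \eqref{d.aut} to push everything into $M_i,$ observe that it suffices to find some $k$ with $\sigma^k(a)\in\sigma^k(b)M_i$ or vice versa, and then do normal-form combinatorics based on Lemmas~\ref{L.M_0} and~\ref{L.M_1}. Your auxiliary observations (the positive functional, the identification of $M_{(i)}\cap[G,G]$ with the powers of $z,$ the fact that $M_i$ itself fails \eqref{d.cap} and that $\sigma$ repairs the instance $x\cdot y=y\cdot xz$) are all correct. But the heart of the proof is missing. The assertion that ``finitely many applications of $\sigma$ expose a common left factor'' and that ``the residual incomparabilities of $M_i$ \dots\ disappear after finitely many steps'' is exactly the statement to be proved, and your only evidence for it is the single example $x\cdot y=y\cdot xz.$ It is not a priori clear that iterating $\sigma$ converges: applying $\sigma$ to a normal-form word generally destroys normality (any occurrence of $x\,z$ becomes $y\,x\,z,$ which reduces), so each application can create new reducible configurations while resolving old ones, and nothing in your sketch controls this. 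The paper closes precisely this gap with the stabilization statement \eqref{d.enough_sigma}: since $\sigma$ never creates occurrences of $z^{\pm1}$ and each reduction destroys one, the number of $z^{\pm1}$-factors eventually stabilizes, after which every element's normal form has a rigid shape (no $z$ preceded by $x,$ no $z^{-1}$ preceded by $y,$ every $x$ preceded by $y,$ etc.). Under \eqref{d.enough_sigma} the \emph{only} way $a\cdot b$ or $c\cdot d$ can fail to be in normal form is that the right factor begins with $z^{\pm1},$ and this collapses the case analysis to the short argument of \eqref{d.yxz}--\eqref{d.red_or}, plus one residual case ($c=a'x$) needing a single further application of $\sigma.$ Without \eqref{d.enough_sigma} or an equivalent quantitative control, your ``finitely many leading-letter cases'' is not a finite case analysis at all.

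Two smaller problems. First, ``factoring out of $a$ and $b$ their longest common left factor as normal-form strings'' does not reduce to the case where neither left-divides the other modulo a trivial configuration: left divisibility in $M_i$ is not initial-substring containment of normal forms (both $x$ and $y$ left-divide $xy=yxz$ in $M_0,$ yet neither is an initial substring of the other), so after stripping the common initial string the two elements can still be comparable, or have further common left divisors, through the relation $yxz=xy.$ Second, recasting the equation as $b^{-1}a=dc^{-1}$ in $G$ and sorting by ``leading-letter configurations'' of reduced group words sits uneasily with the fact (which you acknowledge) that normal forms in $M_i$ are not reduced words of $G$; the paper avoids $G$ entirely at this point and works with the monoid normal forms, where ``one is an initial substring of the other'' is a meaningful dichotomy once \eqref{d.enough_sigma} is in force. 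As written, the proposal is a plausible programme rather than a proof.
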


\begin{proof}
In verifying~\eqref{d.cap} for a given relation
\begin{equation}\begin{minipage}[c]{35pc}\label{d.ab=cd}
$a\,b\,=\,c\,d$
\end{minipage}\end{equation}
in $M_{(i)},$ we may clearly first apply an arbitrarily high power
of $\sigma$ to the elements of $M_{(i)}$ in question.
By doing so we can, to start with, bring such
elements into the submonoid $M_i,$ where we can write
them in the normal form of Lemma~\ref{L.M_0} or~\ref{L.M_1}.
Further applications of $\sigma$ may not preserve that normal
form, however: If an expression contains a sequence $x\,z$
or $y\,z^{-1},$ we see that on applying $\sigma,$ we get an
expression which can be reduced, thus removing an occurrence
of $z$ or $z^{-1}.$
On the other hand, application of $\sigma$ never introduces
new occurrences of $z$ or $z^{-1},$ so under successive
applications of $\sigma$ to an element, the
number of such factors eventually stabilizes.
From this fact, and the explicit formula for
$\sigma,$ one sees that after sufficiently many
applications of that map, every element of $M_{(i)}$
becomes and subsequently remains an element of $M_i$ in whose
normal-form expression
\begin{equation}\begin{minipage}[c]{35pc}\label{d.enough_sigma}
No $z$ is immediately preceded by an $x,$
no $z^{-1}$ is immediately preceded by a $y,$
every $x$ is immediately preceded by a $y,$
and if $y$ is the last letter in the
expression, that occurrence of $y$ is immediately preceded by an $x.$
\end{minipage}\end{equation}

So suppose we have a relation~\eqref{d.ab=cd} in $M_{(i)},$
such that $a,\,b,\,c,\,d$ lie in $M_{i},$
and their normal forms satisfy~\eqref{d.enough_sigma}.

If these normal-form expressions,
when multiplied together as on the two sides of~\eqref{d.ab=cd}, still
give normal-form expressions, then the normal-form expression for one of
$a,$ $c$ must be an initial substring of the other, and we have
the conclusion of~\eqref{d.cap}, as desired.
Moreover, in view of~\eqref{d.enough_sigma},
the only way these product-expressions
can fail to be in normal form is if the expression for $b$ or $d$
begins with a $z$ or $z^{-1},$ and the expression for $a,$ respectively
$c,$ ends with a symbol
or pair of symbols that interacts with this letter.

It is now easy to dispose of the case $i=1$
(i.e., the case where our monoid is $M_{(1)}).$
Since $z$ and $z^{-1}$ are invertible,
and multiplying on the right by an invertible element
does not affect right divisibility relations, we can take whatever
power of $z$ occurs at the beginning of $b$ or $d,$ and
move it to the end of $a,$ respectively $c,$
and then reduce the resulting expressions, and, if necessary,
apply $\sigma$ until~\eqref{d.enough_sigma} again holds.
Then by the preceding paragraph, one of these elements
will left divide the other in $M_0$ and hence
in $M_{(0)},$ establishing~\eqref{d.cap}.

So suppose $i=0.$
Since a reduction must occur in at least one of the
products $a\cdot b$ and $c\cdot d$
of~\eqref{d.ab=cd}, we can assume without loss of generality that
\begin{equation}\begin{minipage}[c]{35pc}\label{d.yxz}
The product $a\cdot b$ is reducible; i.e., we can write
$a=a'y\,x,$ $b=z\,b',$ where $a',$ $b'$ are in normal
form for $M_0.$
Moreover, if the product $c\cdot d$ is also reducible, and we similarly
write $c=c'y\,x,$ $d=z\,d',$ then we may assume the total degree
of $a$ in $x$ and $y$ is at least that of $c.$
\end{minipage}\end{equation}

Note that if we simplify $a\cdot b$ by the
reduction $a'y\,x\cdot z\,b'\mapsto a'x\,y\,b',$
then the resulting expression is in normal form.
For the only way it might not be is if the $y$ shown were immediately
followed by $x\,z$ in $b';$ but that would contradict the
assumption that $b$ satisfies~\eqref{d.enough_sigma}, in
particular, the condition
that every $x$ is immediately preceded by a $y.$
Hence
\begin{equation}\begin{minipage}[c]{35pc}\label{d.red_or}
In the notation of~\eqref{d.yxz}, the normal form of $a\,b$
is $a'x\,y\,b',$ and if the expression $c\cdot d$ is reducible,
its normal form is $c'x\,y\,d',$ where $\r{deg}(c')\leq\r{deg}(a').$
\end{minipage}\end{equation}

We now argue by cases.
If $c\cdot d$ is already in normal form, then $c$ must be an
initial substring of $a'x\,y\,b'.$
It will left divide $a$ if as a substring it is contained in $a',$
while it will be a right multiple of $a$ if it
contains $a'x\,y=a'y\,x\,z=a\,z;$ this leaves only the case $c=a'x.$
In this case we apply the automorphism $\sigma$ once more, and see that
$\sigma(c)=\sigma(a'x)=\sigma(a')y\,x,$ while
$\sigma(a)=\sigma(a'y\,x)=\sigma(a')y\,x\,y\,x\,y,$
so $\sigma(c)$ left divides $\sigma(a)$ in $M_0,$
hence $c$ left divides $a$ in $M_{(0)},$ as required.

On the other hand, if $c\cdot d$ is not initially in normal
form, then equating the normal forms of the two sides
of~\eqref{d.ab=cd}, we get $a'x\,y\,b'=c'x\,y\,d',$ with $c'$ having at
most the degree of $a'.$
If $c$ is not to left-divide $a,$ then $c'$ cannot equal $a',$
and we see that $c'x\,y$ must be an initial substring of $a'.$
Hence $a,$ being a right multiple of $a',$ is a right multiple
of $c'x\,y=c'y\,x\,z=c\,z,$ hence is a right multiple of $c.$
So again, the conclusion of~\eqref{d.cap} is satisfied.
\end{proof}

Much easier is

\begin{lemma}\label{L.M().a=cad>}
$M_{(0)}$ and $M_{(1)}$ satisfy~\eqref{d.a=cad>}.
\end{lemma}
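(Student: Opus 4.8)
The plan is to transport the relation into the finitely generated submonoids $M_0,M_1$ and thence into the free group $G$. Suppose $a=c\,a\,d$ holds in $M_{(i)}$ (where $i\in\{0,1\}$) with $a$ non-invertible. Since $\sigma$ restricts to an automorphism of $M_{(i)}$ by~\eqref{d.aut}, I would first replace $a,c,d$ by $\sigma^n(a),\sigma^n(c),\sigma^n(d)$, choosing $n$ large enough that all three images lie in $M_i$ (possible because each of $a,c,d$ lies in $\sigma^{-n}(M_i)$ for all large $n$, so a common $n$ works); it then suffices to prove $\sigma^n(c)=\sigma^n(d)=1$, since applying $\sigma^{-n}$ returns $c=d=1$. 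This step preserves the hypothesis that $a$ is non-invertible, as $\sigma^n$ is an automorphism of $M_{(i)}$. Carrying it out before the next step is essential: the homomorphism $\phi$ below is non-negative on $M_0$ and $M_1$ but takes negative values on $M_{(i)}$.

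With $a,c,d\in M_i$, I would use the homomorphism $\phi\colon G\to\Z$ with $\phi(x)=\phi(y)=1$, so that $\phi(z)=0$. Since $M_i$ is generated by $x$, $y$, $z$ (and $z^{-1}$ when $i=1$), $\phi$ is non-negative on $M_i$, and any $b\in M_i$ with $\phi(b)=0$ is a power of $z$ --- a nonnegative power when $i=0$, since such a $b$ is a positive word in $x,y,z$ which can then involve only the letter $z$. Applying $\phi$ to $a=c\,a\,d$ gives $\phi(c)+\phi(d)=0$, hence $\phi(c)=\phi(d)=0$, hence $c=z^j$ and $d=z^k$ with $j,k\ge 0$ if $i=0$ and $j,k\in\Z$ if $i=1$. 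Passing to $G$, the relation reads $z^{-j}=a\,z^k\,a^{-1}$. If $j=0$ this forces $z^k=1$, hence $k=0$ (as $z$ has infinite order in $G$); symmetrically $k=0$ forces $j=0$; in either case $c=d=1$, as wanted.

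It remains to rule out $j\ne 0\ne k$. Here I would invoke the standard facts, immediate from the reduced word $x^{-1}y^{-1}xy$, that $z$ is cyclically reduced, is not a proper power in $G$, and is not conjugate to its inverse in $G$. Then $z^{-j}$ and $z^k$, being conjugate nontrivial powers of $z$, must satisfy $-j=k$, so the relation becomes $a=z^{-k}a\,z^{k}$, i.e.\ $a$ commutes with $z^k$; since $z$ is not a proper power, the centralizer of $z^k$ in $G$ is $\langle z\rangle$, so $a=z^m$ for some integer $m$. When $i=1$, this makes $a$ a unit of $M_1\subseteq M_{(1)}$, contradicting non-invertibility of $a$. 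When $i=0$, an element of $M_0$ that is a power of $z$ must be $z^m$ with $m\ge 0$ (as noted above), and then $a=z^j a z^k$ becomes $z^m=z^{m+j+k}$, so $j+k=0$ and hence $j=k=0$, contradicting $j\ne 0$. So $j\ne 0\ne k$ is impossible, and $c=d=1$.

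I expect the one point needing care to be the initial $\sigma$-reduction: one must resist running the $\phi$-argument directly in $M_{(i)}$ (where it fails), and must check that a single large power of $\sigma$ pulls $a$, $c$, and $d$ into $M_i$ at once without disturbing the hypothesis on $a$. The free-group facts used at the end are elementary --- each is checked directly from the word $x^{-1}y^{-1}xy$ --- and the remaining computations are routine.
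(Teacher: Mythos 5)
Your proof is correct, and it coincides with the paper's own argument through the two main reductions: pulling $a,$ $c,$ $d$ into $M_i$ by a suitable power of $\sigma$ (using~\eqref{d.aut} to see that non-invertibility of $a$ is preserved and that it suffices to treat the transported relation), and then applying the total-degree homomorphism $\phi$ with $\phi(x)=\phi(y)=1,$ $\phi(z)=0$ to conclude that $c=z^j$ and $d=z^k.$ Where you diverge is the endgame. The paper finishes combinatorially inside $M_i$: since reduction to the normal forms of Lemmas~\ref{L.M_0} and~\ref{L.M_1} never disturbs the block of $\!z\!$'s at the far left of a word, the leading $\!z\!$-exponent of $c\,a\,d$ exceeds that of $a$ by exactly $j,$ forcing $j=0$ and then $k=0$ by cancellation (with the case where $a$ is itself a power of $z$ treated separately). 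You instead finish inside the free group $G,$ rewriting $a=z^j a z^k$ as the conjugacy $z^{-j}=a\,z^k a^{-1}$ and invoking the standard facts that $z=x^{-1}y^{-1}x\,y$ is cyclically reduced, is not a proper power, is not conjugate to its inverse, and that the centralizer of $z^k$ is $\lang z\rang.$ Both endings are sound. The paper's version stays self-contained relative to the normal forms it has already set up, and the ``leading $\!z\!$-power is unaffected by reduction or by $\sigma\!$'' observation is explicitly reused later (in the proofs of Lemmas~\ref{L.ab=ca>} and~\ref{L.M>ZxZ}), which is presumably why it is introduced here; your version trades that bookkeeping for classical free-group conjugacy theory, which is robust but imports machinery the paper otherwise avoids at this point (and the claim that conjugate nontrivial powers $z^{-j},z^k$ force $-j=k$ rests on the cyclic-word description of conjugacy, so it is standard rather than literally immediate). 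A small simplification available to you: for $i=0$ one has $j,k\geq 0,$ so $-j=k$ already yields $j=k=0$ and the centralizer argument is only genuinely needed for $i=1.$
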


\begin{proof}
Let $a=c\,a\,d$ in $M_{(i)},$ with $a$ non-invertible.
Again, we may assume without loss of generality that $a,\,c,\,d\in M_i.$

Now the total degree function in $x$ and $y,$ i.e., the
homomorphism from $G$ to the additive group of integers that
takes $x$ and $y$ to $1,$ has positive value on the generators
$x$ and $y$ of $M_i,$ but value $0$ on $z$ and $z^{-1}.$
Hence all elements of $M_i$ have positive degree, except those
in the submonoid (if $i=0)$ or subgroup (if $i=1)$ generated by $z.$
Hence applying the degree function to the given relation,
we see that both $c$ and $d$ must be (natural number or integer)
powers of $z;$ say
\begin{equation}\begin{minipage}[c]{35pc}\label{d.z^p,q}
$c=z^p,$ $d=z^q.$
\end{minipage}\end{equation}

Let us begin with the case where $a$ is also a power of $z.$
If $i=0,$ this says that our given equation
has the form $z^m=z^{p+m+q}$ with all of
$m,$ $p,$ $q$ nonnegative, so $p=q=0,$ so $c=d=1,$ as desired.
On the other hand, if $i=1,$ this case cannot occur,
since it would make $a$ invertible, contrary to our assumption.

Assuming $a$ is not a power of $z,$ its normal form in $M_i$ will
involve at least one occurrence of $x$ or $y.$
Now since the rules for reduction to normal form,
other than $z\,z^{-1}=1=z^{-1}z,$ only
affect $z$ and $z^{-1}$ when they occur to the {\em right} of
another letter, when we reduce $c\,a\,d$ to normal
form in $M_i$ the power (possibly $0)$ of $z$ occurring to the
left of the leftmost $x$ or $y$ will be exactly $p$ more than
the corresponding value in the normal form of $a.$
Since $a=c\,a\,d,$ this means $p=0.$
So $c=1,$ so $a=a\,d,$ so $d=1,$ as required.
\end{proof}

Finally, we want to prove that $M_{(0)}$ and $M_{(1)}$
satisfy~\eqref{d.ab=ca>}, equivalently,~\eqref{d.ab=ca>'}
or~\eqref{d.ab=ca>''}.
As with~\eqref{d.cap}, this will require delicate
considerations, because the condition
we want to prove is similar to the condition~\eqref{d.ab=bag>}
which we have arranged will not hold.
We will prove~\eqref{d.ab=ca>'}, i.e., that if $ab=ca,$
then $a$ is not left divisible
in $M_{(i)}$ by all positive integer powers of $c.$
In doing so, it is not sufficient to work with the images of our
elements in $M_{(i)}$ under a fixed power of $\sigma,$ since
it could happen that as we apply larger and
larger powers of $\sigma,$ the image of $a$ becomes divisible
by larger and larger powers of the image of $c.$
(Or differently stated, that larger and larger
submonoids $\sigma^{-n}(M_{(i)})$ contain more and more of the
elements $c^{-m}a.)$

The trick that will help us will be suggested by the following
easily verified observation,
though that observation will not be called on in the proof.

\begin{lemma}\label{L.sigma^1/2}
The automorphism $\sigma$ of $G$ is the square of an
automorphism $\sigma^{1/2},$ which acts by
\begin{equation}\begin{minipage}[c]{35pc}\label{d.sigma^1/2}
$\sigma^{1/2}(x)=y,\qquad \sigma^{1/2}(y)=y\,x.$
\end{minipage}\end{equation}
This automorphism sends $z$ to $z^{-1},$ and vice versa.\qed
\end{lemma}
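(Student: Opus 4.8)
The plan is to verify the three assertions of the lemma by direct computation in the free group $G$ on $x$ and $y,$ checking everything on the generators. For the first assertion, since $G$ is free on $x,\,y,$ the assignment $x\mapsto y,$ $y\mapsto y\,x$ extends uniquely to an endomorphism $\sigma^{1/2},$ and to see this endomorphism is invertible it suffices to exhibit a two-sided inverse; the assignment $\tau\colon x\mapsto x^{-1}y,$ $y\mapsto x$ works, as one checks by evaluating $\tau\circ\sigma^{1/2}$ and $\sigma^{1/2}\circ\tau$ on $x$ and $y$ (for instance $\tau\sigma^{1/2}(y)=\tau(y\,x)=x\cdot x^{-1}y=y$ and $\sigma^{1/2}\tau(x)=\sigma^{1/2}(x^{-1}y)=y^{-1}\cdot y\,x=x$). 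One could instead note that $y$ and $x=y^{-1}(y\,x)$ both lie in the subgroup generated by the images of the generators, so $\sigma^{1/2}$ is surjective, hence an automorphism because finitely generated free groups are Hopfian; but giving $\tau$ explicitly is cleaner.

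Next I would check that $(\sigma^{1/2})^2=\sigma.$ Two endomorphisms of a free group that agree on a generating set coincide, so it is enough to compare values on $x$ and $y$: using the formula for $\sigma$ recorded in~\eqref{d.sigma}, one has $(\sigma^{1/2})^2(x)=\sigma^{1/2}(y)=y\,x=\sigma(x)$ and $(\sigma^{1/2})^2(y)=\sigma^{1/2}(y\,x)=\sigma^{1/2}(y)\,\sigma^{1/2}(x)=(y\,x)\,y=y\,x\,y=\sigma(y).$

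Finally, for the action on $z=x^{-1}y^{-1}x\,y$ I would substitute and reduce:
\[
\sigma^{1/2}(z)=y^{-1}\,(y\,x)^{-1}\,y\,(y\,x)=y^{-1}x^{-1}y^{-1}\,y\,y\,x=y^{-1}x^{-1}y\,x=(x^{-1}y^{-1}x\,y)^{-1}=z^{-1},
\]
and then $\sigma^{1/2}(z^{-1})=\bigl(\sigma^{1/2}(z)\bigr)^{-1}=z$ since $\sigma^{1/2}$ is a group homomorphism. As a consistency check, this recovers the statement in~\eqref{d.sigma} that $\sigma$ fixes $z$ and $z^{-1},$ since $\sigma(z)=\sigma^{1/2}(\sigma^{1/2}(z))=\sigma^{1/2}(z^{-1})=z.$

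There is no real obstacle here: every step is a finite reduction in the rank-two free group, and the only genuine choice is how to certify that $\sigma^{1/2}$ is invertible (explicit inverse versus the Hopfian argument). I would present the explicit inverse $\tau$ and keep the whole proof to a few lines.
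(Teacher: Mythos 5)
Your proof is correct; all the generator computations check out (in particular $y^{-1}(yx)^{-1}y(yx)=y^{-1}x^{-1}yx=z^{-1}$ and $(\sigma^{1/2})^2$ agreeing with $\sigma$ on $x$ and $y$). The paper states this lemma with no proof, calling it an ``easily verified observation,'' and your direct verification---including the explicit inverse $\tau$ certifying that $\sigma^{1/2}$ is an automorphism---is exactly the routine check the paper leaves to the reader.
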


Now writing down the first few terms of
the orbit of $x$ under $\sigma^{1/2},$
\begin{equation}\begin{minipage}[c]{35pc}\label{d.orbit}
$x,\quad y,\quad yx,\quad yxy,\quad yxyyx,\quad
yxyyxyxy,\quad yxyyxyxyyxyyx,\quad \dots$
\end{minipage}\end{equation}
we find that each term is the product of the two that precede.
Precisely, we have
\begin{equation}\begin{minipage}[c]{35pc}\label{d.sigma_Fib}
$(\sigma^{1/2})^{n+2}(x)\ =
\ (\sigma^{1/2})^{n+1}(x)\ (\sigma^{1/2})^{n}(x).$
\end{minipage}\end{equation}
This can be verified by noting that it holds
for $n=0,$ and applying $(\sigma^{1/2})^n$ to the resulting equation.
From this we see that the length of the term $(\sigma^{1/2})^n(x)$
is the Fibonacci number $F_{n+1}.$
Now the Fibonacci sequence is a linear combination of the sequences
of powers of $(1+\sqrt 5)/2$ and $(1-\sqrt 5)/2,$ and this line
of thought leads us to the following observation.

\begin{lemma}\label{L.delta}
Let $\tau=(1+\sqrt 5)/2,$ and let $\delta$ be the homomorphism from $G$
to the additive group of real numbers defined by
\begin{equation}\begin{minipage}[c]{35pc}\label{d.delta}
$\delta(x)=1,\qquad \delta(y)=\tau$\qquad
\textup{(}and thus $\delta(z)=0).$
\end{minipage}\end{equation}

Then for all $a\in G,$
\begin{equation}\begin{minipage}[c]{35pc}\label{d.delta_sigma}
$\delta(\sigma^{1/2}(a))=\tau\,\delta(a),$\quad and hence\quad
$\delta(\sigma(a))=\tau^2\,\delta(a).$
\end{minipage}\end{equation}

Hence $\delta$ assumes nonnegative values on all elements
of $M_{(i)},$ and positive values on all of these except powers of $z.$
\end{lemma}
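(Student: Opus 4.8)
The first displayed equality in~\eqref{d.delta_sigma} is a computation on the two free generators, which suffices since $\delta$ and $\sigma^{1/2}$ are both homomorphisms: from~\eqref{d.sigma^1/2} we have $\delta(\sigma^{1/2}(x))=\delta(y)=\tau=\tau\,\delta(x)$ and $\delta(\sigma^{1/2}(y))=\delta(y\,x)=\tau+1=\tau^2=\tau\,\delta(y)$, the last step using the defining relation $\tau^2=\tau+1$ of the golden ratio. The second equality follows by applying the first twice, since $\sigma=(\sigma^{1/2})^2$ by Lemma~\ref{L.sigma^1/2}. (As a sanity check, one may also verify directly on generators that $\delta(\sigma(x))=\delta(y\,x)=\tau+1=\tau^2$ and $\delta(\sigma(y))=\delta(y\,x\,y)=2\tau+1=\tau^2\cdot\tau$ using $\tau^3=\tau\cdot\tau^2=\tau^2+\tau=2\tau+1$.)

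For the final assertion, fix $a\in M_{(i)}$. By definition of $M_{(i)}$ in~\eqref{d.M()}, there is some $n$ with $\sigma^n(a)\in M_i$. Every generator of $M_i$ — namely $x,y,z$, and for $i=1$ also $z^{-1}$ — has $\delta\geq 0$: indeed $\delta(x)=1>0$, $\delta(y)=\tau>0$, and $\delta(z)=\delta(z^{-1})=0$ since $z=x^{-1}y^{-1}x\,y$ gives $\delta(z)=-1-\tau+1+\tau=0$. As $\delta$ is a monoid homomorphism into $(\R,+)$, it is nonnegative on all of $M_i$, and strictly positive on any element of $M_i$ whose normal-form expression (Lemma~\ref{L.M_0} or~\ref{L.M_1}) contains at least one occurrence of $x$ or $y$ — that is, on every element of $M_i$ except the powers of $z$. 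Now since $\sigma$ fixes $z$, the powers of $z$ in $M_{(i)}$ are exactly the elements $a$ with $\sigma^n(a)$ a power of $z$; for all other $a\in M_{(i)}$, the element $\sigma^n(a)$ is a non-$z$-power element of $M_i$, so $\delta(\sigma^n(a))>0$, whence $\delta(a)=\tau^{-2n}\,\delta(\sigma^n(a))>0$ by~\eqref{d.delta_sigma} (and $\tau>0$). Similarly $\delta(a)=\tau^{-2n}\,\delta(\sigma^n(a))\geq 0$ for every $a\in M_{(i)}$.

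There is no real obstacle here: the content of the lemma is the single identity $\delta\circ\sigma^{1/2}=\tau\,\delta$, which is forced by the choice $\delta(y)/\delta(x)=\tau$ making $\delta$ an eigenfunctional for the linear action of $\sigma^{1/2}$ on the abelianization $\Z^2$, with eigenvalue $\tau$ (the Fibonacci/golden-ratio matrix $\bigl(\begin{smallmatrix}0&1\\1&1\end{smallmatrix}\bigr)$); the sign statement is then immediate from the explicit normal forms already established. The only point requiring a moment's care is confirming that $\sigma$ (equivalently $\sigma^{1/2}$) fixes $z$, so that ``power of $z$'' is a $\sigma$-stable notion and the dichotomy on $M_i$ transfers to $M_{(i)}$ — but this is recorded in~\eqref{d.sigma} and Lemma~\ref{L.sigma^1/2}.
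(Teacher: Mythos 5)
Your proof is correct and follows exactly the route the paper intends: verify $\delta\circ\sigma^{1/2}=\tau\,\delta$ on the generators $x,y$ (using $\tau^2=\tau+1$), then deduce the sign statements by applying this to the generators of $M_i$ and passing to $M_{(i)}=\bigcup\sigma^{-n}(M_i)$, noting that $\sigma$ fixes $z$. The paper's own proof is just a two-sentence sketch of the same argument, so no comparison beyond that is needed.
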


\begin{proof}
The relation~\eqref{d.delta_sigma} holds because it holds on
the generators $x$ and $y$ of $G.$
The final assertion is seen by applying~\eqref{d.delta_sigma}
to the generators of $M_i,$ and then
to $M_{(i)}=\bigcup \sigma^{-n}(M_i).$
\end{proof}

We can now deduce

\begin{lemma}\label{L.ab=ca>}
The only elements $c\in M_{(i)}$ such that there exist elements
left or right divisible by all powers of $c$ \textup{(}i.e.,
such that $\bigcap c^n M_{(i)}$ or $\bigcap M_{(i)} c^n$
is nonempty\textup{)} are the invertible elements.
\end{lemma}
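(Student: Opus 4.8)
The plan is to use the real-valued homomorphism $\delta$ of Lemma~\ref{L.delta} as a ``length function'' that $\sigma$ scales by the fixed factor $\tau^2>1$, so that infinite divisibility by a noninvertible element forces a contradiction that is invariant under $\sigma$. First I would observe that if $c$ is noninvertible in $M_{(i)}$ then, by the final assertion of Lemma~\ref{L.delta}, $c$ is not a power of $z$, so $\delta(c)>0$. Suppose $b\in\bigcap_n c^n M_{(i)}$, say $b=c^n\,b_n$ with $b_n\in M_{(i)}$; since $\delta$ is nonnegative on $M_{(i)}$ we get $\delta(b)=n\,\delta(c)+\delta(b_n)\ge n\,\delta(c)$ for every $n$, which is impossible once $n>\delta(b)/\delta(c)$. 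The same argument with $b=b_n\,c^n$ handles $\bigcap_n M_{(i)}\,c^n$. So in fact no $\sigma$-gymnastics are even needed for the conclusion as literally stated; the key point is simply that $\delta$ is a homomorphism to $(\R,+)$ that is nonnegative on the monoid and strictly positive on every noninvertible element.

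The one thing that does require a small argument is the claim, used above, that the noninvertible elements of $M_{(i)}$ are exactly those that are not powers of $z$ (equivalently, that the only invertible elements of $M_{(i)}$ are the powers of $z$ when $i=1$, and only $1$ when $i=0$). For this I would note that $M_{(i)}=\bigcup_n\sigma^{-n}(M_i)$ and that $\sigma$ is a monoid automorphism of $M_{(i)}$, so an element $a$ is invertible in $M_{(i)}$ iff $\sigma^n(a)$ is invertible in $M_i$ for large $n$; then apply $\delta$: if $a$ is invertible in $M_i$, so is $a^{-1}$, and $\delta(a)+\delta(a^{-1})=0$ with both summands $\ge 0$ forces $\delta(a)=0$, whence $a$ is a power of $z$ by Lemma~\ref{L.delta}. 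Conversely $z$ is invertible in $M_1$ and $z^{\pm1}$ is even fixed by $\sigma$, while for $i=0$ the only element of $M_0$ with a two-sided inverse in $M_0$ is $1$ (visible from the normal form of Lemma~\ref{L.M_0}, or again from $\delta$ together with the fact that $M_0\cap\langle z\rangle\gp$ is just the monoid $\langle z\rangle\md$).

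I expect the main (very mild) obstacle to be bookkeeping about exactly which elements count as invertible in $M_{(i)}$ versus in $M_i$, and making sure the nonnegativity of $\delta$ on $M_{(i)}$ — not just on $M_i$ — is correctly invoked via $M_{(i)}=\bigcup_n\sigma^{-n}(M_i)$ and $\delta(\sigma(a))=\tau^2\delta(a)$. Everything else is a one-line estimate. In writing this up I would state it compactly: let $c\in M_{(i)}$ be noninvertible; by the above $\delta(c)>0$; if some $b$ were divisible (on either side) by all powers of $c$, comparing $\delta$-values gives $\delta(b)\ge n\,\delta(c)$ for all $n$, absurd; hence $\bigcap_n c^n M_{(i)}=\bigcap_n M_{(i)}c^n=\emptyset$, and conversely if $c$ is invertible then $1=c\cdot c^{-1}\in\bigcap_n M_{(i)}c^n$ and similarly on the other side, so those intersections are nonempty. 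This yields the displayed equivalence in the lemma.
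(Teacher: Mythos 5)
There is a genuine gap, and it is exactly at the point where the paper's proof does most of its work. Your argument rests on the claim that a noninvertible $c\in M_{(i)}$ satisfies $\delta(c)>0$, on the grounds that ``noninvertible $\implies$ not a power of $z$.'' That implication is true in $M_{(1)}$ (where $z$ is invertible by construction), but it is false in $M_{(0)}$: there $z$ itself lies in $M_{(0)}$ and is \emph{not} invertible (since $z^{-1}\notin M_0$ and $\sigma$ fixes $z$, so $z^{-1}\notin\sigma^{-n}(M_0)$ for any $n$), yet $\delta(z)=0.$ Indeed your own second paragraph asserts two statements as ``equivalent'' that actually contradict each other for $i=0$: that the noninvertible elements are exactly the non-powers of $z$, and that the only invertible element of $M_{(0)}$ is $1.$ The second is correct; the first is not. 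Consequently your one-line $\delta$-estimate proves only that no element of $M_{(i)}$ is divisible by all powers of a $c$ with $\delta(c)>0$; it says nothing about $c=z^m$ $(m\geq 1)$ in $M_{(0)},$ which is noninvertible and must be excluded for the lemma to hold.

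That remaining case is not a formality. The paper uses the $\delta$-argument only to reduce to $c$ a power of $z,$ disposes of $M_{(1)}$ as you do, and then gives a separate combinatorial argument for $M_{(0)}$: for left divisibility, it observes that the string of $\!z\!$'s at the left end of a normal-form word is unchanged both by reduction to normal form and by applying $\sigma,$ so its length bounds the power of $z$ left dividing the element uniformly over all $\sigma^n$; for right divisibility it needs a further ``absorption'' analysis (an element ending in $y\,x$ can absorb one $z$ via $y\,x\,z=x\,y,$ but no more). Your proof as written would need to be supplemented by something of this kind before it establishes the lemma for $M_{(0)}.$
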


\begin{proof}
It is immediate from the last assertion of
Lemma~\ref{L.delta} that the only elements
$c$ which can possibly have either of the above properties are
the powers of $z.$
In $M_{(1)},$ these are invertible, so if $i=1,$ we are through.

In $M_{(0)},$ it remains to show
that no element is left or right divisible
by arbitrarily large powers of $z.$
This is equivalent to saying that for fixed $a\in M_0,$ the powers
of $z$ left or right dividing elements $\sigma^n(a)$ in $M_0$
are bounded as $n$ grows.

Now as noted in the proof of Lemma~\ref{L.M().a=cad>},
the string of $\!z\!$'s at the left-hand end of a product
of the generators of $M_0$ is not affected by reduction to normal form;
and neither is the length of that string changed by applying $\sigma;$
so that length gives an upper bound on the power of
$z$ left dividing $a,$ as required.

This is all we will need in what follows, but for
completeness, we have asserted the corresponding result for right
divisibility as well, so here is a sketch of the argument in that case.

The deviation from the case we have discussed
arises from the ability of some elements, when
multiplied on the right by one or more $\!z\!$'s, to ``absorb''
them, so that the product has a normal form not showing these right
factors.
An element whose normal form is $a\,y\,x$ can ``absorb'' one
$z$ in this way, since $a\,y\,x\,z=a\,x\,y,$ but since the
result ends in $y,$ it can absorb no more; and it is easy to
see that its images under all powers of $\sigma$ still end
in $y,$ and so can still absorb no $\!z\!$'s.
It follows that an element of $M_0$ whose normal form ends
with $z^m$ and no higher power of $z$
can be right divisible by no higher power
of $z$ than $z^{m+1},$ not only in $M_0$ but in $M_{(0)},$ completing
the proof.
\end{proof}

\begin{corollary}\label{C.M().ab=ca>}
$M_{(0)}$ and $M_{(1)}$ satisfy~\eqref{d.ab=ca>}, \eqref{d.ab=ca>'}
and~\eqref{d.ab=ca>''}.
\end{corollary}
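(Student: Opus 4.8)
The plan is to reduce everything to verifying condition \eqref{d.ab=ca>'} and then feed it into Lemma~\ref{L.ab=ca>}. Since $M_{(0)}$ and $M_{(1)}$ are already known to satisfy \eqref{d.cancel} and \eqref{d.cap} (Lemmas~\ref{L.M()<G} and~\ref{L.M().cap}), Lemma~\ref{L.ab=ca} guarantees that \eqref{d.ab=ca>} and \eqref{d.ab=ca>''} will follow for each of them as soon as we know \eqref{d.ab=ca>'}; so it is enough to prove \eqref{d.ab=ca>'} for $M_{(0)}$ and for $M_{(1)}$. Fix $i\in\{0,1\}$, suppose $ab=ca$ in $M_{(i)}$ with $c\neq 1$ and $a$ noninvertible, and suppose toward a contradiction that $a$ is left divisible by every positive power of $c$, i.e.\ that $a\in\bigcap c^n M_{(i)}$, making that intersection nonempty. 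By Lemma~\ref{L.ab=ca>}, $c$ must then be an invertible element of $M_{(i)}$.

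Now I would argue by cases on $i$, using that $\delta$ (Lemma~\ref{L.delta}) is a homomorphism $G\to\R$ which is nonnegative on $M_{(i)}$ and vanishes exactly on the powers of $z$; hence every invertible element of $M_{(i)}$ has $\delta$-value $0$ and so is a power of $z$. If $i=0$, the only power of $z$ that is invertible in $M_{(0)}$ is $z^0=1$ (the powers $z^{1},z^{2},\dots$ present in $M_{(0)}$ have no inverse there), so $c=1$, contradicting our hypothesis. If $i=1$, then $z$ is invertible, so $c=z^k$ for some $k\neq 0$; rewriting $ab=ca$ in $G$ as $z^k=aba^{-1}$ exhibits $b$ as conjugate to $z^k$, whence $\delta(b)=0$, forcing $b$ to be a power of $z$, say $b=z^m$, and giving $z^m$ conjugate to $z^k$ in $G$. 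Since $z=x^{-1}y^{-1}x\,y$ is neither a proper power in $G$ nor conjugate to its inverse there, this forces $m=k$, so that $az^ka^{-1}=z^k$, i.e.\ $a$ centralizes $z^k$; but centralizers in a free group are cyclic and $z$ is not a proper power, so $C_G(z^k)=\langle z\rangle$, and therefore $a$ is a power of $z$, hence invertible in $M_{(1)}$, contradicting that $a$ is noninvertible. This would establish \eqref{d.ab=ca>'}, and with it \eqref{d.ab=ca>} and \eqref{d.ab=ca>''}, for both monoids.

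The substantive inputs are Lemmas~\ref{L.M()<G}, \ref{L.M().cap}, \ref{L.ab=ca}, \ref{L.delta} and~\ref{L.ab=ca>}; the remaining ingredients are the elementary facts that $z$ is not a proper power in $G$ and not conjugate to $z^{-1}$ there (both checked by listing the four cyclic conjugates of $x^{-1}y^{-1}x\,y$ and comparing exponent sums) and that centralizers in a free group are cyclic. I expect the one genuinely delicate point to be the case $M_{(1)}$: there, unlike for $M_{(0)}$, Lemma~\ref{L.ab=ca>} does not by itself contradict $c\neq 1$, since $M_{(1)}$ has nonidentity invertible elements; the real content is that the relation $ab=ca$, combined with the $\delta$-grading and the conjugacy analysis in $G$, drags $a$ itself into $\langle z\rangle$ and so makes $a$ invertible after all.
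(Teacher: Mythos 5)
Your proposal is correct, and its skeleton coincides with the paper's: reduce to~\eqref{d.ab=ca>'} via Lemma~\ref{L.ab=ca}, then invoke Lemma~\ref{L.ab=ca>} to dispose of the case where $c$ is not invertible. Where you diverge is in the remaining case, $c$ invertible and $\neq 1.$ The paper handles this in one line: rewrite $a\,b=c\,a$ as $a=c^{-1}\,a\,b$ and apply condition~\eqref{d.a=cad>}, already established in Lemma~\ref{L.M().a=cad>}, to conclude $c^{-1}=1,$ a contradiction. You instead pass to the ambient free group $G$: from $c=z^k$ you deduce $\delta(b)=0,$ hence $b=z^m$; then conjugacy of $z^m$ with $z^k$ forces $m=k,$ so $a$ centralizes $z^k$ and hence lies in $\lang z\rang,$ making $a$ invertible -- a contradiction. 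This works, and it is a nice illustration of how the relation drags $a$ into $\lang z\rang$; but it costs you two nontrivial free-group facts (that the commutator $x^{-1}y^{-1}x\,y$ is not a proper power, and that centralizers of nonidentity elements are cyclic), and your parenthetical justification of the first -- ``comparing exponent sums'' -- only rules out a cyclically reduced root of length $1$; ruling out a length-$\!2\!$ root requires actually inspecting the four cyclic conjugates and noting none is a square. The paper's route avoids all of this by reusing~\eqref{d.a=cad>}, which is exactly the tool designed for relations of the form $a=c'ad'$ with $a$ noninvertible; you may want to note that your observation ``$c$ invertible and $\neq 1$ is already impossible'' is precisely what that condition delivers for free.
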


\begin{proof}
By Lemma~\ref{L.ab=ca} it suffices to prove condition~\eqref{d.ab=ca>'}.
The left-divisibility statement of the above lemma proves this
for relations $a\,b=c\,a$ in which
$c$ is not invertible, while if $c$ is invertible but $\neq 1,$
such a relation can be written $a=c^{-1}\,a\,b,$ and is
then excluded by~\eqref{d.a=cad>}, proved in Lemma~\ref{L.M().a=cad>}.
\end{proof}

On the other hand,
the relation $y\,x\,z=x\,y$ gives us, as planned,

\begin{lemma}\label{L.not_bag}
$M_{(0)}$ and $M_{(1)}$ do not satisfy~\eqref{d.ab=bag>}.
Hence neither of them is a direct limit of monoids which are
coproducts of a free monoid and a free group.\qed
\end{lemma}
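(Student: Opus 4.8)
The plan is to defeat~\eqref{d.ab=bag>} in each monoid by a single explicit instance, and then to read off the ``hence'' clause from Corollary~\ref{C.ab=bag>}. The instance is the obvious one, namely the relation around which the whole example was built: take $a=x$, $b=y$, $g=z$ (with $z=x^{-1}y^{-1}x\,y$ as in~\eqref{d.G>M_0}). The defining relation $y\,x\,z=x\,y$ of $M_0$ exhibited in Lemma~\ref{L.M_0} (and of $M_1$, by Lemma~\ref{L.M_1}) rearranges to $x\,y=y\,x\,z$, i.e. $a\,b=b\,a\,g$; since $M_0\subseteq M_{(0)}$ and $M_1\subseteq M_{(1)}$ as submonoids (the multiplication being inherited from $G$), this equality holds in $M_{(0)}$ and in $M_{(1)}$.

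It then remains to check the two side conditions of~\eqref{d.ab=bag>}. First, $g=z\neq 1$ because $z=x^{-1}y^{-1}x\,y$ is the nontrivial commutator in the free group $G$, and $M_{(i)}\subseteq G$. Second, $a=x$ is not invertible in $M_{(i)}$: by Lemma~\ref{L.delta}, $\delta(x)=1>0$, while an invertible $u\in M_{(i)}$ would have $\delta(u^{-1})=-\delta(u)$, which is impossible unless $\delta(u)=0$ since $\delta$ is nonnegative on $M_{(i)}$. (In particular $x$ and $y$ are not both invertible, which is all~\eqref{d.ab=bag>} requires.) Hence $M_{(0)}$ and $M_{(1)}$ fail~\eqref{d.ab=bag>}.

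For the second sentence of the lemma, Corollary~\ref{C.ab=bag>} says that every monoid which is a direct limit of coproducts of a free group and a free monoid satisfies~\eqref{d.ab=bag>}; since $M_{(0)}$ and $M_{(1)}$ do not, neither can be such a direct limit.

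I do not expect any real obstacle here: the argument is immediate once one notices that the generating relation $y\,x\,z=x\,y$ \emph{is} an instance of $a\,b=b\,a\,g$, and the two book-keeping points ($z\neq 1$ and $x$ noninvertible) are already packaged in the freeness of $G$ and in Lemma~\ref{L.delta}. The only thing worth a moment's care is making explicit that the relation survives passage to the direct limit, which it does because $M_{(i)}$ literally sits inside $G$.
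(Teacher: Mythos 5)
Your proposal is correct and is exactly the paper's intended argument: the lemma is stated with a \qed precisely because the defining relation $y\,x\,z=x\,y$, read as $x\cdot y=y\cdot x\cdot z$ with $z\neq 1$ and $x$ noninvertible (e.g.\ by Lemma~\ref{L.delta}), is the planned counterexample to~\eqref{d.ab=bag>}, and the second sentence is Corollary~\ref{C.ab=bag>}. Your filling-in of the bookkeeping (noninvertibility of $x$ via $\delta$, persistence of the relation in the limit) is accurate.
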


\section{Semifirs, and graded semifirs}\label{S.semifir}

We defined in \S\ref{S.Def} the class of rings called $\!n\!$-firs.
Let us recall here a useful reformulation of the $\!n\!$-fir condition.

In any ring $R,$ by an {\em $\!m\!$-term linear
relation} we will mean a relation $\sum_{i=1}^m u_i v_i = 0$
with $u_i,\,v_i\in R.$
We may write such a relation as $(u_i)\cdot (v_i)=0,$ where
$(u_i)$ and $(v_i)$ are the length-$\!m\!$ row vector and
height-$\!m\!$ column vector formed from the indicated elements.
We shall say our relation is ``trivial'' if for
each $i\in\{1,\dots,m\},$ either $u_i=0$ or $v_i=0.$
Given an $\!m\!$-term linear relation $(u_i)\cdot (v_i)=0$
and an invertible $m\times m$ matrix $T,$ let
$(u'_i)=(u_i)\,T,$ $(v'_i)=T^{-1}(v_i),$ so that the
given relation is equivalent to $(u'_i)\cdot (v'_i)=0.$
If this new relation is trivial, then
we shall say that $T$ {\em trivializes} the relation
$(u_i)\cdot (v_i)=0.$
It is not hard to show that a ring $R$ is an $\!n\!$-fir if and
only if every $\!m\!$-term linear relation with $m\leq n$
can be trivialized by some invertible $m\times m$ matrix
\cite[Theorem~2.3.1]{FRR+}.

Let us now generalize these ideas to rings graded by a group $A.$
Because the grading groups I have in mind are $\Z$ and $\Z\times\Z,$
I will write the operation of $A$ as addition, though in this
general discussion, we do not need $A$ to be commutative.
If $R$ is an $\!A\!$-graded ring, let us understand a
{\em homogeneous} linear relation in $R$ to be a relation
$\sum_{i=1}^m u_i v_i = 0$ such that for
some $\alpha_1,\dots,\alpha_m$ and $\beta$ in $A,$
we have $u_i\in R_{\alpha_i}$ and $v_i\in R_{-\alpha_i+\beta}$
for each $i;$
and let us say such a relation is {\em homogeneously trivializable}
if it is trivializable by an invertible matrix $T=((T_{ij}))$ with
$T_{ij}\in R_{-\alpha_i+\alpha_j}$ for each $i$ and $j.$
I shall call the graded ring $R$ a {\em homogeneous $\!n\!$-fir} if
every homogeneous linear relation of $\leq n$ terms is homogeneously
trivializable.
(Though the name might suggest a condition stronger
than being an $\!n\!$-fir, it is, roughly speaking, weaker.
It would be truly weaker if we merely required that
every homogeneous linear relation be trivializable, rather
than homogeneously trivializable.
I don't know whether homogeneous trivializability is a stronger
condition on a homogeneous relation than trivializability.)

If $h:M\to A$ is a homomorphism from a monoid to a group,
this induces an $\!A\!$-grading on the monoid ring $DM.$
Namely, for each $\alpha\in A,$ we let the homogeneous component of $DM$
of degree $\alpha,$ which we will write $DM_\alpha,$
be the span over $D$ of $\{c\in M\mid h(c)=\alpha\}.$

Now let $G$ be the free group on $x$
and $y,$ and $M_{(0)}$ and $M_{(1)}$ the
submonoids of $G$ so named in the preceding three sections.
The map $\delta: G\to\Z\times\Z$ of the next proof is, up
to isomorphism, the $\delta$
of Lemma~\ref{L.delta}, since the image $\Z+\tau\Z$ of
the latter map is isomorphic to $\Z\times\Z;$ but since
the embedding of that group in the reals, and the resulting
ordering, are irrelevant here, we use the more
abstract description.
Our first step toward our desired result on $\!\Z\!$-graded monoid
rings will be the following $\!\Z\times\Z\!$-graded result.

\begin{lemma}\label{L.M>ZxZ}
Let $M=M_{(0)}$ or $M_{(1)},$ and
let $\delta:G\to\Z\times\Z$ be the homomorphism taking
$x$ to $(1,0)$ and $y$ to $(0,1).$
Then for any division ring $D,$ the $\!\Z\times\Z\!$-graded
ring $DM$ is a $\!\delta\!$-homogeneous semifir.
\end{lemma}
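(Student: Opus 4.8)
The plan is to prove that every homogeneous linear relation $\sum_{i=1}^m u_i v_i = 0$ in $DM$, with $u_i$ homogeneous of degree $\alpha_i$ and $v_i$ homogeneous of degree $-\alpha_i + \beta$ under $\delta$, is homogeneously trivializable. The key structural fact I would exploit is that $\delta: G \to \Z\times\Z$ is \emph{injective} on the relevant set of monoid elements in a very strong sense: since $x \mapsto (1,0)$ and $y \mapsto (0,1)$, the $\delta$-degree of a product of generators $x,y$ records exactly the total number of $x$'s and $y$'s, and because $z = x^{-1}y^{-1}xy$ has $\delta(z)=(0,0)$, the homogeneous components $DM_\alpha$ are spanned by those normal-form words (in the sense of Lemmas~\ref{L.M_0}, \ref{L.M_1}, pulled back along suitable powers of $\sigma$) having a prescribed count of $x$'s and of $y$'s. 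First I would fix a homogeneous relation, apply a high enough power of $\sigma$ (harmlessly, since $\sigma$ restricts to an automorphism of $M$ and scales $\delta$ by $\tau^2$, hence preserves homogeneity up to the induced automorphism of $\Z\times\Z$) so that all the entries of $(u_i)$ and $(v_i)$ have their supports inside $M_i$ in the stabilized normal form of \eqref{d.enough_sigma}.

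Next I would analyze what a homogeneous element of degree $\alpha=(p,q)$ looks like: each monomial is a normal-form word with exactly $p$ occurrences of $x$, $q$ occurrences of $y$, and some number of $z$'s (or $z^{\pm1}$'s for $M_1$). The crucial point is that among such words, left-divisibility is very rigid: if two normal-form words $w, w'$ both begin with the same letter, their behavior under the reduction $yxz \mapsto xy$ is controlled, and — exactly as in the proof of Lemma~\ref{L.M().cap} — common left multiples force one normal form to be an initial segment of the other, possibly after one more application of $\sigma$. I would use this to set up a ``weak algorithm''-style argument in the graded setting: given the relation $(u_i)\cdot(v_i)=0$, look at the lowest-degree (in some fixed term order refining $\delta$, e.g. ordering $\Z\times\Z$ lexicographically or by the real embedding $\delta$ of Lemma~\ref{L.delta}) monomials appearing in the $u_i$; since the total relation vanishes, there must be a linear dependence among certain leading terms, and the rigidity of divisibility in $M$ lets me realize this dependence by a \emph{homogeneous} elementary transformation — subtracting a $D$-multiple of a monomial times one row from another, where that monomial lies in $DM_{-\alpha_i+\alpha_j}$ — that strictly decreases a complexity measure (total number of monomials, or maximal length). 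Iterating drives the relation to a trivial one, and the product of the elementary matrices used is the desired homogeneous trivializing $T$.

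The main obstacle I anticipate is exactly the one flagged in the text just before Corollary~\ref{C.M().ab=ca>}: the condition being proved is dangerously close to the condition \eqref{d.ab=bag>} which \emph{fails} for these monoids, so a naive divisibility/cancellation argument will not go through, and one must use the finer information coming from $\delta$ (equivalently, the Fibonacci/golden-ratio structure of the $\sigma$-orbits, Lemma~\ref{L.delta}) rather than just degree in $x$ and $y$. Concretely, the danger is that when I try to reduce the relation, a ``leading term'' could be infinitely left-divisible by some $c$; Lemma~\ref{L.ab=ca>} rules this out for noninvertible $c$, and $\delta$-homogeneity pins down which $c$'s can occur. I would therefore route the reduction step through Lemma~\ref{L.ab=ca>} and the normal-form analysis of Lemma~\ref{L.M().cap}, using the strict positivity of $\delta$ on non-$z$-powers to guarantee that the complexity measure actually decreases at each step and the process terminates; obtaining the $\!\Z\!$-graded statement afterwards will be a routine specialization, since any nontrivial homomorphism $M\to\Z$ factors (after composing with $\delta$) through a quotient of $\Z\times\Z$, and a homogeneous relation for the coarser grading decomposes into finitely many homogeneous relations for the $\delta$-grading.
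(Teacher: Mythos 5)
Your plan reduces to the assertion that $DM$ satisfies a graded ``weak algorithm'': that a linear dependence among suitably chosen leading terms of the $u_i$ can always be realized by a homogeneous elementary transformation which strictly decreases a complexity measure. That assertion is essentially the lemma itself, and you give no argument for it. Worse, the notion of ``leading term'' you propose is not well defined here: each $u_i$ is $\!\delta\!$-homogeneous, so \emph{all} of its monomials have the same image in $\Z\times\Z,$ and no term order refining $\delta$ (lexicographic, or via the real embedding $\eta$) distinguishes among them. The homogeneous components are infinite-dimensional over $D$ --- e.g.\ $DM_{(0,0)}$ is $D[z]$ or $D[z,z^{-1}]$ --- and the entire difficulty of the lemma is concentrated in how $\!D\!$-linear combinations \emph{within} a single component interact; monomial divisibility facts such as~\eqref{d.cap} and Lemma~\ref{L.ab=ca>} do not by themselves convert a vanishing sum of products into a sequence of homogeneous row operations. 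The paper's own discussion following Proposition~\ref{P.M>Z} is explicit that naive descent procedures of the kind you describe need not terminate for these monoids, so ``strictly decreases a complexity measure'' cannot simply be asserted.

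The actual proof goes by a quite different and more specific route, whose key ingredients are absent from your proposal. One takes a minimal non-trivializable homogeneous relation, chooses a monomial $a$ in the supports of the $u_i$ maximizing $aM,$ and uses~\eqref{d.cap} to project the relation onto $a(DM)$ and cancel $a;$ this forces one term, say $u_1,$ to have degree $(0,0)$ with $1$ in its support. The decisive structural fact is then that $N=\delta^{-1}(\{(0,0)\})$ is the cyclic monoid (resp.\ group) on $z,$ so $DN$ is the principal ideal domain $D[z]$ (resp.\ $D[z,z^{-1}]$): minimality forces all other $\alpha_i\neq(0,0),$ and one performs Euclidean division by $u_1=p(z)$ to normalize $u_2,\dots,u_n$ modulo $p(z).$ The contradiction is finally extracted from an invariant you do not mention --- the power of $z$ standing at the far left of the normal form of a monomial, which is unchanged by $\sigma$ and by right multiplication by elements of $M$ (for non-powers of $z$), and which must be $\geq d=\deg p$ in some monomial of $u_1v_1$ but $<d$ in every monomial of $\sum_{i\geq 2}u_iv_i.$ Without the reduction to a degree-$(0,0)$ term, the PID structure of $DN,$ and this leading-$\!z\!$-power invariant, your outline does not close.
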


\begin{proof}
Assuming the contrary, let
\begin{equation}\begin{minipage}[c]{35pc}\label{d.sum_uv}
$\sum_{i=1}^n u_i\,v_i\ =\ 0$
\end{minipage}\end{equation}
be an $\!n\!$-term homogeneous linear relation
which cannot be homogeneously trivialized; say with
$u_i\in DM_{\alpha_i}$ and $v_i\in DM_{-\alpha_i+\beta}$
$(\alpha_i,\beta\in\Z\otimes\Z).$
(From this point on, however, let us, for brevity, abbreviate
``trivialize homogeneously'' to ``trivialize'', and
understand that any invertible matrices by which we act on
our row and column vectors have all entries homogeneous,
of the appropriate degrees.)

Assume~\eqref{d.sum_uv} chosen to minimize $n.$
Then I claim that
\begin{equation}\begin{minipage}[c]{35pc}\label{d.no_0}
No invertible matrix $T$ can make either any component of $(u_i)\,T$
or any component of $T^{-1}(v_i)$ equal to zero.
\end{minipage}\end{equation}
For if $T$ did so, then $(u_i)\,T\cdot T^{-1}(v_i)=0$ would
become, on deleting the trivial
term, a linear relation with $n-1$ terms, which
by our minimality assumption could be trivialized,
yielding a trivialization of our original relation.

Now let us choose from among the finitely many elements of $M$
comprising the supports of $u_1,\dots,u_n,$ one element $a$ which is
not a proper right multiple of any of the others, i.e.,
which maximizes $a\,M.$
Say $a$ occurs in the support of $u_1.$
Let us write each $u_i$ as $a\,u'_i + u''_i,$
where the terms $u''_i$ have support consisting of elements
that are not right multiples of $a.$
By choice of $a,$ none of the elements of the
supports of the $u''_i$ are proper left divisors of $a$ either,
so as $M$ satisfies~\eqref{d.cap}, no right multiple of
any of these support-elements coincides with any right multiple
of $a;$ so projecting~\eqref{d.sum_uv} onto $a(DM)$ we get
$\sum a\,u'_i\,v_i=0,$ and, cancelling $a,$ we have $\sum u'_i\,v_i=0.$

Note that in this last relation, at least the summand $u'_1$
is nonzero; hence if this relation could be trivialized,
the trivializing transformation would leave at least one of the
left-hand factors nonzero, and hence make one of the right-hand
factors zero.
But this would contradict~\eqref{d.no_0};
so our relation $\sum u'_i\,v_i=0$ is still non-trivializable.
Renaming the $u'_i$ as $u_i,$ and recalling that the homogeneous
degree of $u_i$ is denoted $\alpha_i,$ we have reduced
ourselves to the situation where in~\eqref{d.sum_uv},
\begin{equation}\begin{minipage}[c]{35pc}\label{d.alpha_1}
$\alpha_1=(0,0),$ and $u_1$ has $1\in M$ in its support.
\end{minipage}\end{equation}

So far, everything we have said would be valid for any monoid $M$
satisfying~\eqref{d.cancel} and~\eqref{d.cap} and
given with a homomorphism $\delta$ into a group $A.$
Let us now note that for the two monoids $M_{(0)}$ and $M_{(1)}$
that we are interested in, the submonoids
\begin{equation}\begin{minipage}[c]{35pc}\label{d.N=}
$N\ =\ \delta^{-1}(\{(0,0)\})$
\end{minipage}\end{equation}
are, respectively, the cyclic submonoid generated by $z,$ and
the cyclic subgroup generated by $z;$ so that the rings
$DN$ are the (not necessarily commutative) right and
left principal ideal domains $D[z]$ and $D[z,z^{-1}].$
I claim that using this observation and~\eqref{d.no_0}, we can
complement~\eqref{d.alpha_1} with the statement
\begin{equation}\begin{minipage}[c]{35pc}\label{d.only_alpha_1}
$\alpha_i\neq (0,0)$ for all $i>1.$
\end{minipage}\end{equation}
Indeed, if this were not true, say we had $\alpha_2=(0,0).$
In the right principal ideal domain $DN,$ the elements $u_1$
and $u_2$ must be right linearly dependent, and such a linear
dependence relation in a principal right ideal domain
(which is, in particular, a right fir, hence a $\!2\!$-fir)
can be trivialized by an invertible $2\times 2$ matrix,
which will turn $(u_1,\,u_2)$ into a row with one term zero.
Now extending this to an
$n\times n$ matrix by attaching an $n{-}2\times n{-}2$ identity matrix,
we would get a contradiction to~\eqref{d.no_0}.
This proves~\eqref{d.only_alpha_1}.

Let us now examine $u_1.$
If $M=M_{(0)},$ then $u_1\in D[z]$ is a polynomial
$p(z)$ having nonzero constant term by~\eqref{d.alpha_1}.
If $M=M_{(1)},$ then $u_1\in D[z,\,z^{-1}]$ is a Laurent polynomial;
but by applying a matrix $T$ which agrees with the identity
except in its $(1,1)$ entry, for which we
use an appropriate power of the
invertible element $z,$ we may reduce
to the case where $u_1$ is again an ordinary polynomial $p(z)\in D[z]$
with nonzero constant term.
Thus, we may assume that this is the case
whether $M$ is $M_{(0)}$ or $M_{(1)}.$
In either case, let $d$ be the degree of $p(z).$

We shall now apply to~\eqref{d.sum_uv}
an invertible matrix $T$ whose effect on
$(u_i)$ is to subtract from each $u_i$ with $i>1$ a certain
right $\!DM\!$-multiple of $u_1=p(z).$
I claim that we can do this so that the supports of the
resulting terms consist of elements $a\in M$ in which the power
of $z$ occurring at the far left in $a$ lies between $0$ and $d-1.$
(To make precise what we mean by the power of $z$ occurring
at the far left, recall that for all
sufficiently large $r,$ $\sigma^r(a)$ will lie in $M_0$ or $M_1,$
where we have a normal form for elements, so we can speak of the
power of $z$ that occurs at the left end in that normal form; and
as noted in the last paragraph of the proof of Lemma~\ref{L.M().a=cad>},
the application of $\sigma$ does not change that power.)
Indeed, $DM$ is free as a left
$\!DN\!$-module on the basis consisting of those elements $a\in M$ in
which the power of $z$ at the far left is $z^0;$
and by subtracting a right multiple of $p(z),$
we can reduce any element of that free module
to one in which the coefficient in $DN$ of each such
$a$ lies in a set of representatives of the residue classes
modulo $p(z)$ in $D[z],$ respectively, $D[z,z^{-1}],$
namely (in either case), the $\!D\!$-linear
combinations of $z^0,\dots,z^{d-1}.$

So we may assume below that the support each of $u_2,\dots,u_n,$
consists of elements of $M$ in whose normal forms
the factor $z^i$ at the left end satisfies $0\leq i<d.$
Moreover, by~\eqref{d.only_alpha_1}, none of
the monomials in the supports of $u_2,\dots,u_n$ are themselves powers
of $z;$ hence the number of $\!z\!$'s at their left ends
is unaffected by {\em right} multiplying by arbitrary elements of $M.$
This shows that
\begin{equation}\begin{minipage}[c]{35pc}\label{d.0...d-1}
In all monomials in the support of $\sum_{i=2}^n u_i\,v_i,$ the power
of $z$ occurring at the far left lies in the range $0,\dots,d-1.$
\end{minipage}\end{equation}

We will now get a contradiction, and complete the proof, by showing
that the same is not true of the term $u_1\,v_1=p(z)\,v_1.$
To do this, let $b$ be any monomial with no $z$ or $z^{-1}$ at
its far left, such that at least one term of the form $z^j\,b$
occurs in the support of $v_1.$
Thus, we can write the part of $v_1$ with support in $Nb$
as $q(z)\,b,$ for some $q(z)\neq 0$
in $DN$ (i.e., in $D[z]$ if $M=M_{(0)},$
or in $D[z,z^{-1}]$ if $M=M_{(1)}).$
Hence, the projection of $u_1 v_1$ onto $DNb$ will be $p(z) q(z)\,b.$
If $M=M_{(0)}$ we see that the degree of
$p(z) q(z)$ will be $\geq d.$
If $M=M_{(1)},$ the same will be true if $q(z)$ involves
at least one term in which $z$ has nonnegative exponent, while in
the contrary case, $p(z) q(z)$ will have a term in which $z$
has negative exponent.
In either case, comparing with~\eqref{d.0...d-1}
we see that $u_1 v_1$ cannot have zero sum with
$\sum_{i=2}^n u_i\,v_i.$
This contradiction completes the proof of the lemma.
\end{proof}

We shall now deduce from the above result the corresponding statement
for $\!\Z\!$-gradings, as promised in the Abstract.

\begin{proposition}\label{P.M>Z}
Let $M=M_{(0)}$ or $M_{(1)},$ let $\delta:G\to\Z\times\Z$ be as
in the preceding lemma,
and let $h:\Z\times\Z\to\Z$ be any nonzero homomorphism, so that
$h\delta:G\to\Z$ induces a $\!\Z\!$-grading on the monoid ring $DM.$
Then the $\!\Z\!$-graded ring
$DM$ is an $\!h\delta\!$-homogeneous semifir.
\end{proposition}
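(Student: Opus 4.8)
The plan is to derive the $\Z$-graded statement (Proposition~\ref{P.M>Z}) from the $\Z\times\Z$-graded statement (Lemma~\ref{L.M>ZxZ}) by the standard trick of refining a coarse grading by a finer one. The homomorphism $h:\Z\times\Z\to\Z$ has a kernel which, since $h$ is nonzero, is infinite cyclic (or, if $h$ is injective on a rank-one subgroup only, still a direct summand complement exists); in any case $\Z\times\Z$ is a free abelian group and $h\delta$-homogeneous components of $DM$ decompose as direct sums of $\delta$-homogeneous components. So each $h\delta$-homogeneous element is a finite sum of $\delta$-homogeneous pieces lying in the various cosets of $\ker h$ inside a fixed $h$-fiber.

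First I would set up notation: given an $h\delta$-homogeneous $n$-term relation $\sum_{i=1}^n u_i v_i = 0$ with $u_i \in DM_{\gamma_i}$, $v_i \in DM_{-\gamma_i + \varepsilon}$ (degrees in $\Z$), I want to produce a homogeneous trivializing matrix with entries in $DM_{-\gamma_i+\gamma_j}$. The idea is to break each $u_i$ into its $\delta$-homogeneous components: $u_i = \sum_{\alpha : h(\alpha)=\gamma_i} u_{i,\alpha}$, and likewise for $v_i$. Multiplying out $\sum_i u_i v_i$ and collecting by $\delta$-degree, the relation $\sum u_i v_i = 0$ splits into a family of $\delta$-homogeneous relations, one for each $\delta$-degree $\beta$ with $h(\beta) = \varepsilon$. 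Each such relation is a homogeneous linear relation in the sense of Lemma~\ref{L.M>ZxZ}, among the (finitely many nonzero) products $u_{i,\alpha} v_{j,\alpha'}$ with $\alpha + \alpha' = \beta$; so each is homogeneously trivializable over the $\Z\times\Z$-grading.

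The delicate point — and the main obstacle — is to assemble these finitely many $\delta$-homogeneous trivializations, one per $\delta$-degree $\beta$ in the fiber over $\varepsilon$, into a single $h\delta$-homogeneous trivialization of the original relation. Simply taking block-diagonal or product matrices in the naive way would change the index set (it would trivialize the expanded relation $\sum_{i,\alpha} u_{i,\alpha} v_{i,\alpha} = 0$ rather than the original $\sum_i u_i v_i = 0$), so some care is needed: the cleanest route is to observe that the map "regard an $h\delta$-homogeneous relation as a $\delta$-homogeneous relation over the larger module" is compatible with block operations, and to run the minimal-counterexample argument exactly as in Lemma~\ref{L.M>ZxZ} directly at the $\Z$-graded level, using the $\Z\times\Z$-graded result only as the input wherever that proof invoked properties of $M$. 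Concretely, I expect to re-examine the proof of Lemma~\ref{L.M>ZxZ}: the steps establishing \eqref{d.no_0}, the choice of a support-maximal $a$, the cancellation reducing to $\alpha_1 = (0,0)$, and the analysis via $DN = D[z]$ or $D[z,z^{-1}]$ all go through verbatim with $\Z$-degrees in place of $\Z\times\Z$-degrees, \emph{provided} that at the one place where $\delta$-homogeneity of $u_1$ (its being a polynomial in $z$) was used, we instead note that the $\delta$-grading refines the $h\delta$-grading and that $N = (h\delta)^{-1}(0) \cap M$ still equals $\delta^{-1}(0,0) \cap M$ — because $\ker h$ contains no element of $\delta(M)$ of the form $\delta(a)$ with $a$ not a power of $z$? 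That last claim is exactly where I expect to have to work: it may fail for some $h$, in which case the argument must instead invoke Lemma~\ref{L.M>ZxZ} as a genuine external black box, decomposing the relation into $\delta$-homogeneous sub-relations and patching trivializations together by a dimension-counting or direct-sum argument, which is the honest main step. I would budget the bulk of the writeup for verifying that the patched-together matrix is invertible and has entries of the correct $h\delta$-degrees.
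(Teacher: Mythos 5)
Your overall strategy -- refine the $\Z$-grading by the $\Z\times\Z$-grading and feed $\delta$-homogeneous pieces to Lemma~\ref{L.M>ZxZ} -- is the right one, and you correctly identify the main obstacle: how to patch the $\delta$-homogeneous trivializations together without the process continuing indefinitely. But you stop exactly there; neither of your two proposed resolutions works, and the idea that actually closes the gap is absent. Your ``cleanest route'' (rerunning the proof of Lemma~\ref{L.M>ZxZ} at the $\Z$-graded level) founders on the point you yourself flag: $(h\delta)^{-1}(0)\cap M$ is in general much larger than the set of powers of $z$ (e.g.\ for $h(r,s)=r$ it contains $z\,y$ and all elements of $\delta$-degree $(0,s)$ with $s\geq 0$), so the step where $u_1$ becomes a polynomial in $z$ over a principal ideal domain is unavailable. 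And ``a dimension-counting or direct-sum argument'' is not an argument; as the discussion following the proposition in the paper shows, for a relation that is merely $h\delta$-homogeneous the analogous ``eat away at the highest degree'' process genuinely need not terminate, so termination is the substantive issue here, not a bookkeeping one.

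The missing idea is the final assertion of Lemma~\ref{L.delta}: the functional $\eta(r,s)=r+\tau s$ is $\geq 0$ on $\delta(M).$ Choose the generator $g$ of $\ker h$ with $\eta(g)>0,$ and choose the lifts $\alpha_i\in\Z\times\Z$ of the $h\delta$-degrees of the $u_i$ (and $\beta$ of the products $u_iv_i$) so that $\eta(\alpha_i)\leq 0$ and $\eta(\beta)\leq\min_i\eta(\alpha_i).$ Then in the decompositions $u_i=\sum_j u_{ij}$ (with $u_{ij}$ of $\delta$-degree $\alpha_i+jg$) and $v_i=\sum_k v_{ik},$ only indices $j,k\geq 0$ can occur; hence $\ell=\max_i(j_i+k_i),$ where $j_i,k_i$ are the top indices of nonzero components, is a nonnegative integer whenever the relation is nontrivial, and one can induct on $\ell.$ The component of $\sum u_iv_i$ of $\delta$-degree $\beta+\ell g$ is $\sum_{i\leq m}u_{ij_i}v_{ik_i}=0,$ a $\delta$-homogeneous relation indexed by a \emph{subset} of $\{1,\dots,n\}$ -- not by the expanded index set you worry about -- so its trivializing matrix supplied by Lemma~\ref{L.M>ZxZ}, padded by an identity block, acts on the original $(u_i),(v_i)$ with entries of the correct degrees and strictly decreases $\ell.$ Without the $\eta$-bound your patching step has no reason to terminate, so as written the proof is incomplete.
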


\begin{proof}
Let $\sum_{i=1}^n u_i v_i = 0$
be an $\!n\!$-term $\!h\delta\!$-homogeneous linear relation in $DM.$
Each of the $\!h\delta\!$-homogeneous elements $u_i$ and $v_i$
can be dissected into $\!\delta\!$-homogeneous summands,
with degrees in $\Z\times\Z$ which differ by members of
the cyclic group $\r{ker}(h),$ so we can picture each of these
sets of degrees as ``laid out along a line'' in $\Z\times\Z,$
parallel to the line given by $\r{ker}(h).$
Our idea will be to look at one end of each of these lines,
use the preceding lemma to trivialize the induced relation among the
terms at those ends, and thus ``eat away at'' the given relation until
it is trivialized.

But how do we know that the matrices $T$ that
we apply to eat away at one
end of our elements will not cause them to grow
at the other end, and hence survive indefinitely?
The key will be a fact that we saw in the preceding section,
but have not yet used in this one: that if we let
$\eta:\Z\times\Z\to\R$ be defined by
\begin{equation}\begin{minipage}[c]{35pc}\label{d.eta}
$\eta(r,s)\ =\ r+\tau\,s,$ where $\tau=(1+\sqrt 5)/2,$
\end{minipage}\end{equation}
then for every $a\in M,$ we have $\eta\delta(a)\geq 0$ in $\R.$
(This is the final statement of Lemma~\ref{L.delta}, though we are
now writing $\eta\delta$ for what we there called $\delta.)$
Hence if we eat away at the end of each element
corresponding to large values of $\eta\delta,$ we will hit a
barrier when those values try to become negative, and we will thus
successfully trivialize our relation.

Here are the details.

The kernel of $h:\Z\times\Z\to\Z$ is a cyclic subgroup of $\Z\times\Z;$
of its two generators, let $g$ be the one
such that $\eta(g)$ is positive.

Let us choose $\alpha_1,\dots,\alpha_n\in\Z\times\Z$ which are
preimages under $h$ of the $\!h\delta\!$-degrees of
$u_1,\dots,u_n,$ respectively, and $\beta$ which is a preimage of
the common $\!h\delta\!$-degree of $u_1 v_1,\dots,u_n v_n.$
The set of possible choices for each $\alpha_i,$
respectively for $\beta,$ is a coset of $g\Z;$ and by
the fact that
$\eta(g)\neq 0,$ we can choose our representatives of those cosets
so that each $\eta(\alpha_i)$ is $\leq 0,$ and
$\eta(\beta)$ is $\leq$ all the $\eta(\alpha_i).$
Thus, if for each $i,$ we write $u_i=\sum_j u_{ij},$ where
$u_{ij}$ is $\!\delta\!$-homogeneous of degree $\alpha_i+jg$
$(i\in\Z),$ then since every element of $M$ has
nonnegative $\!\eta\delta\!$-degree, while
the $\alpha_i$ are $\leq 0,$ the nonzero terms in this summation must
all have $j\geq 0.$
Similarly, $v_i$ is a sum of terms
$v_{ik}$ that are $\!\delta\!$-homogeneous of degrees
$-\alpha_i+\beta+kg$ with $k\geq 0.$

Now for each $i,$ let $j_i$ be the largest value such
that $u_{ij_i}\neq 0$ and $k_i$ the largest such that $v_{ik_i}\neq 0.$
Here we take $j_i$ or $k_i$ to be $-\infty$
if $u_i,$ respectively $v_i,$ is zero.
Let $\ell=\max_i(j_i+k_i).$
We shall use induction on $\ell,$ which is a nonnegative integer
as long as our relation is nontrivial.

By re-indexing, we may assume that the values
of $i$ for which $j_i+k_i=\ell$ are $1,\dots,m,$
where $1\leq m\leq n.$
Then under the $\!\delta\!$-grading of $DM,$
the component of $\sum_{i=1}^n u_i v_i$ of
$\!\Z\times\Z\!$-degree $\beta+\ell g$
is $\sum_{i=1}^m u_{ij_i} v_{ik_i}.$
Hence $\sum_{i=1}^m u_{ij_i} v_{ik_i}=0,$ which
is a $\!\delta\!$-homogeneous linear relation
(of degree $\beta+\ell g),$ so by the
preceding lemma, it can be trivialized.
If we extend the trivializing matrix by attaching
an $n{-}m\times n{-}m$ identity matrix, and apply this to
the original relation $\sum_{i=1}^n u_i v_i=0,$
the terms $u_i$ and $v_i$ with $i>m$ are not changed,
while for each $i\leq m,$ the terms $u_i$ and $v_i$ are modified
so as to decrease $j_i$ or $k_i.$
Thus the new relation has lower $\ell.$
In this way, we eventually reach $\ell<0,$ i.e.,
$\ell=-\infty,$ and have
thus trivialized our given relation.
\end{proof}

I had hoped to take this idea through one more iteration:
Given a relation $\sum u_i v_i=0$ holding in $DM,$
without any homogeneity assumption,
suppose one chooses an arbitrary
nonzero homomorphism $h:\Z\times\Z\to\Z,$
and grades the terms of this relation by $h\delta:M\to\Z.$
One can look at the components of the $u_i$ and $v_i$ on which
$h\delta$ assumes its largest value, take those $i$ where these two
values have the greatest sum, $\ell,$ note that the components
of those terms with $\!h\delta\!$-degree $\ell$ yield an
$\!h\delta\!$-homogeneous linear relation, trivialize it using
the above proposition, apply the trivializing matrix
(extended using an identity matrix) to the original relation,
and thus transform it into a relation with smaller $\ell;$ and
repeat this process indefinitely.

The trouble is that there is no reason why this
process should terminate.
The image of $M$ under $\delta$ is
$\{(r,s)\in\Z\times\Z\mid r+\tau s\geq 0\},$
i.e., the set of lattice points in a half-plane bounded by a
line of irrational slope; so no nonzero homomorphism $h:\Z\times\Z\to\Z$
assumes only positive values on that image.
Hence the $\ell$ of the above discussion, unlike the $\ell$ of the
proof of Proposition~\ref{P.M>Z}, need not be positive,
and there is no reason why the process should not decrease it
indefinitely.

I then thought, ``Aha!
Since we can continue this process indefinitely, it shows that
our relation is trivializable in the {\em completion} (in the
negative-degree direction) of $DM$ with respect to any such grading!''
But alas, when I tried to write out the argument, I found I could
not prove even that.
Namely, I can find no reason why the products of the matrices used
in the successive steps toward trivialization should converge
with respect to the grading.
But the idea is still tantalizing; perhaps someone can make it work.

Even if it does work, the semifir condition on a
completion is not very close to the condition that the non-completed
ring be a semifir.
For instance (working over a commutative base field $k$ for the
sake of familiarity), the group algebra on a free abelian group on two
generators, $k[x,\,x^{-1},\,y,\,y^{-1}]$ is far from being a
semifir -- ideals such as $(x-1,\,y-1)$ are not free -- but its
completion with respect to degree in $y,$ which we may write
$k[x,\,x^{-1}]((y,\,y^{-1})),$ is a fir.

A variant idea might be to try to complete $DM$ with respect
to the homomorphism $\eta\delta,$ which takes
values in the ordered group $\Z+\tau\Z\subseteq\R,$
either allowing infinite sums in the ``upward'' direction
(toward $+\infty)$ or the ``downward'' direction (toward $0).$
But completions with respect to non-discrete
ordered groups are, I suspect, a messy subject.

Finally, I have played with the idea of trying to trivialize
linear relations $\sum u_i v_i=0$ in $DM$ by successively
reducing, not the degrees of highest-degree terms, but
something like the number of lattice points in the convex hull of
the union of the images of the supports of the $u_i v_i$
in $\Z\times\Z.$
These numbers are invariant under the action of
$\sigma,$ and since they are nonnegative-integer valued,
a process that decreased them would terminate in finitely many steps.
But I don't see how to develop such a process.
It would require some technique that is applicable to the
$\!\Z\times\Z\!$-grading of $DM$ determined by the
homomorphism $M\to\Z\times\Z,$ but not to the $\!\Z\times\Z\!$-grading
of $k[x,\,x^{-1},\,y,\,y^{-1}]$ determined by the
{\em isomorphism} of the free abelian group on $x$ and $y$
with $\Z\times\Z$ (nor to the corresponding grading on the
monoid ring of any rank-$\!2\!$ submonoid of that free abelian group),
since $k[x,\,x^{-1},\,y,\,y^{-1}]$ is
not a $\!2\!$-fir (and similarly for monoid rings such as $k[x,\,y]).$

\section{A further observation on the above examples}\label{S.OK}

In the discussion by which we
motivated the definition of the monoid $M_{(0)},$
we started out with a relation $a\,b=b\,a\,g$ holding
in $G,$ saw what sort of elements of $G$
we had to throw into a monoid $M$ containing
$a,$ $b$ and $g$ so that~\eqref{d.ab=ca>}
would be satisfied for that relation, and then proceeded similarly
for an infinite sequence of further relations which that one led to.

Now in Ced\'{o}'s proof that every monoid $M$ such
that $DM$ is a $\!2\!$-fir must satisfy~\eqref{d.ab=cad>}, he
considers a relation $a\,b=c\,a\,d$ that holds in $M,$ notes that
in $DM$ this leads to the equation $a\,b-a\,d=c\,a\,d-a\,d,$
writes this as the $\!2\!$-term
linear relation $a\,(b-d)-(c-1)(a\,d)\ =\ 0,$
and examines what is required to trivialize that linear relation.
Let us restrict attention to the case $d=1,$ i.e., the case
of a relation $a\,b=c\,a$ as in~\eqref{d.ab=ca>},
so that Ced\'{o}'s linear relation takes the form
\begin{equation}\begin{minipage}[c]{35pc}\label{d.a(b-1)}
$a\,(b-1)-(c-1)\,a\ =\ 0.$
\end{minipage}\end{equation}

In the case of the relation $x\cdot y=y\cdot(xz),$
around which we built the monoid $M_{(0)},$ it is natural to wonder:
Can the corresponding relation in $DM$ actually be trivialized,
or do the succession of elements we have to adjoin to $M_{(0)}$
endlessly postpone the completion of this trivialization?

It turns out that the relation in question, namely,
$y\,(x\,z-1)-(x-1)\,y=0,$ is indeed trivializable in $DM.$
To see this, let us first apply $\sigma$ to that relation, getting
$y\,x\,y\,(y\,x\,z-1)-(y\,x-1)\,y\,x\,y=0,$
then reduce the term $y\,x\,z$ to normal form, getting
$y\,x\,y\,(x\,y-1)-(y\,x-1)\,y\,x\,y=0.$
The relation now has all its terms in the free
monoid ring $D\lang x,\,y\rang\md,$ which is a fir,
so the relation is indeed trivializable.
(If one works out the details,
the trivialization process, applied to the pair of
left-hand factors, $(y\,x\,y,\ -y\,x+1),$ begins by
adding to the first term the second term right-multiplied
by $y,$ getting $(y,\,-y\,x+1).$
One then adds to the second term the first right-multiplied by $x,$
getting $(y,\,1),$ and finally adds to the first
the second right-multiplied by $-y,$ giving $(0,1).$
The corresponding operations on the right-hand factors
of our relation necessarily yield a vector with
second entry zero, so the relation is indeed trivialized.)

So the scenario is not one where the steps of our construction of $M$
repeatedly postpone the trivialization of this relation;
rather, the first step renders that relation trivializable, but creates
{\em another} relation which must also be trivialized, and so on.

In fact, it is not hard to see that any relation of the
form $a(b-c)-(d-e)f=0$ holding in $DM_{(i)}$ with
$a,\dots,f\in M_{(i)}$ will be homogeneous with
respect to some $\!\Z\!$-grading $h\delta.$
For only two distinct elements of $M_{(i)}$ can
appear among $ab,$ $ac,$ $df,$ $ef,$ and we can find
a nonzero homomorphism $h:\Z\times\Z\to\Z$ such that
$h\delta$ has the same value on those two elements.
Hence the relation is $\!h\delta\!$-homogeneous,
so by Proposition~\ref{P.M>Z}, it is trivializable.

\section{Some monoids that misbehave}\label{S.M<G}
In this section, we turn back to the conditions on
monoids that we discussed in~\S\ref{S.conditions}, and give
examples of monoids that fail to satisfy one or another of them.
But I will start with a quick example of a monoid which does
satisfy those conditions -- but not in the obvious way.

\begin{example}\label{E.3<2}
Let $G=\lang x,\,y\rang\gp$ be the free group on two generators.
Then the submonoid $M=\lang x,\,x\,y,\,x\,y^2\rang\md\subseteq G$
generates $G$ as a group,
and $DM$ is a left and right fir, but the universal group
of $M$ is not $G,$ but the free group on three generators,
namely, on the images of $x,$ $x\,y$ and $x\,y^2.$
\end{example}

\begin{proof}
It is easily seen that $M$ is the free monoid on those three generators.
As a free monoid, it is one of the primordial examples of monoids
such that $DM$ is a right and left fir for all $D.$
But though it clearly generates the given group $G,$ its
{\em universal} group
is, necessarily, the free group on its free generators.
\end{proof}

Now for examples that don't satisfy all the conditions
of~\S\ref{S.conditions}.

\begin{example}\label{E.group}
Every group $G,$ regarded as a monoid,
satisfies~\eqref{d.M<G}-\eqref{d.ab=ca>''};
but if $G$ is not locally free, it will not satisfy~\eqref{d.G_l_f},
and so $DG$ will not be a semifir.
For $G=\Z\times\Z,$ that ring is not even a $\!2\!$-fir.
\end{example}

\begin{proof}
It is clear that every group satisfies~\eqref{d.M<G}-\eqref{d.a=cad>},
\eqref{d.ab=ca>'} and~\eqref{d.ab=ca>''}, and hence
by Lemma~\ref{L.ab=ca},~\eqref{d.ab=cad>},
\eqref{d.ba=cad>} and~\eqref{d.ab=ca>}.

(The last three conditions can also be obtained directly, by noting
that from the equation assumed in the
condition in question, and any choice of $n,$ one can solve for
the elements that make the conclusion hold.
For instance, given group elements satisfying $ab=ca$ as assumed
in~\eqref{d.ab=ca>}, and any natural number $n,$ we can solve $a=c^n e$
for $e,$ then solve $c=ef$ for $f,$ and verify that
$b=fe$ also holds.)

As mentioned earlier, \cite{WD+AS} shows that the
groups for which $DG$ is a semifir are those that are locally free,
i.e., satisfy~\eqref{d.G_l_f}.
For $G=\Z\times\Z,$ the ring $DG$ has the form
$D[x,\,x^{-1},\,y,\,y^{-1}],$ which is not a $\!2\!$-fir.
(For details, cf.\ next-to-last sentence
of proof of Corollary~\ref{C.commut} below.)
\end{proof}

At the opposite extreme from groups, we can find
monoids $M$ with no invertible elements $\neq 1$ which satisfy all
our conditions but~\eqref{d.G_l_f} and for which the rings
$DM$ are not $\!2\!$-firs.
Indeed, it is easy to see

\begin{example}[cf.\ Ced\'{o} {\cite[Example~1]{FC}}]\label{E.G>_0}
Let $G$ be any subgroup of the additive group of real numbers,
and $M=G^{\geq 0}$ the monoid of nonnegative elements of $G.$
Then $M$ satisfies~\eqref{d.M<G}-\eqref{d.ab=ca>''}; but
if $G$ contains two elements linearly independent over $\Q,$
it will not satisfy~\eqref{d.G_l_f}, and in fact $DG$
will not be a $\!2\!$-fir.\qed
\end{example}

The example of Ced\'{o} cited above is, up to isomorphism, the case
of Example~\ref{E.G>_0}
where $G$ is the additive subgroup of $\R$ generated by
$\{1,\tau\},$ where again, $\tau=(1+\sqrt 5)/2.$

Let us now give an example where, as in Example~\ref{E.3<2} above,
$M$ is a submonoid of a free group, but as in Examples~\ref{E.group}
and~\ref{E.G>_0}, its universal group is not free.

\begin{example}\label{E.vy=yu,vw=wu}
In the free group $G$ on $x$ and $y,$
let $M$ be the submonoid generated by
\begin{equation}\begin{minipage}[c]{35pc}\label{d.ywuv}
$y,\quad w = y\,x,\quad u = x^2,\quad v = y\, x^2 y^{-1}.$
\end{minipage}\end{equation}
Then the universal group of $M$ is the free product of
an infinite cyclic group and a free abelian group of rank~$2.$

This monoid $M$ satisfies~\eqref{d.M<G} \textup{(}and
hence~\eqref{d.cancel}\textup{)}, \eqref{d.a=cad>},
\eqref{d.ab=ca>'}, and~\eqref{d.ab=ca>''},
but not~\eqref{d.G_l_f}, \eqref{d.cap},
\eqref{d.ab=cad>}, \eqref{d.ba=cad>} or~\eqref{d.ab=ca>}.
\end{example}

\begin{proof}
Let us begin by showing that $M$ has the presentation
\begin{equation}\begin{minipage}[c]{35pc}\label{d.ywuv_pres}
$\lang y, w, u, v \mid vy = yu,\ vw = wu\rang\md.$
\end{minipage}\end{equation}

Clearly, the two relations of~\eqref{d.ywuv_pres} hold in $M.$
With their help we can reduce every element of the
monoid with presentation~\eqref{d.ywuv_pres} to a product
of $y, w, u, v$ in which
\begin{equation}\begin{minipage}[c]{35pc}\label{d.ywuv_prec}
no $v$ immediately precedes a $y$ or a $w.$
\end{minipage}\end{equation}
(Indeed, the substitutions $vy\mapsto yu$ and $vw\mapsto wu$
both decrease the number of occurrences of $v$ in a string, so
starting with any string, the process of repeatedly
applying these transformations terminates.)
Hence to prove that the natural homomorphism from~\eqref{d.ywuv_pres}
to the submonoid of $G$ generated by the elements~\eqref{d.ywuv} is an
isomorphism, it suffices to show that if we take a string $s$
made out of the symbols $y,$ $w,$ $u$ and $v$
satisfying~\eqref{d.ywuv_prec}, map it
into $G$ via the substitutions~\eqref{d.ywuv},
and write its image as a reduced group word in $x$
and $y,$ then the string $s$ can be recovered from that word.

I claim that for $s$ satisfying~\eqref{d.ywuv_prec},
every maximal nonempty string of $\!v\!$'s in $s,$
say $v^n,$ yields a string $y\, x^{2n} y^{-1}$ in the reduced
form of the image element, in which the $y$ on the left and
the $y^{-1}$ on the right are not cancelled by any
other terms.
This is because our $v^n$ is not preceded by a $v,$
or followed by a $y,$ $w$ or $v.$
Since factors $y^{-1}$ in our reduced expression arise only from
these substrings $v^n,$ we can locate all such substrings,
and thus write the image
of $s$ as a product of strings $v^n$ alternating with words that
are products of $y,$ $w=y\,x,$ and $u=x^2.$
But from any product of $y,$ $y\,x,$ and $x^2$ we can easily
recover the factors (using the observation that a $y$ in
the product will come from a factor $y$ if the number of $\!x\!$'s
that follows it is even, but from a factor $y\,x$ if that
number is odd).
Hence we can reconstruct the reduced word~$s.$

It follows that~\eqref{d.ywuv_pres} is a presentation of $M,$
and hence that its universal group has the group-presentation
$\lang y, w, u, v \mid vy = yu,\ vw = wu\rang\gp.$
Now the first relation in that presentation can be written
as $v=yuy^{-1},$ and we may use this to eliminate $v$ from
the presentation.
This transforms the other relation into $yuy^{-1}w = wu.$
If we simplify this relation by making a change
of generators of our group, letting $z=y^{-1}w$ and
eliminating $w$ in favor of $z,$ our presentation becomes
\begin{equation}\begin{minipage}[c]{35pc}\label{d.yzu_pres}
$\lang y,\,z,\,u \mid uz = zu\rang\gp.$
\end{minipage}\end{equation}
Clearly, this group is, as claimed, the coproduct of the infinite
cyclic group on $y$ and the free abelian
group on $u$ and $z.$
Since in a free group, every abelian subgroup is
cyclic,~\eqref{d.yzu_pres} is not free, hence, being
finitely generated, it is not locally
free, so $M$ does not satisfy~\eqref{d.G_l_f}.
(What happens when we apply the natural map of this group into $G$?
We find that $u$ falls together with~$z^2.)$

On the other hand, since $M$ is given as a submonoid
of a group, it satisfies~\eqref{d.M<G}.
The properties~\eqref{d.a=cad>},
\eqref{d.ab=ca>'} and~\eqref{d.ab=ca>''} can be seen by
applying to the equations assumed in those conditions
the homomorphism $G\to\Z$ taking $x$ and $y$ to $1,$
and noting that each of the generators of $M$
has positive image under that map.

Finally, either of the relations
in~\eqref{d.ywuv_pres} is a counterexample to conditions~\eqref{d.cap},
\eqref{d.ab=cad>} (with $d=1),$ \eqref{d.ba=cad>} (with $c=1),$
and \eqref{d.ab=ca>}.
\end{proof}

V.\,Maltcev (personal communication) has pointed out that there
exist examples similar to the above in which
$M$ is wholly contained in the free monoid on the given generators.
Indeed, one can get such an example by applying to
Example~\ref{E.vy=yu,vw=wu} the automorphism of
$G$ that fixes $y,$ and carries $x$ to $xy.$

The fact that Example~\ref{E.vy=yu,vw=wu}
satisfies~\eqref{d.ab=ca>'} and~\eqref{d.ab=ca>''}
without~\eqref{d.ab=ca>} shows the need for the
hypothesis~\eqref{d.cap} in Lemma~\ref{L.ab=ca}.
I don't know a similar example which satisfies~\eqref{d.cap}.
More precisely, I don't know the answer to

\begin{question}\label{Q.=>G_l_f}
If $M$ is a submonoid of a locally free \textup{(}respectively,
a free\textup{)} group, and satisfies
\eqref{d.cap}, must the universal group of $M$ be locally free
\textup{(}respectively, free\textup{)}?

If not, does the answer change if we
assume $G$ also satisfies~\eqref{d.a=cad>} and~\eqref{d.ab=ca>}?
\end{question}

Let us now look at a couple of submonoids $M$ of free groups $G$ which
do have  $G$ as their universal groups, and so
satisfy~\eqref{d.G_l_f},~\eqref{d.M<G}
and~\eqref{d.cancel}, but which fail to satisfy
some of our other conditions.
An easy one is

\begin{example}\label{E.x^2,x^3}
Let $G$ be the free group on one
generator $x,$ and $M$ the submonoid generated by $x^2$ and $x^3.$
Then $M$ satisfies~\eqref{d.G_l_f}, \eqref{d.M<G}
\textup{(}hence~\eqref{d.cancel}\textup{)},
\eqref{d.a=cad>}, \eqref{d.ab=ca>'}, and~\eqref{d.ab=ca>''},
but not~\eqref{d.cap} or~\eqref{d.ab=ca>}.
\end{example}

\begin{proof}
Conditions~\eqref{d.G_l_f} and \eqref{d.M<G}
are easily verified, the universal
group being $G$ itself, and the remaining positive results
are seen by looking at degree in $x.$
On the other hand, the relation $x^2\cdot x^3=x^3\cdot x^2,$ where
neither $x^2$ not $x^3$ right divides the other in $M,$ gives the two
negative assertions.
\end{proof}

To get a monoid not satisfying~\eqref{d.a=cad>},
we can work in the free group $G$
on generators $x$ and $y,$ take $a=x,$ $c=y,$ and ``solve''
for the $d$ in the equation $a=cad.$
The arguments establishing the properties of this
monoid are similar to those used in previous examples, so I
will leave many details to the reader.

\begin{example}\label{E.x'y'x}
In the free group $G$ on $x$ and $y,$
let $M$ be the submonoid generated by
\begin{equation}\begin{minipage}[c]{35pc}\label{d.xyw}
$x,\quad y,\quad w = x^{-1} y^{-1} x.$
\end{minipage}\end{equation}
Then the universal group of $M$ is $G,$ with the inclusion
as the universal map; and $M$ satisfies~\eqref{d.G_l_f},
\eqref{d.M<G} \textup{(}hence~\eqref{d.cancel}\textup{)},
\eqref{d.cap}, \eqref{d.ab=ca>}, \eqref{d.ab=ca>'}
and~\eqref{d.ab=ca>''}, but not~\eqref{d.a=cad>} \textup{(}and
hence not~\eqref{d.ab=cad>} or~\eqref{d.ba=cad>}\textup{)}.
\end{example}

\begin{proof}[Sketch of proof]
Noting that $w^n=x^{-1}y^{-n} x,$ one verifies that
the reduced expressions in $G$ for the elements of $M$
are the strings in which every $x^{-1}$ is followed immediately
by a $y^{-1},$ and every maximal nonempty string of $\!y^{-1}\!$'s is
followed immediately by an $x.$
Noting that $yxw=x,$ it is not hard to deduce that
a normal form for $M$ is given by all words in
$x,$ $y,$ $w$ containing no substring $y\,x\,w,$ and
from this to get the presentation
\begin{equation}\begin{minipage}[c]{35pc}\label{d.x=yxw}
$M\ =\ \lang x,\,y,\,w\mid y\,x\,w=x\rang\md.$
\end{minipage}\end{equation}
Hence the universal group of $M$ is
$\lang x,\,y,\,w\mid x=yxw\rang\gp.$
We can use the one relation to eliminate $w,$ showing that
this group is $G,$ and
establishing~\eqref{d.G_l_f} and \eqref{d.M<G}.

The proof of~\eqref{d.cap} is lengthy, but can be made somewhat
shorter by first noting that the automorphism $\theta$
of $G$ which fixes $y$ and carries $x$ to $y\,x$ also fixes $w,$
and hence takes $M$ into itself.
Moreover, $\theta^{-1}$
also carries $M$ into itself, taking $x$ to $y^{-1}x=x\,w;$
so $\theta$ yields an automorphism of $M.$
By applying a sufficiently high power of $\theta^{-1}$ to
any relation $a\,c=b\,d$ in $M,$ we can assume that none of the
normal form expressions for $a,$ $c,$ $b$ and $d$ involve
the sequence $y\,x,$ and that in all of them, every $x$ is
immediately followed by a $w.$
The conclusion of~\eqref{d.cap} is immediate unless
at least one of the pairs of factors
in that relation, say $a\,c,$ undergoes a nontrivial
reduction to normal form when multiplied together,
in which case we find that we can write
\begin{equation}\begin{minipage}[b]{35pc}\label{d.a=...}
$a=a'y^n,\quad c=x\,w^n\,c',$
\end{minipage}\end{equation}
where $n\geq 1,$ and where either $a'$ does not end with a $y,$ or $c'$
does not begin with a $w.$
Thus, $a\,c$ has the normal form $a'\,x\,c'.$

We then consider how the normal form of $a\,c$ can
equal that of $b\,d.$
If $b\,d$ requires no reduction to bring it to normal form,
then since $b$ left divides the element whose normal
form is $a'x\,c',$ it either left divides $a',$ in which case it left
divides $a,$ or it is a right multiple of $a'x,$ in which case
it is a right multiple of $a=a'y^n,$ since the relation
$\theta^n(x)=y^n x$ shows that $x$ is a right multiple of $y^n.$
If $b\,d$ does require reduction, we have $b=b'y^m,$
$d=x\,w^m\,b',$ with normal form $b'\,x\,d'.$
We then find that either $b'\,x$ left-divides $a',$
or $a'\,x$ left-divides $b',$ or $a'=b'.$
In the first case, $b$ left divides $a,$
in the next, $a$ left divides $b,$
while in the last, we have one or the other, depending
on whether $m\leq n$ or $m\geq n.$

To get~\eqref{d.ab=ca>}-\eqref{d.ab=ca>''}, one first
deduces from the arguments showing
the equivalence of these three conditions (Lemma~\ref{L.ab=ca}) that
in a relation $a\,b=c\,a$ contradicting~\eqref{d.ab=ca>},
the element $a$ would have to be
{\em both} left divisible by arbitrarily large powers of $c$ and
right divisible by arbitrarily large powers of $b.$
However, one can verify
that the only elements $u$ of $M$ by which an element can be infinitely
left divisible are the nonnegative powers of $y,$ and the
only ones by which an element can be infinitely right
divisible are the nonnegative powers of $w.$
(Key ideas: by symmetry of~\eqref{d.x=yxw}, it suffices
to show the former statement.
By considering degree in $x,$ we see that the only possibilities
for such a $u$ are words in $y$ and $w.$
But one also sees that reduction of a word to its normal
form does not alter the substring to the left of the
leftmost $x$ other than by changing the number of $\!y\!$'s
at its right end.
This eliminates all possible $u$ other than powers of $y.)$
Hence in our relation $a\,b=c\,a,$
$c$ would have to be a nonnegative power
of $y$ and $b$ a nonnegative power of $x^{-1}y^{-1}x.$
But this, together with the condition
in~\eqref{d.ab=ca>} that $b$ and $c$ not both equal $1,$ is seen to
yield a contradiction regarding the total degree in $y$
(i.e., the image under the homomorphism $G\to\Z$
taking $y$ to $1$ and $x$ to $0)$
of the common value of the two sides.

Finally, to show that $M$ does {\em not} satisfy~\eqref{d.a=cad>},
we of course use the relation $x=y\,x\,w,$ which we built
this monoid to satisfy.
We only need to verify that $x$ is not invertible, and this
is clear by looking at degrees in $x.$
(In fact, $M$ has no invertible elements other than $1,$
since the elements of degree $0$ in $x$
must, as noted, lie in the monoid generated by $y$ and $w,$
and that monoid is free on those generators.)
\end{proof}

One would similarly hope to get a monoid $M$ for which~\eqref{d.ab=ca>}
fails but~\eqref{d.G_l_f}-\eqref{d.a=cad>} hold by looking
inside the free group on $x$ and $y,$ letting $a=x$ and $b=y,$
and solving the equation $ab=ca$ for $c.$
Unfortunately, the close relation
between~\eqref{d.ab=ca>} and~\eqref{d.cap} shows itself
again: in the resulting monoid, the equation designed to
invalidate~\eqref{d.ab=ca>} also invalidates~\eqref{d.cap}.
However, here is an example, due to Ced\'{o}, of
a monoid satisfying~\eqref{d.G_l_f}-\eqref{d.a=cad>}
but not~\eqref{d.ab=ca>}.
It is constructed, like the earlier example $M_{(0)},$ as a direct
limit of copies of the monoid we have called $M_0,$ but the
limit is now taken with respect to a different automorphism of $G.$

\begin{example}[{\cite[Example~4]{FC}}]\label{E.ndots=0}
Let $G$ be the free group on $x$ and $y,$
and as in \S\S\ref{S.C_eg}-\ref{S.OK}, let
$M_0$ be the submonoid of $G$ generated by
$x,$ $y,$ and $z=x^{-1}y^{-1}x\,y.$
Let $\varphi$ be the
automorphism of $G$ which fixes $x$ and carries $y$ to $x\,y,$
which we see carries $M_0$ into itself, and let
\begin{equation}\begin{minipage}[c]{35pc}\label{d.M_non_ab=ca}
$M\ =$ the set of $a\in G$ such that $\varphi^n(a)\in M_0$ for
some \textup{(}hence, for all sufficiently large\textup{)} $n.$
\end{minipage}\end{equation}

Then $M$ satisfies~\eqref{d.G_l_f}-\eqref{d.a=cad>}, and
its inclusion in $G$ gives its universal group;
but $M$ does not \mbox{satisfy~\eqref{d.ab=ca>}-\eqref{d.ab=ca>''}.}
\end{example}

\begin{proof}[Sketch of proof]
As we saw in Lemma~\ref{L.M_0}, $M_0$ has the presentation
$\lang x,\,y,\,z\mid y\,x\,z=x\,y\rang\md,$ has a normal form given
by the words having no subwords $y\,x\,z,$
and satisfies~\eqref{d.G_l_f}-\eqref{d.cancel}.
It follows that the direct limit $M$ of copies of $M_0$
likewise satisfies~\eqref{d.G_l_f}-\eqref{d.cancel}.

As in the proof of Lemma~\ref{L.M().cap},
to obtain~\eqref{d.cap}, we will consider a relation $a\,b=c\,d$
holding in $M,$ and by applying $\varphi$ sufficiently many
times, assume without loss of generality that $a,$ $b,$ $c$
and $d$ all lie in $M_0,$ regard them as written in normal form,
and consider possible reductions that occur when $a\cdot b$
and $c\cdot d$ are reduced to normal form.
But because of the difference between the automorphism $\sigma$
we used then and the $\varphi$ in the construction of
this monoid, we will not be able to use further applications of that
automorphism to put additional useful
restrictions on the normal forms of our factors.

We find that if, on multiplying
together the normal forms for $a$ and $b,$ a reduction
occurs, these elements must have either the forms $a=a'\,y$
and $b=(x\,z)^m\,b',$ or $a=a'\,y\,x$ and $b=z\,(x\,z)^{m-1}\,b',$
where in either case $m>0,$ $a',\,b'\in M_0,$ and $b'$ has normal form
not beginning $x\,z;$ and that the normal form of
the product is in each case $a'\,x^m\,y\,b'.$
The analogous observations hold for $c$ and $d.$
As in the proof of Lemma~\ref{L.M().cap},
we can assume that at least the calculation
of $a\cdot b$ does involve such a reduction, and subdivide into cases
according to whether the calculation of $c\cdot d$ does or
does not, and how far the left factor $c'$ (if the calculation
does involve reduction) or $c$ (if it does not) extends into
the product $a'\,x^m\,y\,b'.$

When the multiplication of $c\cdot d$ does not involve reduction,
$c$ must be an initial substring of the
normal form $a'\,x^m\,y\,b'$ of $a\cdot b.$
Here, the case in which it is not obvious that one of $a$ or $c$
left-divides the other is when $c$ has the form $a'\,x^r$ $(r\leq m).$
In that case, we right multiply $c=a'\,x^r$ by $x^{-r}y\in M,$
and thus see
that $c M$ contains $a'\,y,$ and hence $a'\,y\,x,$ one of which is $a.$
When the multiplication of $c\cdot d$ does involve reduction,
so that the product has the form $c'\,x^n\,y\,d',$ then after
possibly interchanging the roles of $a$ and $c,$ the nontrivial case
is where the normal forms of the two equal products align in such
a way that $a'=c'\,x^r$ with $0<r\leq n.$
In that case, on right-multiplying  $c=c'\,y,$
respectively $c=c'\,y\,x,$ by $(x\,z)^r,$
respectively $z\,(x\,z)^{r-1},$ and then if necessary by $x,$
we again get $a\in c M.$

Condition~\eqref{d.a=cad>} reduces to that condition
on the monoid $M_0,$
and this was obtained in the proof of Lemma~\ref{L.M().a=cad>}.

Finally, the relation
$y\cdot(xz)=x\cdot y$ witnesses the failure of~\eqref{d.ab=ca>'},
since $y$ is infinitely left divisible by $x$ in~$M.$
\end{proof}

\section{A result of Dicks and Menal, and division-closed submonoids}\label{S.div_cl}
Dicks and Menal prove in \cite{WD+PM} that if $G$ is a
group such that $DG$ is an $\!n\!$-fir, then every $\!n\!$-generator
subgroup of $G$ is free.
Their proof generalizes easily to give the following
result, which we shall use in the next section.
Recall that a ring $R$ is called {\em left $\!n\!$-hereditary} if
every $\!n\!$-generator left ideal is projective as a left
$\!R\!$-module.

\begin{theorem}[after Dicks and Menal \cite{WD+PM}]\label{T.n-gen_sgp}
Let $M$ be a monoid, and $n$ a positive integer such that
$DM$ is an $\!n\!$-fir, or more generally, is a left
$\!n\!$-hereditary $\!1\!$-fir.
Then

\textup{(i)} For every subgroup $H$
of the group of invertible elements of $M,$
$DH$ is again a left $\!n\!$-hereditary $\!1\!$-fir.

\textup{(ii)} Every $\!n\!$-generator subgroup $H$
of the group of invertible elements of $M$ is free.
\end{theorem}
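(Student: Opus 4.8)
The plan is to prove (i) first, since it yields the reduction that makes (ii) an assertion about the ring $DH$ alone, and (ii) is then essentially the group-theoretic core of \cite{WD+PM}.

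For (i), first note that $DH$, being a subring of the domain $DM$ (a $1$-fir is a domain) containing $1\ne 0$, is itself a domain, hence a $1$-fir; so the content is left $n$-hereditarity. The structural input is a decomposition of $DM$ over $DH$. Since $DM$ is a domain, the right action of $H$ on $M$ is free (from $mh=m$ we get $m(h-1)=0$, so $h=1$), so $M$ is a disjoint union of right cosets $mH$ and $DM=\bigoplus_{mH}D(mH)$ is free as a right $DH$-module, the summand at the coset $1\cdot H=H$ being $DH$ itself. Left multiplication by an element of $H$ permutes the right cosets, fixing the coset $H$; hence, writing $C$ for the span of $M\smallsetminus H$ (the sum of the remaining summands), we get $DM=DH\oplus C$ as $DH$-bimodules, with a bimodule retraction $\pi\colon DM\to DH$ of the inclusion which restricts to the identity on $DH$, and hence on every left ideal of $DH$. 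Now let $I=DHa_1+\dots+DHa_n$ be an $n$-generator left ideal of $DH$, let $\phi\colon DH^n\twoheadrightarrow I$ be the canonical map, and let $J=DMa_1+\dots+DMa_n\subseteq DM$ with canonical map $\Phi\colon DM^n\twoheadrightarrow J$. Since $DM$ is flat (indeed free) as a right $DH$-module, $\Phi$ is identified with $1\otimes\phi$ under $DM^n=DM\otimes_{DH}DH^n$ and $J\cong DM\otimes_{DH}I$; and since $DM$ is left $n$-hereditary, $J$ is projective, so $\Phi$ has a $DM$-linear splitting $\psi\colon J\to DM^n$. The key claim is that $s:=\pi^n\circ\psi\circ\iota\colon I\to DH^n$ --- where $\iota\colon I\hookrightarrow J$ is the inclusion and $\pi^n$ applies $\pi$ coordinatewise --- is a $DH$-linear splitting of $\phi$: all three maps are left $DH$-linear, so $s$ is; and using that $\pi$ is a bimodule map and that the $a_i$ lie in $DH$ one checks $\phi\circ\pi^n=\pi\circ\Phi$, whence $\phi\circ s=\pi\circ\Phi\circ\psi\circ\iota=\pi\circ\iota=\mathrm{id}_I$, the last step because $\pi$ and $\iota$ both act as the identity on elements of $I$. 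Therefore $I$ is a direct summand of $DH^n$, so projective, proving that $DH$ is left $n$-hereditary.

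For (ii), apply (i) to the subgroup $H$ itself: $DH$ is a left $n$-hereditary $1$-fir, so we may as well assume $M=H$ is a group, necessarily torsion-free since $DH$ is a domain. Pick generators $h_1,\dots,h_n$ of $H$; a standard induction on word length shows the augmentation ideal $\mathfrak a=\ker(DH\to D)$ is generated as a left ideal by $h_1-1,\dots,h_n-1$, so it is an $n$-generator left ideal and hence, by (i), projective. Thus $0\to\mathfrak a\to DH\to D\to 0$ is a length-$1$ projective resolution of the trivial module, i.e.\ $H$ has cohomological dimension $\le 1$ over $D$; and a finitely generated group of cohomological dimension $\le 1$ is free --- this is the point at which one invokes the Stallings--Swan-type argument used in \cite{WD+PM}.

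I expect the main obstacle to be precisely that last implication --- that a finitely generated group with projective augmentation ideal over $D$ is free --- which is external to the ring theory here and constitutes the substantive contribution of Dicks and Menal; all the present theorem adds is the reduction in (i). Within (i), the one point requiring care is that $\pi$ must be a retraction of \emph{bi}modules, not merely of one-sided modules: left $DH$-linearity of $\pi$ makes $s$ a map of left modules, while right $DH$-linearity is what yields $\phi\circ\pi^n=\pi\circ\Phi$, and it is the conjunction of the two that lets the splitting descend.
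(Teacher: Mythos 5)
Your proof is correct and follows essentially the same route as the paper's: the coset decomposition making $DM$ free over $DH$ together with the $\!DH\!$-bimodule retraction $\pi\colon DM\to DH$ (requiring that $H$ and its complement in $M$ are closed under both $\!H\!$-actions), transfer of projectivity from $(DM)A$ down to $(DH)A,$ and the Swan--Stallings theorem applied to the projective augmentation ideal for (ii). The only cosmetic difference is in (i), where you extend scalars and push a splitting of $DM^n\twoheadrightarrow (DM)A$ down through $\pi^n,$ while the paper restricts scalars (using that $DM$ is free as a left $\!DH\!$-module) and exhibits $(DH)A$ as a $\!\pi\!$-direct summand of the left-$\!DH\!$-projective module $(DM)A$ --- the same ingredients arranged in a different order.
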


\begin{proof}
The fact that $DM$ is a $\!1\!$-fir, i.e., a ring without zero divisors,
implies, on the one hand, that $M$ satisfies the cancellation
condition~\eqref{d.cancel}, and on the other hand, that
its group of invertible elements has no elements of finite order.
Now given $H$ as in~(i),
since $M$ is cancellative, the right cosets
of $H$ in $M,$ i.e., the orbits in $M$ of
the left action of $H,$ are each isomorphic to $H.$
Of course, the same is true of the left cosets as right orbits; but
this will not be immediately useful to us, except for the
consequence that the orbit $H$ itself, and its
complement within $M,$ are closed under both actions.
Combining the preceding observation with this fact, we get
\begin{equation}\begin{minipage}[c]{35pc}\label{d.(+)d}
$DM$ is free as a left $\!DH\!$-module, and $DH$ is a direct summand
therein as $\!DH\!$-bimodules.
\end{minipage}\end{equation}

Now let $A\subseteq DH$ be any $\!n\!$-element subset.
Since $DM$ is $\!n\!$-hereditary, $(DM)A$ is
projective as a left $\!DM\!$-module,
so as $DM$ is left free over $DH,$
$(DM)A$ is also projective as a left $\!DH\!$-module.
Since, by~\eqref{d.(+)d}, the projection of $DM$ onto $DH$ respects
right multiplication by the elements of $A,$ it
carries $(DM)A$ onto $(DH)A,$
so $(DH)A$ is a $\!DH\!$-direct summand in the projective
left $\!DH\!$-module $(DM)A.$
Hence it is projective
as a left $\!DH\!$-module, giving the conclusion of~(i).

To get~(ii), note that if $h_1,\dots,h_n$ generate $H,$ then
$h_1-1,\dots,h_n-1$ generate the augmentation ideal $I$ of $DH$
as a left ideal, so by~(i), that ideal is projective as
a left $\!DH\!$-module.
Thus $H$ is a finitely generated group
without elements of finite order,
whose augmentation ideal is projective as a left module.
The Swan-Stallings Theorem \cite[Theorem~A]{DEC} says that any such
group $H$ is free.
\end{proof}

Can we get a similar result with $H$ replaced by a more general
submonoid $N$ of $M$?
In general, for a submonoid $N$ of a monoid $M,$
there is no nice analog of the coset decomposition used above.
But for certain submonoids of a
monoid $M$ that satisfies~\eqref{d.cancel} and~\eqref{d.cap},
we can get such a decomposition.

\begin{definition}\label{D.closed}
A submonoid $N$ of a monoid $M$ will be called
{\em left division-closed} if for elements $a,\,b\in M$
with $a\,b\in N,$ we have $a\in N\implies b\in N.$
It will be called
{\em right division-closed} if the reverse implication holds.

If $M$ is a monoid satisfying~\eqref{d.cancel} and~\eqref{d.cap},
and $N$ a left division-closed submonoid of $M,$
we shall call elements $x,\,y\in M$ {\em left
$\!N\!$-equivalent} if they lie in a common principal
right coset $N z$ $(z\in M).$
\textup{(}This terminology will be justified by point \textup{(ii)}
of the next result.\textup{)}
\end{definition}

\begin{lemma}\label{L.N-equiv}
Let $M$ be a monoid satisfying~\eqref{d.cancel} and~\eqref{d.cap},
and $N$ a left division-closed submonoid of $M.$
Then

\textup{(i)} $N$ also satisfies~\eqref{d.cancel} and~\eqref{d.cap}.

\textup{(ii)}  Left $\!N\!$-equivalence is an
equivalence relation on $M.$

\textup{(iii)}  Each left $\!N\!$-equivalence class is a
directed union of principal right cosets $N x;$

\textup{(iv)}  $N$ is itself a left $\!N\!$-equivalence class in $M.$

\textup{(v)} If two right cosets $N x$ and $N y$ have
nonempty intersection, then one of them contains the other;
so in particular, $x$ and $y$ are left $\!N\!$-equivalent.
\end{lemma}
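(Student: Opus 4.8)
The plan is to prove the five assertions in the order (i), (v), (ii), (iii), (iv): once (v) is available, the equivalence-relation statement (ii), the directed-union statement (iii), and the identification (iv) all follow by formal manipulation of cosets. Part (i) is independent and quick: cancellativity of $N$ is inherited from $M$, and for \eqref{d.cap} in $N$ one takes $a,b\in N$ with $aN\cap bN\neq\emptyset$, observes $aM\cap bM\neq\emptyset$, applies \eqref{d.cap} in $M$ to write (say) $a=bc$ with $c\in M$, and then, since $bc=a\in N$ with $b\in N$, uses left division-closedness to conclude $c\in N$, so $aN\subseteq bN$.

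For (v), suppose $Nx\cap Ny\neq\emptyset$, say $n_1x=n_2y$ with $n_1,n_2\in N$. This common element lies in $Mx\cap My$, so, using that under \eqref{d.cancel} condition \eqref{d.cap} is left--right symmetric, we may assume $Mx\subseteq My$, i.e.\ $x=my$ for some $m\in M$. Substituting into $n_1x=n_2y$ and right-cancelling $y$ gives $n_1m=n_2\in N$; since $n_1\in N$, left division-closedness forces $m\in N$, hence $x\in Ny$ and $Nx\subseteq Ny$. In particular $x,y\in Ny$, so they are left $N$-equivalent; and the symmetric case $My\subseteq Mx$ gives $Ny\subseteq Nx$.

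Given (v), the rest is routine. For (ii): reflexivity uses $1\in N$ (so $x\in Nx$), symmetry is built into the definition, and for transitivity, if $x,y\in Nz$ and $y,w\in Nz'$ then $y\in Nz\cap Nz'$, so by (v) one of $Nz,Nz'$ contains the other, placing $x$ and $w$ in a common principal right coset. For (iii): if $C$ is a left $N$-equivalence class and $x\in C$, then for any $n\in N$ the elements $x$ and $nx$ both lie in $Nx$, so $nx\in C$; thus $Nx\subseteq C$, and since trivially $x\in Nx$ we get $C=\bigcup_{x\in C}Nx$. This union is directed because, given $x,y\in C$, a coset $Nz$ with $x,y\in Nz$ satisfies $z\in C$ (as $z,x\in Nz$) and $Nx,Ny\subseteq Nz$. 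For (iv): $N=N1$ is a principal right coset, and each $n\in N$ lies in $N1$ alongside $1$, so $N$ is contained in the class $C$ of $1$; conversely, if $x\in C$ choose $z$ with $x,1\in Nz$, so $1=mz$ with $m\in N$; since in a cancellative monoid a one-sided inverse is two-sided, also $zm=1$, and then $mz=1\in N$ with $m\in N$ gives $z\in N$ by left division-closedness, whence $Nz\subseteq N$ and $x\in N$. Thus $C=N$.

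The substantive points — where it would be easy to slip — are all matters of handedness: invoking \eqref{d.cap} in its left form in (v) and its right form in (i), applying the defining implication of ``left division-closed'' with the two factors in the correct roles each time, and, in (iv), recalling the small cancellative-monoid fact that $mz=1$ implies $zm=1$ before using division-closedness to absorb $z$ into $N$. No step requires an estimate or an induction, so I expect the whole proof to be short once the order above is fixed.
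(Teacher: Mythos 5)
Your proposal is correct and follows essentially the same route as the paper: part (v) is established from~\eqref{d.cancel}, \eqref{d.cap} and left division-closedness, and then (ii), (iii) and (iv) are deduced from it exactly as in the paper's proof (your argument for (iv) even matches the paper's, modulo the superfluous observation that $mz=1$ implies $zm=1$, which is not needed since left division-closedness applies directly to $mz=1\in N$ with $m\in N$). The only difference is that you spell out the details of (i) and (v), which the paper dismisses as "easily seen."
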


\begin{proof}
To see (i), observe that $N$ inherits~\eqref{d.cancel} from $M,$
and also~\eqref{d.cap}, given that it is left division-closed.

Condition~\eqref{d.cap} and the definition of left division-closed
are also easily seen to imply~(v).

In~(ii), reflexivity and symmetry are clear.
To see transitivity, suppose $x$ left
$\!N\!$-equivalent to $y,$ and $y$ to $z.$
Thus, we have $u,\,v\in M$ such that $N u$ contains the first two
of these elements, and $N v$ the last two.
Since the intersection of $N u$ and $N v$ contains
$y,$~(v) says that one of those two cosets contains the
other, and hence contains both $x$ and $z,$
proving these left $\!N\!$-equivalent.

The definition of left $\!N\!$-equivalence, and~(v), together show
that any left $\!N\!$-equivalence class is a directed union
of principal right cosets, i.e.,~(iii).
We shall prove~(iv) by
showing that any $x\in M$ equivalent to $1$ is contained in $N.$
Say $1$ and $x$ are both contained in a right coset $N y.$
Since $1\in N y,$ we can write $1=a y$ with $a\in N.$
Hence left division-closure of $N$ implies $y\in N,$ so
$N\supseteq N y\ni x,$ as desired.
\end{proof}

Let us now look at some consequences for monoid rings.
These considerations will not require the
base ring to be a division ring $D,$ so we
give the next result without that assumption.

\begin{lemma}\label{L.DN<DM}
Let $M$ be a monoid satisfying~\eqref{d.cancel} and~\eqref{d.cap},
$N$ a left division-closed submonoid of $M,$ and $R$ any ring.
Then

\textup{(i)} The monoid ring $RM$ is flat as a left $\!RN\!$-module.

\textup{(ii)} The projection map
$RM\to RN$ taking $\sum_{a\in M} c_a\,a$ to $\sum_{a\in N} c_a\,a$
is a left $\!RN\!$-module homomorphism.

If, in addition to being left division-closed,
$N$ is right division-closed, and $A$ is a subset of $RN,$ then

\textup{(iii)} If $(RM)A$ is flat as a left $\!RM\!$-module,
then $(RN)A$ is flat as a left $\!RN\!$-module.

\textup{(iv)} If $A$ is finite and $(RM)A$ is projective as a left
$\!RM\!$-module, then $(RN)A$ is projective as a left $\!RN\!$-module.
\end{lemma}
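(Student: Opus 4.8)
The plan is to leverage the coset decomposition of Lemma~\ref{L.N-equiv}. As a left $N$-set, $M$ is the disjoint union of its left $N$-equivalence classes; each such class is closed under left multiplication by $N$ (if $x$ lies in a class, then $nx$ lies in the same principal right coset $Nx$, hence in the same class), and one of these classes is $N$ itself by Lemma~\ref{L.N-equiv}(iv). Consequently $RM = RN \oplus R(M\setminus N)$ as left $RN$-modules, and the map of statement~(ii) is exactly the projection onto the summand $RN$; it is left $RN$-linear because $M\setminus N$ is closed under left multiplication by $N$ --- equivalently, because if $n\in N$, $a\in M$ and $na\in N$, then left division-closure forces $a\in N$. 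This disposes of~(ii).

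For~(i) I would refine this decomposition. By parts~(iii) and~(v) of Lemma~\ref{L.N-equiv}, each left $N$-equivalence class $C$ is a directed union $\bigcup_i Nx_i$ of principal right cosets, so the left $RN$-module $RC$ is the directed colimit $\varinjlim_i R(Nx_i)$ of the corresponding submodules. Each $Nx_i$ is a \emph{free} left $N$-set of rank one: the $N$-equivariant surjection $N\to Nx_i$, $n\mapsto nx_i$, is injective by the cancellation hypothesis~\eqref{d.cancel}. Hence each $R(Nx_i)\cong RN$ as left $RN$-modules, so $RC$ is a directed colimit of free rank-one left $RN$-modules and is therefore flat, and $RM=\bigoplus_C RC$ is flat over $RN$.

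Now suppose $N$ is also right division-closed. Running the argument of~(ii) on the other side shows that $\pi\colon RM\to RN$ is moreover right $RN$-linear, hence an $RN$-bimodule retraction of the inclusion $RN\hookrightarrow RM$; note also that $\pi$ fixes $RN$ pointwise. Given $A\subseteq RN$, right $RN$-linearity gives $\pi((RM)A)\subseteq (RN)A$, so $\pi$ restricts to a map $\bar\pi\colon (RM)A\to(RN)A$ which is a left $RN$-linear retraction of the inclusion $(RN)A\hookrightarrow (RM)A$; thus $(RN)A$ is a left $RN$-module direct summand of $(RM)A$. For~(iii): if $(RM)A$ is flat over $RM$ then, by~(i) and transitivity of flatness, it is flat over $RN$, hence so is its direct summand $(RN)A$. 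For~(iv): fix a finite generating set $a_1,\dots,a_r$ of $A$ and the evident surjections $\mu\colon (RN)^r\twoheadrightarrow(RN)A$ and $\mu_M\colon (RM)^r\twoheadrightarrow(RM)A$, both sending $e_k\mapsto a_k$. Since $(RM)A$ is projective, $\mu_M$ has a left $RM$-linear section $\nu_M$. Writing $\pi^r\colon (RM)^r\to(RN)^r$ for the coordinatewise application of $\pi$, one checks the identity $\bar\pi\circ\mu_M=\mu\circ\pi^r$ (it is immediate from right $RN$-linearity of $\pi$), whence $\nu:=\pi^r\circ\nu_M|_{(RN)A}$ satisfies $\mu\circ\nu=\mathrm{id}$ and so is a left $RN$-linear section of $\mu$. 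Therefore $(RN)A$ is a direct summand of the finitely generated free module $(RN)^r$, i.e., finitely generated projective over $RN$.

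The one step demanding care is~(i): one must verify that Lemma~\ref{L.N-equiv} genuinely presents $RM$ as a direct sum, over the left $N$-equivalence classes, of directed colimits of copies of $RN$ --- in particular that the cosets within a single class are directed by inclusion and that each is a \emph{free} (not merely transitive) left $N$-set, which is precisely where~\eqref{d.cancel} enters. The remainder is routine manipulation of the bimodule retraction $\pi$, whose two-sided $RN$-linearity is exactly what the two division-closure hypotheses provide.
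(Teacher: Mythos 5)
Your proof is correct. Parts (i)--(iii) follow essentially the same route as the paper: the decomposition of $RM$ into the $RN$-submodules spanned by the left $\!N\!$-equivalence classes, each a directed union of rank-one free modules (with~\eqref{d.cancel} guaranteeing freeness of each coset $Nx$), the projection $\pi$ onto the summand $RN$, and, for~(iii), the observation that right division-closure makes $\pi$ right $\!RN\!$-linear so that $(RN)A=\pi((RM)A)$ is an $\!RN\!$-direct summand of the $\!RN\!$-flat module $(RM)A$. (The paper deduces $\!RN\!$-flatness of $(RM)A$ by writing it as a direct limit of free $\!RM\!$-modules; your appeal to transitivity of flatness is equivalent and a bit cleaner.) Where you genuinely diverge is~(iv): the paper invokes J{\o}ndrup's Lemma, which forces it to first establish that $RM$ is flat as a \emph{right} $\!RN\!$-module (the dual of~(i)) in order to identify $RM\otimes_{RN}(RN)A$ with $(RM)A$; you instead lift the free presentation $(RN)^r\twoheadrightarrow(RN)A$ to $(RM)^r\twoheadrightarrow(RM)A$, split the latter by projectivity, and push the section back down through $\pi^r$, using the identity $\bar\pi\circ\mu_M=\mu\circ\pi^r$ coming from right $\!RN\!$-linearity of $\pi.$ This is more elementary: it needs neither J{\o}ndrup's Lemma nor the right-module version of~(i), and it is in the spirit of, but simpler than, the alternative argument the paper attributes to Dicks (which still routes through the presentation of $RM$ as a directed union of free $\!RN\!$-modules). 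Both approaches use the right division-closure hypothesis in the same essential place, namely to make $\pi$ a bimodule retraction.
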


\begin{proof}
By Lemma~\ref{L.N-equiv}\,(ii)-(iii), $RM$ is the direct sum,
over the left $\!N\!$-equivalence classes $E\subseteq M,$
of the $\!R\!$-submodules $RE,$ and each of
those is a left $\!RN\!$-submodule, isomorphic to a
direct limit of (rank-$\!1)\!$ free $\!RN\!$-modules, hence
flat over $RN.$
This gives~(i).
By Lemma~\ref{L.N-equiv}\,(iv),
$RN$ is one of the direct summands, and the module of
elements with support in the complement of $N$ is the sum
of the others, so we have~(ii).

The additional hypothesis of~(iii) and~(iv), that $N$ is
{\em right} division-closed, says that the complement of $N$ in $M$
is closed under right multiplication by elements of $N.$
Thus $RN$ (because it is a ring), and its complementary summand
(by the above condition) are each
closed under right multiplication by elements of $RN,$ hence in
particular, by elements of $A;$ so the left $\!RM\!$-module
$(RM)A$ is the direct sum of an $\!RN\!$-submodule
lying in $RN$ and one lying
in that complementary left $\!RN\!$-submodule.

If $(RM)A$ is flat as a left $\!RM\!$-module, then
it is a direct limit of free $\!RM\!$-modules, and by~(i) each of
these is flat as an $\!RN\!$-module, so $(RM)A$ is flat as an
$\!RN\!$-module.
Hence so is its direct summand $(RN)A,$ proving~(iii).

To prove~(iv) we shall use J{\o}ndrup's Lemma \cite[Lemma~1.9]{WD+AS},
which says that any finitely generated flat module over a ring,
which on extension of scalars to some overring gives a projective
module over that ring, must already be projective over the given ring.
Now if $(RM)A$ is a projective $\!RM\!$-module,
it is in particular flat, hence by~(iii), $(RN)A$ is flat.
To see what happens when we extend scalars to $RM,$
note that since $N$ is assumed right as well as left
division-closed, the left-right dual of~(i) says that
$RM$ is flat as a right $\!RN\!$-module.
Hence when we left-tensor the
inclusion of $\!RN\!$-modules $(RN)A\hookrightarrow RN$ with $RM,$
we get an embedding $RM\otimes_{RN} (RN)A\hookrightarrow RM.$
This says that $RM\otimes_{RN} (RN)A$ is naturally
isomorphic to $(RM)A,$ so assuming the latter projective as a left
$\!RM\!$-module, the hypotheses of J{\o}ndrup's Lemma are
satisfied, and we get the desired projectivity of $(RN)A.$

(Warren Dicks has pointed out to me that one can get~(iv) directly
from~(i) and~(ii), using the fact that as an $\!RN\!$-module,
$RM$ is not merely flat but is a directed union of free modules.
One deduces that $(RM)A,$ since it is projective, and hence $(RN)A,$
as a direct summand therein, are direct summands in such
directed unions; hence the latter, being a finitely
generated $\!RN\!$-module, must be a direct summand
in some free submodule in the limit construction, hence projective.
Note that this argument still needs
the right division-closed condition to show that
$(RN)A$ is the image of $(RM)A$ under the projection $RM\to RN.)$
\end{proof}

We can now partially generalize of the proof of
Theorem~\ref{T.n-gen_sgp}, to get the following result, which will
also be used in the next section.

\begin{theorem}\label{T.n-hered}
Let $M$ be a monoid and $n$ a positive integer such that
$DM$ is an $\!n\!$-fir; or if $n>2,$ more generally,
such that $DM$ is a $\!2\!$-fir which is left $\!n\!$-hereditary.
Then for every left and right division-closed submonoid $N\subseteq M,$
the ring $DN$ is a $\!1\!$-fir and is left $\!n\!$-hereditary.
\end{theorem}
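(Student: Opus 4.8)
The plan is to deduce this from Lemma~\ref{L.DN<DM}\,(iv), of which it is essentially a repackaging; the work lies in checking that that lemma's hypotheses are available and in disposing of the boundary case $n=1$. First I would observe that $DN$ is a $\!1\!$-fir: in every case covered by the hypotheses $DM$ has no zero divisors (being at least a $\!1\!$-fir), hence neither does its subring $DN$, and a ring without zero divisors is a $\!1\!$-fir. This already gives the conclusion when $n=1$, since a $\!1\!$-fir is automatically left $\!1\!$-hereditary (each nonzero principal left ideal $Ra$ is free on $a$ by cancellativity).

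So assume $n\geq 2$. Then $DM$ is, in each of the stated cases, at least a $\!2\!$-fir, so by Menal's theorem (recalled just before~\eqref{d.cancel}) $M$ satisfies~\eqref{d.cancel} and~\eqref{d.cap}; and $DM$ is left $\!n\!$-hereditary (this is hypothesised when $n>2$, and when $n=2$ it holds because every $\!2\!$-generator left ideal of a $\!2\!$-fir is free, hence projective). Now let $A\subseteq DN$ be a set of at most $n$ elements, which we may enlarge to an $\!n\!$-element set by appending zeros. Since $DM$ is left $\!n\!$-hereditary, the left ideal $(DM)A$ is projective as a left $\!DM\!$-module, so, $N$ being both left and right division-closed, Lemma~\ref{L.DN<DM}\,(iv) yields that $(DN)A$ is projective as a left $\!DN\!$-module. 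As every $\!n\!$-generated left ideal of $DN$ arises this way, $DN$ is left $\!n\!$-hereditary, finishing the proof.

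I do not expect a genuine obstacle: the substantive content has already been isolated in Lemma~\ref{L.DN<DM}, so the main point to get right is that invoking part~(iv) of that lemma requires condition~\eqref{d.cap} on $M$, which is exactly why, for $n\geq 2$, the hypothesis must be ``$\!2\!$-fir (plus left $\!n\!$-hereditary)'' rather than merely ``$\!1\!$-fir plus left $\!n\!$-hereditary''---and hence why the case $n=1$ has to be treated by the separate elementary remark above.
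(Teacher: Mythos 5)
Your argument is correct and is essentially the paper's own proof: the $n=1$ case via $DN$ being a subring of the integral domain $DM$, and the $n\geq 2$ case via~\eqref{d.cancel},~\eqref{d.cap} and Lemma~\ref{L.DN<DM}\,(iv). The only addition is your explicit remark that an $\!n\!$-fir is left $\!n\!$-hereditary (free implies projective), which the paper leaves implicit.
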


\begin{proof}
Whatever the value of $n,$ our hypotheses insure that $DM$ is a
$\!1\!$-fir, i.e., an integral domain; hence so is any subring thereof.
In particular, since a $\!1\!$-fir is $\!1\!$-hereditary,
this gives the required conclusions in the case $n=1.$

If $n\geq 2,$ then since $DM$ is a $\!2\!$-fir,
$M$ satisfies~\eqref{d.cancel} and~\eqref{d.cap}.
Hence we can apply Lemma~\ref{L.DN<DM}(iv) to $\!n\!$-generator
left ideals $(DN)A\subseteq DN,$ and conclude that they are
projective, making $DN$ $\!n\!$-hereditary, as claimed.
\end{proof}

We would, of course, like to know more.

\begin{question}\label{Q.DG}
If $M$ is a monoid such that $DM$ is an $\!n\!$-fir,
must $DN$ be an $\!n\!$-fir for every
right and left division-closed submonoid $N$ of $M$?
Must this at least be true if $N$ is a subgroup of
the group of invertible elements of $M$?
If it is precisely the group of such invertible elements?
\end{question}

(The referee has provided an answer to the second sentence of
the above question, which will be noted in the published version.)

Another question suggested by the methods we have used is the following.

\begin{question}\label{Q.flat}
What can one say about a monoid $M$ such that the augmentation
ideal of $DM,$ i.e., the ideal
$I$ generated by all elements $a-1$ $(a\in M),$
is flat as a left module, under various hypothesis on $M$?
\end{question}

Warren Dicks (personal communication) notes that results on this
question would be of interest even for $M$ a $\!2\!$-generator group.

One situation where we can get such results is if $M$ is a finitely
generated monoid such that $DM$ is embeddable in a division ring $D'.$
For in this situation, $D'\otimes_{DM} I$ is clearly free as
a $\!D'\!$-vector-space, hence by J{\o}ndrup's Lemma, if $I$
is flat, it is projective as a left $\!DM\!$-module.
In particular, if $M$ is a {\em group} admitting a two-sided
invariant ordering, it is known that $DM$ is embeddable
in a division ring, and $M$ has no elements of finite
order; so if such an $M$ is finitely generated
and its augmentation ideal is flat, we can again apply
the Swan-Stallings Theorem and conclude that $M$ is a free group.

(It is an open question whether for every group $M$ admitting a
{\em one-sided}
invariant ordering, $DM$ is embeddable in a division ring.
A positive answer to that question would clearly allow us to strengthen
the above observation.)

Returning to the properties of a general left division-closed submonoid
$N$ of a monoid $M$ satisfying~\eqref{d.cancel} and~\eqref{d.cap},
note that in contrast to the case of right cosets of a subgroup,
the left $\!N\!$-equivalences classes in $M$
are not in general mutually isomorphic as $\!N\!$-sets.
(For example, if $G$ is the free group on $x$ and $y,$ and $M$ the
submonoid generated by $x$ and all elements $x^{-n}y$ $(n\geq 0),$
let $N=\lang x\rang\md,$ and compare the $\!N\!$-equivalence
classes of $1$ and~$y.)$

The next result shows a bit of structure that one can extract from
this non-isomorphism.
(We will not use it below.)

\begin{lemma}\label{L.non-emb}
Let $M$ be a monoid satisfying~\eqref{d.cancel} and~\eqref{d.cap},
and $N$ a left division-closed submonoid of $M.$
Then

\textup{(i)} The set of $a\in M$ such that the left
$\!N\!$-equivalence class of $a$
is not embeddable as an $\!N\!$-set in $N$ forms a right ideal of $M.$

\textup{(ii)}
If $N$ has the stronger property that $ab\in N\implies a,\,b\in N,$
then the complementary set, of those $a\in M$ such that the left
$\!N\!$-equivalence class of $a$
{\em is} embeddable as an $\!N\!$-set in $N,$ forms a submonoid of~$M.$
\end{lemma}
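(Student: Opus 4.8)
The plan is to track how the left $N$-equivalence class $E_a$ of an element $a\in M$ behaves under right multiplication, exploiting that (by Lemma~\ref{L.N-equiv}) $E_a$ is the union of all principal right cosets $Nz$ containing $a$, each of which is, by cancellativity, a copy of the regular $N$-set $N$.

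For part~(i): write $I$ for the set in question. The first step is to observe that for any $m\in M$, right multiplication $v\mapsto vm$ sends $E_a$ into $E_{am}$ --- indeed if $v,a\in Nz$ then $vm,am\in Nzm$ --- and that this map is $N$-equivariant and, by right cancellation in~\eqref{d.cancel}, injective, hence an embedding of $N$-sets $E_a\hookrightarrow E_{am}$. The second step is then immediate: if $E_{am}$ embedded as an $N$-set in $N$, then composing with the above embedding would embed $E_a$ in $N$; so $a\in I$ forces $am\in I$, and $I$ is closed under right multiplication by elements of $M$, i.e.\ is a right ideal. This part uses nothing beyond Lemma~\ref{L.N-equiv}.

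For part~(ii): write $J$ for the complementary set. Since $E_1=N$ by Lemma~\ref{L.N-equiv}(iv), we have $1\in J$, so it remains to show $J$ is closed under multiplication. Given $a,b\in J$, I would split into two cases. If $a\in N$, then $ab=a\cdot b\in Nb$, so $ab$ and $b$ are left $N$-equivalent and $E_{ab}=E_b$, which embeds in $N$ because $b\in J$; hence $ab\in J$. If $a\notin N$, the plan is to show $E_{ab}=E_a\,b$, from which the map $v\mapsto vb$ of part~(i) becomes a bijective --- hence, being $N$-equivariant, isomorphic --- $N$-set map $E_a\to E_{ab}$, so that $E_{ab}\cong E_a$ embeds in $N$ and $ab\in J$. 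The inclusion $E_a\,b\subseteq E_{ab}$ is exactly the well-definedness used in part~(i). For the reverse inclusion, given $w\in E_{ab}$ pick $z$ with $w,ab\in Nz$ and write $ab=nz$ with $n\in N$; then $ab\in Mz\cap Mb$, so by the left-right dual of~\eqref{d.cap} (which holds in the presence of~\eqref{d.cancel}) either $Mb\subseteq Mz$ or $Mz\subseteq Mb$. In the first case $b=mz$ for some $m\in M$, so cancelling $z$ from $ab=a(mz)=nz$ gives $am=n\in N$, and the stronger hypothesis $xy\in N\implies x\in N$ would force $a\in N$, contrary to assumption. So $Mz\subseteq Mb$, say $z=m'b$; cancelling $b$ from $ab=n(m'b)$ gives $a=nm'$, whence $a$ and $m'$ both lie in $Nm'$, so $m'\in E_a$ and therefore $w\in Nz=Nm'b\subseteq E_a\,b$.

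The genuinely load-bearing step --- and the one to get right --- is the case $a\notin N$ in part~(ii): it is here that the plain left-division-closed condition is not enough, and one must invoke the stronger hypothesis $xy\in N\implies x,y\in N$ to kill the possibility $b\in Mz$; without it $E_{ab}$ can be strictly larger than $E_a\,b$ and the conclusion breaks down. Everything else is a routine combination of Lemma~\ref{L.N-equiv}, cancellativity, and the left-right symmetry of~\eqref{d.cap}.
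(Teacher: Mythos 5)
Your proof is correct. Part~(i) is essentially the paper's own argument: right multiplication by $m$ gives an injective, $N$-equivariant map of the class of $a$ into the class of $am$, and non-embeddability in $N$ passes backwards along such embeddings. For part~(ii) you arrive at the same place by a somewhat different route. The paper first records (leaving the verification to the reader) that the $N$-set structure of a left $N$-equivalence class is determined by the set of elements of $N$ left dividing a representative, and then compares these divisor sets for $x$ and $xy$: either they coincide, so the classes are isomorphic, or $xy$ acquires a new left divisor $a\in N,$ and~\eqref{d.cap} together with the strong hypothesis forces $x\in N,$ whence the class of $xy$ is that of $y.$ You instead case on whether $a\in N,$ and when it is not you prove the explicit equality $E_{ab}=E_a\,b$ (in your notation) by applying the left-right dual of~\eqref{d.cap} to the relation $ab=nz$ --- a legitimate move, since the paper has already observed that~\eqref{d.cap} is self-dual in the presence of~\eqref{d.cancel}; the subcase $Mb\subseteq Mz$ is precisely where the hypothesis $xy\in N\implies x\in N$ enters, exactly as in the paper's ``new divisor'' branch. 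The underlying mechanism is thus the same in both proofs, but yours is more self-contained: it replaces the paper's unproved structural observation by a direct bijective $N$-set map $E_a\to E_{ab},$ at the cost of the separate (trivial) case $a\in N.$ No gaps.
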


\begin{proof}
To get (i), note that
the definition of left $\!N\!$-equivalence shows that
if $a$ and $b$ are left $\!N\!$-equivalent, then for any
$c\in M,$ the elements $a\,c$ and $b\,c$ will also be.
Hence the left $\!N\!$-equivalence class of any element $a$ is mapped
by right multiplication by $c$ into the left $\!N\!$-equivalence
class of $a\,c.$
Since $M$ is assumed cancellative, this
map is an embedding; so if the equivalence class of $a$ is
not embeddable in $N,$ neither is that of $a\,c.$

Before proving (ii), let us note that
from~\eqref{d.cancel} and~\eqref{d.cap}, it is not hard to
deduce that the left $\!N\!$-set structure of
the left $\!N\!$-equivalence class of an element of $M$ is determined
by the set of elements of $N$ that left divide that element.

Now assume the hypothesis of (ii),
and let $N'$ be the set of $a$ such that the
left $\!N\!$-equivalence class of $a$ is embeddable in $N.$
Clearly, $1\in N'.$
Suppose $x,\,y\in N'.$
The set of elements of $N$ that left divide $x\,y$
contains those that left divide $x.$
If it contains nothing more, then by the preceding observation,
the equivalence
classes of $x\,y$ and $x$ are isomorphic, showing that $x\,y\in N'.$
If, on the other hand, $x\,y,$ but not $x,$ is
left divisible by some $a\in N,$ let $x\,y=a\,z.$
Then~\eqref{d.cap} and the fact that $x$ is not left divisible by $a$
show that $a=x\,b$ for some $b\in M.$
This relation and our hypothesis on $N$ tell us that $x\in N,$
so $x\,y\in N y,$ so the left $\!N\!$-equivalence class of $x\,y$ is
that of $y,$ which by assumption is
embeddable as a left $\!N\!$-set in~$N.$
\end{proof}

I wonder whether some much weaker hypothesis than
$ab\in N\implies a,\,b\in N$ could be used in~(ii) above.
This is suggested by the fact that
when $N$ is a proper subgroup of the group of invertible
elements of $M,$ the conclusion of~(ii) holds (since then $N'$
is all of $M);$ but that hypothesis does not.

\section{Commuting elements}\label{S.cm}

It is well-known that every subgroup of a free group is free.

Hence if two elements of a free group commute, the subgroup they
generate must be cyclic.
We can say more: the centralizer
of every nonidentity element $a$ of a free group is cyclic.
For the center of that centralizer contains $a,$ and the only
free group with nontrivial center is the free group on one generator.
From these facts, one can deduce similar facts about
commuting elements and centralizers in direct limits
of free groups, i.e., groups $G$ such that $DG$ is a semifir;
and using slightly more elaborate arguments,
in groups $G$ such that $DG$ is a $\!2\!$-fir.

In this section, we shall obtain such results
for a general monoid $M$ such that $DM$ is a $\!2\!$-fir.
The arguments are unexpectedly complicated, but many
of the intermediate results may be of interest.
We will use left division-closed submonoids in place of
subgroups, so we begin with some observations on left
division closures of commutative submonoids.

\begin{lemma}\label{L.commut}
Let $M$ be a monoid satisfying~\eqref{d.cancel} and~\eqref{d.cap},
and having no nonidentity elements of finite order.
Suppose $a,\,b\in M$ commute, and are
not both $1,$ and let $N$ be the least left division-closed
submonoid of $M$ containing $a$ and $b.$
Then

\textup{(i)} $N=\{c\in M\mid a^i\,b^j\,c\,=\,a^{i'}\,b^{j'}$
for some natural numbers $i,\,j,\,i',\,j'\}.$

\textup{(ii)} $N$ is isomorphic either to the additive monoid $\N$ of
natural numbers, or to the additive group $\Z$ of integers, or to
a submonoid of $\Z\times\Z$
in which the image of $a$ is $(1,0)$ and the image of $b$ is $(0,1).$
\end{lemma}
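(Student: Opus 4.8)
The plan is to pass to the universal group of $M$. Since $M$ satisfies \eqref{d.cancel} and \eqref{d.cap} it satisfies \eqref{d.M<G}, so I may regard $M$ as a submonoid of its universal group, call it $G$. Let $S$ be the submonoid of $M$ generated by $a$ and $b$; as $a$ and $b$ commute, $S=\{a^ib^j:i,j\geq 0\}$. Let $A$ be the subgroup of $G$ generated by $a$ and $b$; being abelian and generated by $S$, it consists exactly of the elements $s^{-1}t$ with $s,t\in S$. The whole argument will hinge on the identity $N=M\cap A$. Granting this, part (i) is immediate, since the condition ``$a^ib^jc=a^{i'}b^{j'}$ for some natural numbers $i,j,i',j'$'' says precisely that $c\in M$ and $c=s^{-1}t\in A$; and part (ii) is reduced to a question about the submonoid $M\cap A$ of the finitely generated abelian group $A$.

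To prove $N=M\cap A$: first, $M\cap A$ is a submonoid of $M$ (the intersection of the submonoid $M$ of $G$ with the subgroup $A$), it contains $a$ and $b$, and it is left division-closed, because if $x,y\in M$ with $x\in M\cap A$ and $xy\in M\cap A$ then $y=x^{-1}(xy)\in A$ while $y\in M$, so $y\in M\cap A$. Hence $N\subseteq M\cap A$. Conversely, any left division-closed submonoid $L$ of $M$ containing $a$ and $b$ contains $S$; and if $c\in M\cap A$, writing $c=s^{-1}t$ with $s,t\in S$ gives $s\in L$ and $sc=t\in L$, whence $c\in L$. So $M\cap A\subseteq L$ for every such $L$; in particular $M\cap A\subseteq N$, giving $N=M\cap A$, and this also confirms that $N$ is indeed the least left division-closed submonoid containing $a$ and $b$, and equals the set displayed in (i).

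For (ii), I would first invoke Lemma~\ref{L.N-equiv}(i): since $N$ is left division-closed in $M$, it again satisfies \eqref{d.cancel} and \eqref{d.cap}. As $N\subseteq A$ is commutative, \eqref{d.cap} for $N$ amounts to $N$ being linearly ordered by divisibility --- for any $x,y\in N$ we have $xy\in xN\cap yN$, so $xN\subseteq yN$ or $xN\supseteq yN$, i.e.\ $y\mid x$ or $x\mid y$. Hence every element of the universal group $A_N$ of $N$, being of the form $xy^{-1}$ with $x,y\in N$, lies in $N\cup N^{-1}$; therefore the canonical homomorphism $A_N\to A$ is injective (its kernel meets $N\cup N^{-1}=A_N$ only in $1$) as well as surjective, so $A_N\cong A$. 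Thus $A$ is a finitely generated abelian group, and it is torsion-free because $A=N\cup N^{-1}$ while $M$, hence $N$, has no nonidentity element of finite order. So $A\cong\Z^r$, and $r\neq 0$ because $a$ and $b$ are not both $1$. If $r=2$, then the two-element generating set $\{a,b\}$ of $\Z^2$ must be a basis, and the isomorphism from $A$ to $\Z\times\Z$ sending $a$ to $(1,0)$ and $b$ to $(0,1)$ identifies $N$ with the submonoid required by (ii). If $r=1$, then $N$ is a submonoid of $A\cong\Z$ with $N\cup(-N)=\Z$, and a short check shows $N\cong\N$ or $N\cong\Z$: if $N$ is contained in a half-line $\Z_{\geq 0}$ or $\Z_{\leq 0}$ then $N\cup(-N)=\Z$ forces $N$ to be all of that half-line, hence isomorphic to $\N$; while if $N$ meets both $\Z_{>0}$ and $\Z_{<0}$, its least positive element $p$ and greatest negative element $q$ satisfy $p+q\in N$ with $p+q$ neither positive (it would beat $p$) nor negative (it would beat $q$), so $q=-p$, making $p$ invertible, and then the only way $N\cup(-N)$ can be all of $\Z$ is $p=1$, so $N=\Z$.

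The substantive step is the reduction $N=M\cap A$: it yields (i) outright and converts (ii) into elementary finitely-generated-abelian-group theory. The one place I expect to need genuine care rather than bookkeeping is the torsion-freeness of $A$, where the inherited condition \eqref{d.cap} on $N$ must be used (in the guise ``linearly ordered by divisibility'') to obtain $A\cong A_N=N\cup N^{-1}$, and then combined with the hypothesis that $M$ has no nonidentity element of finite order. The concluding rank-$1$ classification of $N$ is a small self-contained fact about submonoids of $\Z$.
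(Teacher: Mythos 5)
Your proof is correct, but it takes a genuinely different route from the paper's. The paper argues intrinsically: to each $c\in N$ it attaches the set $S_c\subseteq\Z\times\Z$ of exponent differences $(i'-i,\,j'-j),$ checks that $c\mapsto S_c$ is injective, that the differences form a subgroup $K$ independent of $c,$ and thus embeds $N$ in $(\Z\times\Z)/K;$ it then proves $K$ pure (this is where \eqref{d.cap} on the commutative $N$ and the absence of torsion enter) and splits into cases according to the rank of $K.$ You instead invoke the theorem, quoted in \S\ref{S.conditions} of the paper, that \eqref{d.cancel} and~\eqref{d.cap} imply~\eqref{d.M<G}, embed $M$ in its universal group, and reduce everything to the identity $N=M\cap A$ with $A=\lang a,b\rang\gp;$ part~(i) then falls out immediately, and part~(ii) becomes the classification of the finitely generated torsion-free abelian group $A$ together with the observation $A=N\cup N^{-1}.$ The two arguments are structurally parallel --- your $A$ is the paper's $(\Z\times\Z)/K,$ and torsion-freeness of $A$ is exactly purity of $K$ --- but yours is shorter and more transparent at the cost of resting on the nontrivial embeddability theorem of Doss/Cohn, whereas the paper's version is self-contained and in effect reproves the relevant fragment of that embedding for the particular commutative submonoid at hand. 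All the individual steps of your argument check out: $M\cap A$ is indeed left division-closed and contained in every left division-closed submonoid containing $a$ and $b;$ Lemma~\ref{L.N-equiv}(i) gives~\eqref{d.cap} for $N,$ whence divisibility linearly orders $N$ and $N\cup N^{-1}$ is all of $A;$ a two-element generating set of $\Z^2$ is a basis; and your rank-one analysis correctly yields $\N$ or $\Z.$
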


\begin{proof}
The set given by the right-hand side
of (i) is easily seen to be a submonoid
of $M,$ and to contain $a$ and $b;$
and it is certainly contained in $N,$
since every $c$ as in the description of
that set can be obtained by left-dividing $a^{i'} b^{j'}\in N$
by $a^i b^j\in N.$
Hence to prove~(i), it will suffice to show that
that set is left division-closed.
So suppose $c=d\,e,$ where $c$ and $d$ belong to that set, and $e\in M.$
Writing
\begin{equation}\begin{minipage}[c]{35pc}\notag
\begin{center}
$a^i\,b^j\,c\ =\ a^{i'}\,b^{j'},\qquad a^k\,b^l\,d\ =\ a^{k'}\,b^{l'},$
\end{center}
\end{minipage}\end{equation}
we verify that $e\in N$ by the computation
\begin{equation}\begin{minipage}[c]{35pc}\notag
\begin{center}
$a^{i+k'}\,b^{j+l'} e
\ =\ (a^i b^j)(a^{k'} b^{l'})\,e
\ =\ (a^i b^j)(a^k b^l)\,d\,e
\ =\ (a^i b^j)(a^k b^l)\,c$\\
$=\ (a^k b^l)(a^i b^j)\,c
\ =\ (a^k b^l)(a^{i'} b^{j'})
\ =\ a^{i'+k}\,b^{j'+k}.$
\end{center}
\end{minipage}\end{equation}

To get (ii), let us first note that if an element
$c\in N$ satisfies $a^i b^j c = a^{i'} b^{j'},$ then it also
satisfies $a^{i+m}\,b^{j+n} c = a^{i'+m}\,b^{j'+n}$ for all natural
numbers $m$ and $n.$
This suggests that we associate to each $c\in N$
the set $S_c$ of pairs $(i'\,{-}\,i,j'\,{-}\,j)\in\Z\times\Z$
such that $a^i b^j c = a^{i'} b^{j'}.$
If two such sets are equal, $S_c=S_d,$ it is not hard to see that
we can find $i,$ $j,$ $i',$ $j'$
which simultaneously satisfy $a^i\,b^j\,c = a^{i'}\,b^{j'}$ and
$a^i\,b^j\,d = a^{i'}\,b^{j'};$ so by~\eqref{d.cancel}, $c=d.$
Hence the map $c\mapsto S_c$ is one-to-one.
It is also not hard to verify that for each $c\in N,$ the set
of differences between members of $S_c$
will form a subgroup of $\Z\times\Z,$ and, using~\eqref{d.cancel},
that this subgroup will be the same for all $c\in N.$
Calling this common subgroup $K,$ we find that we have
an embedding $h:N\to(\Z\times\Z)/K.$

I claim that $K$ must be a pure subgroup of $\Z\times\Z,$
i.e., that for any $x\in\Z\times\Z$ and any positive
integer $n,$ if $n\,x\in K$ then $x\in K.$
For given such $x$ and $n,$
we can take elements $c=a^i\,b^j,$ $d=a^{i'}\,b^{j'}$
such that $(i\,{-}\,i',j\,{-}\,j')=x.$
Now the existence of an embedding in $(\Z\times\Z)/K$ shows
that $N$ is commutative, so any two elements have a common
right multiple; so since, by Lemma~\ref{L.N-equiv}\,(i),
$N$ satisfies~\eqref{d.cap}, there exists
$e\in N$ such that either $c\,e=d$ or $d\,e=c.$
Thus, $\pm x\in S_e,$ so the
image of $e$ under the embedding of $N$ in $(\Z\times\Z)/K$
will have exponent $n.$
Since by
hypothesis $M$ has no nonidentity elements of finite order, $e=1,$
so $x\in K,$ showing that $K\subseteq\Z\times\Z$ is indeed pure.

We now consider the rank of $K\subseteq\Z\times\Z$ as an abelian group.

If $K$ has rank zero, i.e., if $K=\{0\},$ then we have an embedding
of $N$ in $\Z\times\Z,$ which by construction
takes $a$ and $b$ to $(1,0)$ and $(0,1)$ respectively,
giving one of the alternative conclusions of~(ii).

If $K$ had rank~$2,$ then
being pure, it would be all of $\Z\times\Z,$
so $N$ would be trivial, contradicting our assumption that
$a$ and $b$ are not both $1.$

This leaves the case where $K$ has rank~$1.$
Since it is pure, we have $(\Z\times\Z)/K\cong \Z,$
so $h:N\to(\Z\times\Z)/K$ yields an embedding $f:N\to\Z.$

If $f(N)\subseteq\Z$ contains both positive and
negative elements, it is clearly a subgroup, hence cyclic,
another of our alternative conclusions.
In the contrary case, we may assume without loss of
generality that it consists of nonnegative integers.
Now condition~\eqref{d.cap}, applied to $f(N)\cong N,$ says that
if $f(N)$ contains both $m$ and $n,$ then their difference, in one
order or the other, also lies in $f(N).$
This allows us to apply the Euclidean algorithm to $f(N),$
and conclude that it is a cyclic monoid, the remaining alternative.
\end{proof}

In the situation
where $DM$ is a $\!2\!$-fir, we can exclude one of the above cases.

\begin{corollary}\label{C.commut}
If $M$ is a monoid such that $DM$ is a $\!2\!$-fir, then
every pair of commuting elements of $M$ lies either in a
cyclic submonoid of $M,$ or in a cyclic subgroup of
the group of invertible elements of $M.$
\end{corollary}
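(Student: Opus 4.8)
The plan is to build on Lemma~\ref{L.commut}. Since $DM$ is a $2$-fir it is a $1$-fir, so $M$ is cancellative~\eqref{d.cancel} and has no nonidentity element of finite order (an element $a$ with $a^k=1$, $k>1$, would give $(a-1)(1+a+\dots+a^{k-1})=0$ in $DM$ with both factors nonzero), and by Menal's theorem $M$ also satisfies~\eqref{d.cap}. Hence, if $a,b\in M$ commute and are not both $1$, Lemma~\ref{L.commut} applies: the least left division-closed submonoid $N$ containing them is isomorphic either to the additive monoid $\N$, to $\Z$, or to a submonoid $S\subseteq\Z\times\Z$ with $a\mapsto(1,0)$ and $b\mapsto(0,1)$. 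In the first case $N$ is a cyclic submonoid of $M$ containing $a,b$; in the second, every element of $N\cong\Z$ is invertible in $N$, hence in $M$, so $N$ is a cyclic subgroup of the invertibles containing $a,b$. Everything therefore reduces to ruling out the third case.

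So assume $N\cong S\subseteq\Z\times\Z$ as above. First I would upgrade $N$ to a two-sided division-closed submonoid, in order to invoke Theorem~\ref{T.n-hered}. Since $DM$ is a $2$-fir, $M$ embeds in its universal group $G$ by~\eqref{d.M<G}; by the description of $N$ in Lemma~\ref{L.commut}(i), in $G$ every element of $N$ is a product $a^{p}b^{q}$ with $p,q\in\Z$ (the images of $a$ and $b$ commute in $G$). Hence if $c=de$ with $c,e\in N$, then $d=ce^{-1}$ is again such a product, say $d=a^{m}b^{n}$, and choosing $r,s\in\N$ with $r+m\ge 0$, $s+n\ge 0$ gives $a^{r}b^{s}d=a^{r+m}b^{s+n}$, an equality in $G$ hence in $M$, so $d\in N$. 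Thus $N$ is also right division-closed, and Theorem~\ref{T.n-hered} (with $n=2$) shows that $DN$ is a $1$-fir and is left $2$-hereditary. Note also, as in Lemma~\ref{L.N-equiv}(i), that $N$ inherits~\eqref{d.a=cad>} and~\eqref{d.ab=ca>'} from $M$ (using that an element of $N$ is invertible in $N$ iff it is invertible in $M$); applied to $N\cong S$, \eqref{d.a=cad>} forces $S$ to be conical unless $N$ is a group, so in the non-group subcase the images $x,y\in DN$ of $a,b$ are non-units.

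The contradiction then comes by showing that $DN\cong DS$, with $\N\times\N\subseteq S$, cannot be left $2$-hereditary. If $N$ is a group it is necessarily $\cong\Z\times\Z$, and then $N$ is a $2$-generated abelian subgroup of the group of invertible elements of $M$, hence free by Theorem~\ref{T.n-gen_sgp}(ii) --- which a rank-$2$ free abelian group is not; this is exactly the observation that $D[x,x^{-1},y,y^{-1}]$ is not a $2$-fir referred to in Example~\ref{E.group}. If $N$ is not a group, one works with the two-sided ideal $(x,y)DN$ (or with a maximal ideal above it): $x,y$ are central and the monomials $x^{i}y^{j}$ $(i,j\ge 0)$ are $D$-independent, so $D[x,y]\subseteq DN$, and one shows from the monomial description of $S$ that this $2$-generated left ideal cannot be projective --- e.g.\ when $S=\N\times\N$ a $\gcd$ computation in $D[x,y]$ gives $(x,y)^{-1}=DN$ inside the fraction ring, so $(x,y)DN$ is not invertible, hence not projective. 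This contradicts the left $2$-hereditariness of $DN$, completing the proof.

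The main obstacle I anticipate is this last step. The reduction is clean, but verifying \emph{uniformly} --- for every submonoid $S\subseteq\Z\times\Z$ consistent with the conditions~\eqref{d.cap},~\eqref{d.a=cad>},~\eqref{d.ab=ca>'} that $N$ must inherit (which include, besides $\N\times\N$, positive cones of irrational-slope orderings of $\Z\times\Z$, for which $DS$ is an infinite-dimensional, non-Noetherian domain) --- that $DS$ genuinely fails to be left $2$-hereditary, i.e.\ exhibiting the right non-projective $2$-generated left ideal and controlling the relevant residue structure, is where the real work lies.
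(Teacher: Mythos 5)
Your reduction follows the paper's route: invoke Lemma~\ref{L.commut}, observe that the first two cases give exactly the desired conclusion, upgrade $N$ to a two-sided division-closed submonoid (your argument via the universal group is fine; the paper gets it more directly from the fact that any two elements of the commutative monoid $N$ are comparable under division, by~\eqref{d.cap}), and transfer $2$-hereditariness from $DM$ to $DN.$ The gap is exactly where you place it, and it is genuine: the ideal $(a,b)DN$ is the wrong ideal. For the cone-type submonoids $S$ you yourself list (e.g.\ $S=\{(r,s):r+\tau s\geq 0\},$ which does satisfy~\eqref{d.cap} since it is totally ordered by division), one of $a,$ $b$ left divides the other in $S,$ so $(a,b)DN$ is \emph{principal}, hence free, hence projective --- no contradiction is available from it. So the final step does not merely lack a uniform verification; for some admissible $N$ the proposed witness simply is not a counterexample to $2$-hereditariness.

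The paper's choice of ideal repairs this: take instead the left ideal of $DN$ generated by $a-1$ and $b-1.$ If it were projective, then so would be its image under the central localization inverting $a$ and $b$; and since every element of $N$ is of the form $a^pb^q$ $(p,q\in\Z)$ in the universal group, that localization is $D[a,a^{-1},b,b^{-1}]$ \emph{uniformly in} $N.$ There the ideal $(a-1,\,b-1)$ is the augmentation ideal of the group ring of $\Z\times\Z,$ which is not even flat (the method of \cite[Example~4.19]{TYL}, or the Dicks--Menal argument that projectivity of the augmentation ideal would force $\Z\times\Z$ to be free). This kills the third case of Lemma~\ref{L.commut}(ii) in one stroke, with no case analysis on $S.$ (Your disposal of the subcase where $N$ is a group via Theorem~\ref{T.n-gen_sgp}(ii) is correct and coincides with the paper's alternative remark, but it is subsumed by the localization argument.) To complete your proof, replace $(a,b)$ by $(a-1,b-1)$ and run the localization.
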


\begin{proof}
As noted in \S\ref{S.conditions},
$M$ satisfies~\eqref{d.cancel} and~\eqref{d.cap}.
Also, $M$ can have no elements of finite order, since
these would lead to zero-divisors in $DM,$ so that it would
not even be a $\!1\!$-fir.
Hence by the preceding lemma, given commuting $a,\,b\in M,$
they will both be contained in a left division-closed
submonoid $N$ of one of the forms described there.
As in the proof of that lemma, we see that
given any two elements of $N,$ one will divide the other.
This implies that $N$ is not only left,
but also right division-closed in $M.$
Since $DM$ is a $\!2\!$-fir,
Lemma~\ref{L.DN<DM}(iii) allows us to conclude that
every $\!2\!$-generator left ideal of $DN$ is projective.

I now claim that if $N$ had the last of the three forms
described in Lemma~\ref{L.commut}(ii),
i.e., if it embedded in $\Z\times\Z$ with $a$ and $b$
mapping to linearly independent elements, then
the left ideal of $DN$ generated by $a-1$ and $b-1$ would be
non-projective.
For if it were projective, then passing to the central localization
$D[a,a^{-1},b,b^{-1}],$ the same would be true of the left ideal
of that ring generated by $a-1$ and $b-1;$
but this ideal is not even flat, as shown by
the method of \cite[Example~4.19]{TYL}.
(The argument is stated there for commutative base ring,
but works equally well in the noncommutative case.)
Alternatively, one can regard $D[a,a^{-1},b,b^{-1}]$ as a group
ring of a finitely generated group, and conclude from the proof of
\cite[p.\,288]{WD+PM} that if its augmentation ideal were
projective, that group would be free, which it is not.
So we must be in one of the other
two cases of Lemma~\ref{L.commut}(ii).
\end{proof}

The next result examines what one can say about
families of more than two commuting elements.
Since the argument used is applicable to elements of finite as well
as infinite order, we do not exclude these until the final sentence.

\begin{proposition}\label{P.cmtgA}
Let $M$ be a monoid satisfying~\eqref{d.cancel}, such that every pair
of commuting elements of $M$ lies either in a cyclic subgroup of the
group of invertible elements of $M,$ or in a cyclic submonoid of~$M.$

Then every finite set $A$ of commuting elements
of $M$ likewise lies either in a cyclic subgroup or
a cyclic submonoid.

Hence, {\em every} set $A$ of commuting elements
of $M$ lies in a submonoid $N$ which is a directed
union either of cyclic subgroups, or of cyclic submonoids.
Such an $N$ is
isomorphic either to a subgroup of the additive group $\Q$ of
rational numbers, or to a subgroup of the factor-group $\Q/\Z$
thereof, or to the monoid of nonnegative elements in a subgroup of $\Q.$

In particular, this is true in every monoid $M$
such that $DM$ is a $\!2\!$-fir.
In that case, since elements of finite order do not occur,
every such $A\not\subseteq\{1\}$ is contained in
a directed union either of infinite cyclic groups,
or of infinite cyclic monoids.
\end{proposition}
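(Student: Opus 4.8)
The plan is to prove the three assertions in turn, after one normalization. Since $M$ is cancellative, a cyclic submonoid $\langle g\rangle\md$ is either infinite, hence $\cong\N$, or (from $g^i=g^j$ with $i<j$ one gets $g^{j-i}=1$ by cancellation) a finite cyclic group; and a cyclic subgroup of the group of units of $M$ is $\cong\Z$ or finite cyclic. So the hypothesis says exactly that any two commuting elements of $M$ are simultaneous powers of a single element, the subsemigroup they span being $\cong\N$, $\cong\Z$, or finite cyclic.

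For the first assertion I would induct on $|A|$, the cases $|A|\le 2$ being the hypothesis. Let $A=\{a_1,\dots,a_n\}$ with $n\ge 3$. The inductive hypothesis gives $g\in M$ with $a_i=g^{e_i}$ for $i<n$ (exponents nonnegative, or arbitrary integers when $\langle g\rangle\md$ is a group). If all $e_i=0$ we are done; otherwise let $d$ be the gcd of the nonzero $|e_i|$ and $f=g^d$, so $a_i=f^{e_i'}$ with the $|e_i'|$ of gcd $1$. As $a_n$ commutes with each $a_i$, it commutes with each $f^{|e_i'|}$, hence with $f^m$ for every $m$ in the numerical semigroup on the $|e_i'|$; that semigroup contains all large $m$, so $a_n$ commutes with $f^L$ and $f^{L+1}$ for some $L$, and cancelling $f^L$ in $a_n f^{L+1}=f^{L+1}a_n$ gives $a_n f=f a_n$. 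Now $\{f,a_n\}$ is a commuting pair, so $f=h^s$, $a_n=h^t$ for some $h\in M$ with $\langle h\rangle\md$ of one of the admissible types; then $a_i=h^{s e_i'}$ for $i<n$ as well, so $A\subseteq\langle h\rangle\md$. I expect this inductive step to be the main obstacle: knowing $a_n$ commutes with $a_1,\dots,a_{n-1}$ does not visibly make it commute with a common ``base'' $g$ of those elements, and it is the $\gcd$/numerical-semigroup computation together with cancellation that bridges the gap.

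For an arbitrary commuting set $A$, I would first extract a dichotomy forced by the hypothesis: a non-identity unit cannot commute with a non-unit, for the cyclic submonoid containing such a pair would have to be $\cong\N$, which has no non-identity units. So either every element of $A$ is a unit, or none is a non-identity unit. Put $N_0=\langle A\rangle\md$; by the finite case every finitely generated subgroup of its group completion $\widehat{N_0}$ is cyclic, so $\widehat{N_0}$ is locally cyclic, hence isomorphic to a subgroup of $\Q$ if torsion-free and of $\Q/\Z$ if torsion. In the all-units case the subgroup $N=\langle A\rangle\gp$ of the unit group is the directed union of its finitely generated subgroups $\langle A_0\rangle\gp=\langle A_0\cup A_0^{-1}\rangle\md$, each of which (finite case) lies in a cyclic subgroup and so is itself cyclic; thus $N$ is a directed union of cyclic subgroups, isomorphic to a subgroup of $\Q$ or of $\Q/\Z$, and $A\subseteq N$. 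In the remaining case $N_0$ has no non-identity units, whence $\widehat{N_0}$ is torsion-free of rank $\le 1$ (two elements with $\Q$-independent images would span a submonoid whose group completion has rank $2$, contradicting the hypothesis) and so embeds in $\Q$, with $N_0\subseteq\Q_{\ge 0}$ after possibly negating the embedding (a submonoid of $\Q$ with no non-identity units lies in $\Q_{\ge 0}$ or $\Q_{\le 0}$, by clearing denominators). One then checks that, via this order-preserving embedding, each element of $\widehat{N_0}\cap\Q_{\ge 0}$ is represented by a well-defined element of $M$ (the unique $z$ with $x=yz$, where the element is $\phi(x)-\phi(y)$ with $x,y\in N_0$), yielding a monoid isomorphism of $\widehat{N_0}\cap\Q_{\ge 0}$ onto a submonoid $N\subseteq M$; this $N$ contains $A$, is isomorphic to the monoid of nonnegative elements of the subgroup $\widehat{N_0}$ of $\Q$, and is the directed union of the images of the nonnegative parts of the cyclic subgroups exhausting $\widehat{N_0}$, hence a directed union of cyclic submonoids of $M$.

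Finally, if $DM$ is a $\!2\!$-fir it is a $\!1\!$-fir, so $M$ is cancellative and has no elements of finite order, and Corollary~\ref{C.commut} supplies precisely the hypothesis of the proposition; the absence of torsion then rules out the $\Q/\Z$ alternative and the finite cyclic constituents, so for $A\not\subseteq\{1\}$ the monoid $N$ is a directed union of infinite cyclic groups (all-units case) or of infinite cyclic monoids (otherwise). The one thing to track carefully throughout is the monoid-versus-group bookkeeping, negative exponents occurring only when the relevant cyclic object is genuinely a group.
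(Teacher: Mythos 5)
Your proof is correct and follows essentially the same route as the paper's: the heart of the inductive step --- normalizing by the gcd of the exponents so that the numerical semigroup they generate contains two consecutive integers $L$ and $L+1,$ and then cancelling $f^L$ to conclude that $a_n$ commutes with the common base $f$ --- is exactly the paper's argument, as is the dichotomy between all-invertible and no-nonidentity-invertible classes and the final appeal to Corollary~\ref{C.commut} and torsion-freeness. The only differences are organizational: you run a single gcd-based induction covering both the group and monoid cases (the paper treats the invertible case separately and more simply, since there the new element automatically commutes with a generator expressed as a word in the old ones), and you spell out in more detail the classification of the resulting directed unions as subgroups of $\Q,$ of $\Q/\Z,$ or nonnegative parts thereof, which the paper leaves as ``easily shown.''
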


\begin{proof}
Let $A$ be a set of commuting elements of $M.$

We first note that if some $a\neq 1$ in $A$ is invertible in $M,$
then every $b\in A$ is invertible.
This is immediate if $b$ lies in a cyclic subgroup
with $a,$ or if $b=1.$
If $b\neq 1$ and $b$ is instead assumed to
lie in a cyclic submonoid containing the invertible element $a,$
then some positive power of $b$ must equal a
power of $a,$ hence must be invertible, so $b$ is invertible.

So either all members of $A$ or no nonidentity
members of $A$ are invertible.
Let us start with the first case.
The hypothesis on pairs of commuting elements of $M$
shows that the subgroup of $M$ generated by any two
commuting invertible elements is cyclic.
Given finitely many commuting invertible elements $a_1,\dots,a_n$
with $n\geq 2,$ assume inductively that the
subgroup generated by $a_1,\dots,a_{n-1}$ is cyclic,
say with generator $a.$
Then $a_n$ will commute with
$a,$ hence will generate with it a cyclic subgroup, which
will be the subgroup generated by $a_1,\dots,a_n,$ as desired.
For possibly infinite $A,$ the above result
shows that each finite subset of $A$ generates a cyclic subgroup, so
the subgroup generated by $A$ is the directed union of those groups.
Such a directed union is easily shown to be isomorphic either to
a subgroup of $\Q$ or to a subgroup of $\Q/\Z.$

In the noninvertible case, we again start with a
finite $A=\{a_1,\dots,a_n\},$ whose members we may assume
without loss of generality are all $\neq 1,$ and we again assume
inductively that $a_1,\dots,a_{n-1}$ are powers
(natural-number powers in this case) of a common element $c.$
Replacing $c$ by a power of itself if necessary, we can
assume that the exponents to which $c$ is raised
to get $a_1,\dots,a_{n-1}$ have no common divisor.
It is easily deduced that the monoid generated by $a_1,\dots,a_{n-1}$
contains both $c^r$ and $c^{r+1}$ for some $r\geq 0.$
Since $a_n$ commutes with each of $a_1,\dots,a_{n-1},$
it commutes with $c^r$ and with $c^{r+1};$ so
$a_n\,c^{r+1}=c^{r+1}\,a_n=c\,a_n\,c^r.$
Cancelling $c^r,$ on the right, we see that $a_n$
commutes with $c,$ hence lies in a cyclic submonoid containing $c,$
and this will thus contain $a_1,\dots,a_n,$ as required.

We see in fact that the above construction yields the
least left division-closed submonoid of $M$ containing $A,$
which is thus also the least cyclic submonoid of $M$ containing $A.$

Looking again at possibly infinite $A,$ we see that $A$ will
be contained in the directed union of the least cyclic submonoids
containing its finite subsets.
Since the elements of $A$ are not invertible, and $M$ has
cancellation, those cyclic submonoids are infinite; and
it is not hard to show that
a directed union of infinite cyclic monoids has the form claimed.

Corollary~\ref{C.commut} shows that the above results apply
to monoids $M$ such that $DM$ is a $\!2\!$-fir.
As we have noted, even for $DM$ a $\!1\!$-fir, $M$ cannot
have elements of finite order, giving the final assertion.
\end{proof}

We now wish to prove a corresponding result for
the {\em centralizer} of any nonidentity element $c\in M.$
From Corollary~\ref{C.commut}, we see that every nonidentity element
of the centralizer of $c$ has a positive or negative power which is a
positive power of $c;$ hence, any two elements
$a$ and $b$ of that centralizer have nonzero powers which are equal.
For elements of a general group or monoid, this does not
guarantee that $a$ and $b$ commute.
For instance, in the group
\begin{equation}\begin{minipage}[c]{35pc}\label{d.x^2=y^2}
$\lang x,\,y\mid x^2=y^2\rang\gp,$
\end{minipage}\end{equation}
$x$ and $y$ each lie in the centralizer of $x^2=y^2\neq 1,$
and each generates, with that element, a cyclic subgroup,
but $x$ and $y$ do not commute with each other.
But we shall see that this does not happen
in the monoids we are interested in.
The proofs of this fact will be quite different
depending on whether $a$ and $b$ are invertible.
We start with the noninvertible case.

(Ferran Ced\'{o} informs me that the next two results are related to
the cited results from the unpublished thesis~\cite{AP}.)

\begin{lemma}[cf.~{\cite[Corol$\cdot$lari~3.2.2]{AP}}]\label{L.>cm}
Let $M$ be a monoid satisfying~\eqref{d.cancel}-\eqref{d.a=cad>},
and suppose $a,\,b$ are noninvertible elements of $M$
such that $a^m=b^n$ for some positive integers $m$ and $n.$
Then $a$ and $b$ commute.
\end{lemma}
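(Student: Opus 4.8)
The plan is to induct on $m+n$. The base cases are immediate: if $m=1$ then $a=b^{n}$, so $ab=b^{n+1}=ba$, and symmetrically for $n=1$. For the inductive step, with $m,n\ge2$, I would first settle the case $m=n$ and use it to reduce to $\gcd(m,n)=1$. When $m=n$: by \eqref{d.cap}, and by the symmetry of the statement under $(a,m)\leftrightarrow(b,n)$, we may write $b=ac$; then $a^{m}=(ac)^{m}=a(ca)^{m-1}c$, and cancelling $a$ gives $a^{m-1}=c(ac)^{m-1}$. Reading the right-hand side left to right exhibits the product $a^{m-1}$ in the shape $c_{0}\,a\,c_{1}\,a\cdots a\,c_{m-1}$ with every $c_{i}=c$, so since $a$ is noninvertible, Lemma~\ref{L.a1a2>} forces $c=1$, whence $a=b$. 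Applying this to $A=a^{\mu}$, $B=b^{\nu}$ (noninvertible, with $A^{g}=B^{g}$), where $m=g\mu$, $n=g\nu$, $g=\gcd(m,n)$, yields $a^{\mu}=b^{\nu}$; so we may assume $\gcd(m,n)=1$, hence $m\ne n$, and by symmetry $m<n$, so $m\ge2$ and $n\ge3$.

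Since $a^{m}=b^{n}$ is a common right multiple, and dually a common left multiple, of $a$ and $b$, condition~\eqref{d.cap} together with its left--right mirror (valid because $M$ is cancellative) makes $a$ and $b$ comparable under both right- and left-divisibility. If the two comparabilities point the opposite way --- say $b=ac$ while $a=c'b$, or $a=bd$ while $b=d''a$ --- then substituting one relation into the other gives $a=c'ac$, respectively $a=d''ad$, and \eqref{d.a=cad>} forces $a=b$, contradicting $m<n$. So the comparabilities agree, leaving two configurations: (I) $b=ac=c'a$, or (II) $a=bd=e''b$ (with the new factors nontrivial, else $a=b$). In case (I) the same cancellation as above gives $a^{m-1}=c(ac)^{n-1}$, and since $m-1\le n-1$ this displays $a^{m-1}$ as $c_{0}\,a\,c_{1}\,a\cdots a\,c_{m-1}$ with $c_{0}=\dots=c_{m-2}=c$ and $c_{m-1}=c(ac)^{n-m}$; Lemma~\ref{L.a1a2>} again forces $c=1$, contradicting $c\ne1$. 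Hence case (I) cannot occur, and I am reduced to case (II).

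Case (II) is where the work lies. Here $bd=e''b$ is a relation of the type analysed just before Lemma~\ref{L.ab=ca}: by \eqref{d.cap}, either $e''$ left-divides $b$, in which case cancelling reproduces the same relation with a proper left-divisor of $b$ in place of $b$ --- iterating this either terminates, making $b$ (hence $a=e''b$) a power of $e''$ so that $ab=ba$, or drives us into the other alternative --- or $b$ left-divides $e''$, say $e''=be'''$, so that $d=e'''b$ and $a=be'''b$. In the latter case, cancelling one $b$ from each end of $(be'''b)^{m}=b^{n}$ gives $b^{n-2}=e'''(b^{2}e''')^{m-1}$. If $n\le2m$ (and $n=2m$ is excluded, since with $\gcd(m,n)=1$ it forces the base case $m=1$) the right-hand side displays $b^{n-2}$ in the shape to which Lemma~\ref{L.a1a2>} applies, with leading coefficient $c_{0}=e'''\ne1$, a contradiction; if $n>2m$, I would peel a copy of $b$ off $e'''$ --- applying \eqref{d.cap} to $b$ and $e'''$: either $e'''$ left-divides $b$, leading by a further instance of the same analysis (and \eqref{d.a=cad>}) to $a$ being a power of $b$, or $b$ left-divides $e'''$, and the process repeats with $a=b^{2}e_{4}b$, $b^{n-3}=e_{4}(b^{3}e_{4})^{m-1}$, and so on. After each peel the exponent on the left drops by one while the $b$-count on the right grows, so after at most about $n/m$ steps Lemma~\ref{L.a1a2>} (or a base-case degeneration) applies. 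A side-case in which one of the peeled-off factors ($c$, $e''$, $e'''$, \dots) happens to be invertible rather than noninvertible is handled by the same arguments, using that right-multiplying by an invertible element leaves divisibility unchanged.

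The hard part will be exactly case (II) with $m<n$: the delicate issue is to verify that this ``peeling'' recursion genuinely terminates --- equivalently, that the residual factors cannot be infinitely left-divisible by $b$ --- and it is here that the finiteness of $n/m$, repeatedly fed back through Lemma~\ref{L.a1a2>} and \eqref{d.a=cad>}, has to do the real work; everything else amounts to bookkeeping with \eqref{d.cap} and \eqref{d.cancel}.
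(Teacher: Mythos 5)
Your proof is incomplete, and the gap is in what you yourself identify as the main case. The preliminary reductions (the base cases, the $m=n$ case via Lemma~\ref{L.a1a2>}, the reduction to $\gcd(m,n)=1,$ and the elimination of case (I) and of the ``mixed'' configurations via~\eqref{d.a=cad>}) are all fine. But case (II), where $a=bd=e''b$ with $m<n,$ is exactly the case that actually occurs when $a$ is ``longer'' than $b,$ and there your argument stops at a sketch: the relation $b\,d=e''\,b$ is a relation of the shape $\alpha\beta=\gamma\alpha,$ and the hypotheses of the lemma are only~\eqref{d.cancel}--\eqref{d.a=cad>} --- condition~\eqref{d.ab=ca>} is \emph{not} available. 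As the discussion preceding Lemma~\ref{L.ab=ca} makes explicit, \eqref{d.cancel} and~\eqref{d.cap} alone let you run the peeling step forever; the assertion that it terminates is precisely the extra content of~\eqref{d.ab=ca>'}. So your plan amounts to proving a special case of~\eqref{d.ab=ca>} from weaker hypotheses plus the equation $a^m=b^n.$ That special case is true, but the termination argument (``after at most about $n/m$ steps\dots'') is exactly the point that needs a proof, and none is given; the bookkeeping of the intermediate identities $b^{n-k}=e_{k}(b^{k}e_{k})^{m-1}$ is also not verified. A proof that ends by saying the hard part ``has to do the real work'' is not a proof of the hard part.

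The paper avoids all of this with one change of viewpoint: instead of comparing $a$ and $b$ under divisibility, compare $a\,b$ and $b\,a.$ These have the common right multiple $a\,b^{2n}=a^{2m+1}=b\,a^m\,b^{n-1}a,$ so~\eqref{d.cap} gives (without loss of generality) $a\,b=b\,a\,c.$ Iterating yields $a^m b=b\,(ac)^m,$ and since $a^m=b^n$ commutes with $b,$ cancelling $b$ gives $a^m=(ac)^m;$ now Lemma~\ref{L.a1a2>}, applied with all $a_i=a,$ forces $c=1$ at once. No induction on $m+n,$ no case split on which of $a,$ $b$ divides the other, and no termination argument is needed. I would suggest either adopting that route, or, if you want to salvage your own, isolating and honestly proving the termination claim in case (II) --- for instance by finding a quantity that strictly decreases at each peel and showing each peel lands back in a configuration where Lemma~\ref{L.a1a2>} or~\eqref{d.a=cad>} applies.
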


\begin{proof}
The products $a b$ and $b a$ have a common
right multiple: $a\,b^{2n} = a^{2m+1} = b^{2n} a = b\,b^n\,b^{n-1}\,a
= b\,a^m\,b^{n-1}\,a.$
Hence by~\eqref{d.cap}, one of them is a right multiple of the other.
Without loss of generality say
\begin{equation}\begin{minipage}[c]{35pc}\label{d.ab=bac}
$a\,b\ =\ b\,a\,c.$
\end{minipage}\end{equation}
It follows that $a^m b = b (ac)^m.$
Since $a^m=b^n$ commutes with $b,$ we can cancel the $b$ in that
equation on the left, and get $a^m=(ac)^m.$
Since $a$ is noninvertible, Lemma~\ref{L.a1a2>} tells us that $c=1,$
so~\eqref{d.ab=bac} says that $a$ and $b$ commute.
\end{proof}

The analog of the above lemma fails if $a$ and $b$ are invertible, for
every group satisfies~\eqref{d.cancel}-\eqref{d.ab=ca>},
but groups such as~\eqref{d.x^2=y^2} have
noncommuting elements with commuting powers.
But if $DM$ is a $\!2\!$-fir, that behavior can
be excluded using Theorem~\ref{T.n-gen_sgp}(ii).
Let us combine that argument with the preceding results of this section,
and prove

\begin{theorem}[cf.~{\cite[Teoremas~3.2.2,~3.2.9]{AP}}]\label{T.cm}
If $M$ is a monoid such that $DM$ is a $\!2\!$-fir, then the
binary relation of mutual commutativity is an equivalence relation
on the nonidentity elements of $M,$
and each equivalence class either \textup{(i)}~consists
entirely of invertible elements, or \textup{(ii)}~consists
entirely of noninvertible elements.

Every equivalence class falling under~\textup{(i)} forms,
together with the identity element, a locally infinite-cyclic
subgroup of the invertible elements of $M;$ equivalently,
a group embeddable in the additive group of $\Q.$
Every equivalence class falling under~\textup{(ii)} forms,
together with the identity element, a locally infinite-cyclic
submonoid of $M;$ equivalently, a submonoid isomorphic
to the monoid of nonnegative elements in a subgroup of $\Q.$
\end{theorem}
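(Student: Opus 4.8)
The plan is to reduce the whole statement to the transitivity of mutual commutativity, and to attack that by turning a commuting pair into a pair having a common nonzero power. First I would record the standing facts, all from~\S\ref{S.conditions}: since $DM$ is a $\!2\!$-fir, $M$ is cancellative and satisfies~\eqref{d.cap} and~\eqref{d.a=cad>} (Menal), and $M$ has no nonidentity element of finite order (otherwise $DM$ would fail to be even a $\!1\!$-fir). Next I would establish the invertibility dichotomy. If $a$ and $b$ are commuting nonidentity elements, Corollary~\ref{C.commut} places them in a cyclic subgroup of the unit group of $M$ --- in which case both are invertible --- or in a cyclic submonoid $\langle d\rangle\md$; in the latter case $a=d^i$ and $b=d^j$ with $i,j\geq 1$ (neither being $1$), and if $a$ is invertible then $d$ has both a left and a right inverse, hence is invertible, hence so is $b$. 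Thus two mutually commuting nonidentity elements are either both invertible or both noninvertible; granting for the moment that we have an equivalence relation, this already gives the dichotomy~(i)/(ii) between classes.

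For transitivity, Corollary~\ref{C.commut} also shows that any commuting nonidentity $a,b$ satisfy $a^{p}=b^{q}$ for some nonzero integers $p,q$ --- with $p,q$ \emph{positive} when $a$ is noninvertible, since then the relevant cyclic object must be the monoid $\langle d\rangle\md$. So if $a$ commutes with $b$ and $b$ commutes with $c$ (all nonidentity), combining $a^{p_1}=b^{q_1}$ and $b^{p_2}=c^{q_2}$ gives $a^{p_1p_2}=b^{q_1p_2}=(b^{p_2})^{q_1}=(c^{q_2})^{q_1}=c^{q_1q_2}$, so $a$ and $c$ have a common nonzero power, positive if $a$ is noninvertible. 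In the noninvertible case this is exactly the hypothesis of Lemma~\ref{L.>cm}, so $a$ and $c$ commute. In the invertible case the subgroup $H=\langle a,c\rangle$ of the unit group of $M$ is a $\!2\!$-generator subgroup, hence free by Theorem~\ref{T.n-gen_sgp}(ii); writing $a^{P}=c^{Q}$ in $H$ (nonidentity since $M$ is torsion-free), the identity $c\,a^{P}=c^{Q+1}=a^{P}c$ puts $c$ in the centralizer of $a^{P}$, which in a free group is cyclic and contains $a$, so $a$ and $c$ commute. Reflexivity and symmetry being clear, mutual commutativity is an equivalence relation on the nonidentity elements of $M$.

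It remains to describe an equivalence class $E$. Being a set of pairwise commuting elements, $E$ is contained, by Proposition~\ref{P.cmtgA}, in a submonoid $N$ which (using that $M$ is torsion-free) is a directed union of infinite cyclic subgroups when the elements of $E$ are invertible, and of infinite cyclic submonoids when they are noninvertible. Every nonidentity element $x$ of $N$ lies in one of these infinite cyclic pieces $C$; since $C$ arises from a nonempty finite subset of $E$ it meets $E$, and being cyclic it is commutative, so $x$ commutes with some member of $E$ and hence $x\in E$. Therefore $N\setminus\{1\}\subseteq E$; combined with $E\subseteq N$ and $1\notin E$, this gives $N=E\cup\{1\}$. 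Reading off from Proposition~\ref{P.cmtgA} the structure of $N$ now yields the two remaining assertions: in the invertible case $E\cup\{1\}$ is a locally infinite-cyclic subgroup of the unit group, equivalently a group embeddable in $\Q$; in the noninvertible case it is a locally infinite-cyclic submonoid, equivalently one isomorphic to the monoid of nonnegative elements in a subgroup of $\Q$.

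The main obstacle is the transitivity step, and within it the invertible case: one has to see that ``common nonzero power'' is precisely what lets one run the centralizer-in-a-free-group argument, the freeness coming from Theorem~\ref{T.n-gen_sgp}(ii) because $DM$ is a $\!2\!$-fir. The noninvertible case of transitivity is handed to us by Lemma~\ref{L.>cm}, and the passage from the equivalence relation to the structural description is routine once one notices that the submonoid of Proposition~\ref{P.cmtgA} is exactly $E\cup\{1\}$.
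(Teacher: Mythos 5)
Your proof is correct and follows essentially the same route as the paper's: the invertible/noninvertible dichotomy from the structure of commuting pairs, transitivity via a common nonzero power handled by Lemma~\ref{L.>cm} in the noninvertible case and by freeness of the $\!2\!$-generator subgroup (Theorem~\ref{T.n-gen_sgp}(ii), then cyclicity forced by a nontrivial central/centralized element) in the invertible case, and the class structure read off from Proposition~\ref{P.cmtgA}. The only cosmetic difference is that you phrase the free-group step via centralizers being cyclic rather than via a free group with nontrivial center being cyclic, which is the same fact.
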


\begin{proof}
We know from the proof of Proposition~\ref{P.cmtgA}
that of two commuting nonidentity elements of $M,$ either both
or neither are invertible.
To show that commutativity is an equivalence relation on such
elements, suppose $a$ commutes with $b,$ and $b$ with $c.$
By Proposition~\ref{P.cmtgA}, $a$ and $b$ will lie in
a common cyclic subgroup (if $b$ is invertible)
or submonoid (otherwise), and likewise for $b$ and $c.$
Hence each of $a$ and $c$ has a nonzero power (positive in
the case where $b$ is noninvertible) which equals a power of $b.$
Hence raising these powers to further powers, we have
$a^m=c^n$ for some nonzero $m$ and $n.$
If $b$ is noninvertible, Lemma~\ref{L.>cm}
now tells us that $a$ and $c$ commute.
In the invertible case, Theorem~\ref{T.n-gen_sgp}(ii) shows that
the subgroup of $M$ generated by $a$ and $c$ is free, so, as it has
a nontrivial central
element $a^m=c^n,$ it must be cyclic, so $a$ and $c$ again commute.

Applying Proposition~\ref{P.cmtgA} to any equivalence class $A$
of noninvertible elements
under the commutativity relation, we see that it will be
contained in a commutative subgroup or submonoid $N$ of the
asserted form; and since $N$ is itself commutative, we must
have $N=A\cup\{1\}.$
\end{proof}

We remark that for $M$ a monoid such that $DM$ is a {\em semifir}, the
fact that commutativity is an equivalence relation on $M$
follows from conditions~\eqref{d.G_l_f} and~\eqref{d.M<G}, while for
$M$ a {\em group} such that $DM$ a $\!2\!$-fir,
the same fact follows from Dicks and Menal's result that
every $\!2\!$-generator subgroup of $M$ is free.
But when $M$ is a monoid, the above result gives
a new necessary condition for $DM$ to be a $\!2\!$-fir.

\section{Miscellaneous observations}\label{S.C_etc}

\subsection{Two more questions}\label{SS.D}
We have mentioned that necessary and sufficient conditions are
known for a monoid ring $DM$ to be a right or left fir, and, when $M$
is a group, for $DM$ to be a semifir.
In each of these conditions, if $M$ is nontrivial then $D$ must be
a division ring, but aside from that, the conditions
are restrictions on $M$ alone.
It seems likely that for a general monoid $M$
the same will be true of the condition
for $DM$ to be a semifir, or more generally, an $\!n\!$-fir.
But we don't know this, so we pose it as

\begin{question}\label{Q.same_D}
Let $n$ be a positive integer.
If $M$ is a monoid such that $DM$ is an $\!n\!$-fir
for some division ring $D,$ will
$D'M$ be an $\!n\!$-fir for every division ring $D'$?
\end{question}

Monoid rings can be generalized to ``skew'' monoid rings; so we ask

\begin{question}\label{Q.twist}
Let $M$ be a monoid, $D$ a division ring, and $\alpha:M\to\r{Aut}(D)$
a homomorphism, with the image under $\alpha$ of $x\in M$
denoted $\alpha_x.$
Let $D[M;\alpha]$ denote the ``skew'' monoid ring
consisting of all left-linear expressions
$\sum_{x\in M} c_x x$ $(c_x\in D,$ zero for almost all $x),$ with
multiplication and addition defined among elements of $D$ as in $D,$
and multiplication defined among elements of $M$ as in $M,$
while in place of the mutual commutativity of $D$ and $M$ assumed
in ordinary monoid rings $DM,$ one uses the relations
\begin{equation}\begin{minipage}[c]{35pc}\label{d.twist}
$x\,c\ =\ \alpha_x(c)\,x\quad (x\in M,\,c\in D).$
\end{minipage}\end{equation}

Is it true that $D[M;\alpha]$ will be a left fir, a right
fir, a semifir, etc., if and only if the ordinary
monoid ring $DM$ has the same property?
\end{question}

There are still other variants of the monoid ring construction.
For instance, we can get new multiplications~$*$
on $DM$ by defining, for each $x,\,y\in M,$
$x*y= c_{x,y}\,x y,$ where $(c_{x,y})_{x,y\in M}$
are nonzero elements of $D$ satisfying the
relations required to make this multiplication associative.
Rings constructed using such $c_{x,y},$ and possibly
also a map $\alpha:M\to\r{Aut}(D)$ as above, are called
``crossed products'' of $D$ and $M;$ and
one can ask when these are $\!n\!$-firs.

\subsection{Some properties of our properties}\label{SS.prop_prop}

If $R$ is a ring and $n$ a positive integer,
then the condition that $R$ be an $\!n\!$-fir is what logicians
call a {\em first order} property.
This means, in our context, that it is equivalent to
the conjunction of a (possibly
infinite) set of conditions, each of which is expressible using
ring-theoretic equations, logical connectives, and quantifications
over elements (but not quantifications over subsets).
It is not hard to see that the condition that every $\!m\!$-term
linear relation be trivializable may be written in this form.
(The formulation of the $\!n\!$-fir condition that we began
with, saying that every left ideal of $R$ generated by $\leq n$
elements is free, and
free modules of ranks $\leq n$ have unique ranks, takes a
little more thought, but is also not hard to express in such a form.)
Gathering together, for all $n,$ these first-order statements defining
$\!n\!$-firs, we see that
the property of being a semifir is also first-order.

Hence it is curious that the condition on a monoid $M,$ that all
monoid rings $DM$ be $\!n\!$-firs or semifirs (or, to avoid
worrying about whether this condition depends on $D,$
that the monoid ring $DM$ be an $\!n\!$-fir or semifir for
a fixed $D,$ e.g., the field of rational numbers), is {\em not}
first-order.
For instance, the additive group of $\Z,$ and a
nonprincipal ultraproduct of countably many copies
of that group, are {\em elementarily equivalent},
that is, they satisfy the same first-order conditions;
but such an ultraproduct group will contain two linearly independent
elements, and hence fail to satisfy~\eqref{d.G_l_f}.

Here is another observation of the same sort,
though not as close to the main topic of this note:
Though the class of semifirs is first-order, we shall see
that the class of rings in which all finitely generated
left ideals are free, but not necessarily of unique rank, is not.

This looks like a justification for considering the class of semifirs
``nicer'' than the wider class.
While I personally prefer the study
of semifirs, the above observation is not
a justification for that preference; it simply reflects the fact
that the property that all finitely generated left ideals be free
comprises too complicated a family of cases to be first order.
If one specifies for {\em which} pairs of positive
integers $m$ and $n$ one wants to have $R^m\cong R^n$
(which corresponds to specifying a congruence on the additive semigroup
of positive integers), and looks at
the class of rings $R$ over which
those and only those isomorphisms hold,
and all finitely generated left ideals are free,
the resulting class is first order.
The non-first-order class referred to above
is the union of this countably infinite family of first-order classes.
Each of these classes is, incidentally, nonempty
\cite{PMC.depII}, \cite[Theorem~6.1]{cPu}.

To see that, as claimed, this union is not first order, suppose we take,
for each $d>1,$ a ring $R_d$ whose finitely
generated left ideals are free, and such that $R_d^m\cong R_d^n$
for positive integers $m$ and $n$ if and only if $d\,|\,m-n.$
Then for each $d,$ the free $\!R_d\!$-module of rank $1$
has a decomposition $R_d\cong R_d\oplus R_d^d.$
It is easy to deduce that over
a nonprincipal ultraproduct $R$ of the $R_d,$ we get an
$\!R\!$-module decomposition $R\cong R\oplus P,$ where $P$ is a
direct summand in $R$ which has
free modules of all positive ranks as direct summands.
But $P$ cannot itself be free, since if it were, then
being finitely generated, it would have to be free
of some finite rank $n,$ implying an isomorphism
$R\cong R^{n+1};$ but among the $R_d,$ this isomorphism only holds for
finitely many of those rings, namely those for which $d\,|\,n,$
hence it does not hold in the ultraproduct $R.$

Let us also note that though the class of semifirs is
closed under coproducts with amalgamation of a common division
ring \cite{PMC.cP}, \cite[p.\,201]{HB}, \cite[Corollary~2.13]{cP},
the class of groups $G$ such that $DG$ is a semifir
is not closed under coproducts with amalgamation of a common subgroup.
A counterexample is the group~\eqref{d.x^2=y^2} above, a
coproduct of two infinite cyclic
groups with amalgamation of a subgroup of index two in each.

\subsection{On condition~\eqref{d.ab=ca>}, and possible variants}\label{SS.var_ab=ca}
Condition~\eqref{d.ab=ca>} on a monoid $M$ says that
relations $a\,b=c\,a$ (with $b$ and $c$ not both $1)$ in $M$
can only arise in a ``generic'' way, namely, by taking
elements $e,\,f\in M,$ and a natural number $n,$ and
letting $a=(ef)^n e,$ $b=fe,$ and $c=ef.$

Yet one can clearly also get such relations, more generally,
by taking elements $e$ and $f$ and natural numbers $m$ and $n$ with
$m>0,$ and letting
\begin{equation}\begin{minipage}[c]{35pc}\label{d.e,f,m,n}
$a=(ef)^n e,$\quad $b=(fe)^m,$\quad and\quad $c=(ef)^m.$
\end{minipage}\end{equation}

The solution to this paradox is that~\eqref{d.e,f,m,n}
can be rewritten in the form described in~\eqref{d.ab=ca>}.
Namely, writing $n=qm+r$ with $0\leq r< m,$
and setting $\bar{e}=(ef)^r e$ and
$\bar{f}=f (ef)^{m-r-1},$ we find that
$a=(\bar{e}\bar{f})^q\,\bar{e},$
$b=\bar{f}\bar{e},$ and $c=\bar{e}\bar{f},$ as desired.

Now suppose that for some $a\in M$ we have a {\em pair} of relations,
\begin{equation}\begin{minipage}[c]{35pc}\label{d.ab=ca&}
$a\,b\ =\ c\,a,$\qquad $a\,b'\ =\ c'\,a,$
\end{minipage}\end{equation}
with $b,$ $c,$ $b',$ $c'\in M.$
I see several ``generic'' patterns for how families
of elements satisfying these equations can arise.
In writing the first of these, I will, for
visual simplicity, use $[s]^{-1}$ to denote {\em deletion} of
a string $s$ from the beginning or end of an expression.
The first pattern is then
\begin{equation}\begin{minipage}[c]{35pc}\label{d.efg}
$a=((ef)^m e g)^n (ef)^m e,\quad
b=g (ef)^m e,\quad
c=(ef)^m e g,\quad
b'=[efe]^{-1} a,\quad
c'=a[efe]^{-1},$\\
where $m\geq 1,$ $n\geq 0.$
\end{minipage}\end{equation}

This pattern has a dual, in which the formulas given above for
$b$ and $c$ are instead used for $b'$ and $c',$ and vice versa.
The reader can easily write this down.

The next pattern (which has some intersection with what
we get on taking $g=f$ in~\eqref{d.efg}), is
\begin{equation}\begin{minipage}[c]{35pc}\label{d.all_ef}
$a=(ef)^n\,e,$\quad
$b=(fe)^p,$\quad
$c=(ef)^p,$\quad
$b'=(fe)^q,$\quad
$c'=(ef)^q,$\\
where $p$ and $q$ are relatively prime,
and $0\leq p\leq n$ and $0\leq q\leq n.$
\end{minipage}\end{equation}
(We assume $p$ and $q$ relatively prime because the case where
they have a common divisor can be reduced to~\eqref{d.all_ef}
by a substitution like the one we described
following~\eqref{d.e,f,m,n} above.
We have also excluded the case where $p>n,$ because in such
cases, $a$ right-divides $b,$ and these fall under~\eqref{d.disj} below;
and the case where $q>n$ for the corresponding reason.)

Finally, we have the following pattern,
and the variant we get by interchanging the
roles of $b,\ c$ with those of $b',\ c'.$
\begin{equation}\begin{minipage}[c]{35pc}\label{d.disj}
$a=(ef)^n\,e,$\quad
$b=g\,a,$\quad
$c=a\,g,$\quad
$b'=fe,$\quad
$c'=ef.$
\end{minipage}\end{equation}
(One can also write down a pattern in which
both $b$ and $b'$ are left multiples of $a.$
But this reduces to the $n=0$ case of~\eqref{d.disj}.)

\begin{question}\label{Q.patterns}
\textup{(i)}
If $M$ is a monoid such that $DM$ is a $\!2\!$-fir,
is it true that for all $a,$ $b,$ $b',$ $c,$ $c'\in M$
satisfying~\eqref{d.ab=ca&} with $b,$ $b',$ $c,$ $c'\neq 1,$
either~\eqref{d.efg}, or~\eqref{d.all_ef},
or~\eqref{d.disj}, or the dual of~\eqref{d.efg}
or~\eqref{d.disj}, must hold for some $e,$ $f,$ etc.\ in $M$
and natural numbers $m,$ $n,$ etc.?

\textup{(ii)} If the answer to \textup{(i)} is no, does there
exist a finite list of decompositions which {\em will} always
yield the given relations?

\textup{(iii)} If \textup{(i)} or \textup{(ii)} has a positive
answer, i.e., if the existence of decompositions of the
relevant sort whenever~\eqref{d.ab=ca&} holds is a necessary
condition for $DM$ to be a $\!2\!$-fir, is this condition
implied by~\eqref{d.cancel}-\eqref{d.ab=ca>}
\textup{(}as, for instance,~\eqref{d.a1a2>} is\textup{)},
or does adding it to that list give a
stronger set of conditions on a monoid~$M$?

\textup{(iv)} Does the answer to any of the above questions
change if one strengthens the condition that $DM$ be a
$\!2\!$-fir to say that it is an $\!n\!$-fir, for some $n>2$?
\end{question}

To study the consequences of~\eqref{d.ab=ca&} in a
$\!2\!$-fir, one might use the $\!2\!$-term
linear relation analogous to~\eqref{d.a(b-1)},
\begin{equation}\begin{minipage}[c]{35pc}\label{d.a(b+b'-1)}
$a\,(b+b'-1)-(c+c'-1)\,a\ =\ 0.$
\end{minipage}\end{equation}

Instead of fixing an element $a,$ and using
more than one pair $b,$ $c,$ as in~\eqref{d.ab=ca&},
one might do the opposite, and study pairs of relations,
\begin{equation}\begin{minipage}[c]{35pc}\label{d.ab=ca&&}
$a\,b=c\,a,$\qquad $a'\,b=c\,a',$
\end{minipage}\end{equation}
in the same spirit.

Another family of ``generic'' relations, this time ring-theoretic rather
than monoid-theoretic, is the list of $\!2\!$-term linear relations
holding in any ring containing ring-elements $x,\,y,\,z,\,\dots$:
\begin{equation}\begin{minipage}[c]{35pc}\label{d.leapfrog}
\begin{center}
$1\cdot x=x\cdot 1,$\\
$x\cdot (yx+1)=(xy+1)\cdot x,$\\
$(xy+1)\cdot(zyx+z+x)=(xyz+x+z)\cdot(yx+1),$\\
$(xyz+x+z)\cdot(wzyx+wz+wx+yx+1)=(xyzw+xy+xw+zw+1)\cdot(zyx+z+x),$\\
$.\ \ .\ \ .$
\end{center}
\vspace{.2em}
\end{minipage}\end{equation}
These are defined and studied in \cite[\S2.7]{FRR+},
and it is shown in \cite[Proposition~2.7.3]{FRR+} that in an important
class of $\!2\!$-firs, they give all $\!2\!$-term
linear relations whose pair of left factors has no common
left divisor and whose pair of right factors has no common
right divisor, equivalently, which can be
trivialized to $1\cdot 0=0\cdot 1.$

By examining the monoid relations holding among the summands
in these identities,
we can write down families of relations in a monoid, which might
(who knows?)\ be particularly useful in finding conditions for
monoid rings $DM$ to be $\!2\!$-firs.
For example, looking at the second of the above identities,
$x\cdot (yx+1)=(xy+1)\cdot x,$ we see that if we
give $x,$ $yx$ and $xy$ the names $a,$ $b$ and $c$ respectively,
then that ring-theoretic equation follows from the monoid
relation $a\,b=c\,a$ that these elements satisfy, which is the subject
of~\eqref{d.ab=ca>}.
If we examine the monoid-theoretic relations corresponding
in the same way to the next identity of~\eqref{d.leapfrog}, we get,
with more work, the same single equation.
But moving to the fourth line of~\eqref{d.leapfrog},
we find, among the monomials in the factors comprising that
linear relation, eight that
are not products of any others, namely
$x,$ $z,$ $wz,$ $wx,$ $yx,$ $xy,$ $xw$ and $zw;$
and calling these $a,$ $b,$ $c,$ $d,$ $e,$ $f,$ $g$ and $h,$
we find that the fifteen monoid relations among
these elements that underlie that
ring identity reduce, after tautologies and
repetitions are dropped, to five:
\begin{equation}\begin{minipage}[c]{35pc}\label{d.five}
$ac=gb,$\quad $ad=ga,$\quad $ae=fa,$\quad $bc=hb,$\quad $bd=ha.$
\end{minipage}\end{equation}
So it might be of interest to search for results to the effect
that if this family of relations holds in a monoid $M$
such that $DM$ is a $\!2\!$-fir, then it can be achieved in
some ``generic'' way; and, if this approach turns out to be productive,
the identities further down on the list beginning
with~\eqref{d.leapfrog} might be investigated similarly.

What about conditions for monoid rings to be $\!n\!$-firs for
higher values of $n$?
For $n>2,$ the process of trivializing an $\!n\!$-term relation
seems to have a much more fluid character than for $n=2.$
E.g., compare the simplicity of the Euclidean algorithm for
pairs of real numbers with the complexity, and lack of a single
natural choice, among the algorithms for larger families
of real numbers discussed in~\cite{F+F}.
Nonetheless, one may hope for further
results on the conditions for the $\!n\!$-fir property to hold.

\subsection{An observation on $\sigma^{1/2}$}\label{SS.sigma^1/2}
I claim that the fact that the automorphism $\sigma^{1/2}$
of the free group on $x$ and $y$ introduced
in Lemma~\ref{L.sigma^1/2} sends
$z=x^{-1}y^{-1}xy$ to $z^{-1}$ is (loosely) related to the fact that
$\tau=(1+\sqrt 5)/2$ has algebraic norm $-1$ over $\Q.$
Precisely, let me sketch an argument using the latter
fact which shows that in $G/[G,G'],$ the image of $\sigma^{1/2}(z)$
must be the inverse of the image of $z.$

It is not hard to show that
the commutator operation on any group $G$ induces an
alternating bilinear map $G/G'\times G/G'\to G'/[G,G'],$
whose image generates the latter group.
Hence for $G$ the free group of rank $2,$ which has
$G/G'\cong\Z\times\Z$ and $G'/[G,G']\cong\Z,$ any endomorphism $\alpha$
of $G$ must induce on $G'/[G,G']$ the operation of exponentiation by
the determinant of the map that $\alpha$ induces on $G/G'.$
Now we have seen that the automorphism of $G/G'$ induced
by $\sigma^{1/2}$ acts
as does multiplication by $\tau$ on $\Z+\tau\Z\subseteq\R;$
and the determinant of that automorphism of free
abelian groups is the norm of $\tau,$ namely $-1.$
So since $z\in G',$ the image of $z$ in $G'/[G,G']$ must
indeed be sent to its inverse.

For some observations on the limiting behavior of the orbit
of $x$ under $\sigma^{1/2}$ under group topologies
on $G,$ see~\cite[\S7]{group_top}.

\section{Acknowledgements}\label{S.ackn}
I am grateful to Warren Dicks and Ferran Ced\'{o} for
some very helpful correspondence about this material, and
to the referee for some suggestions and corrections which are
followed in the version to appear in {\em Semigroup Forum}.

\section{Typographical note}\label{S.lang_rang}
The angle brackets used
in this note, $\lang\ \rang,$ are not the standard {\TeX}
symbols $\langle\ \rangle.$
For the {\TeX} code for the symbols used here, and
my thoughts on the subject, see~\cite{lang_rang}.
(However, in this note, since the symbols enclosed in
angle brackets are almost all lower-case, I have added
\texttt{\symbol{92}kern.16em} to the definition of
\texttt{\symbol{92}lang}
shown in \cite{lang_rang}, so as to get, for example,
$\lang x\rang$ rather than $\lang\kern-.16em x\rang.)$

\end{document}